\documentclass[reqno]{amsart}
\usepackage[english]{babel}

\usepackage[T1]{fontenc}
\usepackage[utf8]{inputenc}
\usepackage[margin=1in]{geometry}
\usepackage{amsmath,amssymb,amsthm,amsfonts,mathtools}
\usepackage[hidelinks]{hyperref}
\usepackage{enumitem}
\usepackage{mathrsfs}
\usepackage{tikz-cd}
\usepackage{stmaryrd}
\usepackage{microtype}
\usepackage{csquotes}
\usepackage{comment}

\newtheorem{theorem}{Theorem}[section]
\newtheorem{proposition}[theorem]{Proposition}
\newtheorem{lemma}[theorem]{Lemma}
\newtheorem{corollary}[theorem]{Corollary}
\theoremstyle{definition}
\newtheorem{definition}[theorem]{Definition}
\newtheorem{remark}[theorem]{Remark}

\newtheorem{claim}[theorem]{Claim}
\newtheorem{ltheorem}{Theorem}

\newcommand{\weak}{\overset{*}{\rightharpoonup} }

\newcommand{\emp}{\mathsf e}
\newcommand{\Leb}{\operatorname{Leb}}
\newcommand{\Prob}{\mathcal{P}}

\newcommand{\acc}{\operatorname{acc}}

\newcommand{\supp}{\operatorname{supp}}

\newcommand{\N}{\mathbb{N}}
\newcommand{\Sone}{\mathbb{S}^1}
\newcommand{\eps}{\varepsilon}
\newcommand{\wto}{\overset{*}{\rightharpoonup}}
\newcommand{\Wone}{W_1}
\newcommand{\cM}{\mathcal{M}}
\newcommand{\cG}{\mathcal{G}}

\title{Emergence of partially hyperbolic skew-products with circle fibers}

\author[O. Costa]{Odylo Costa}
\address{Departamento de Matem\'atica, Universidade Federal do Cear\'a (UFC), Campus do Pici,
Bloco 914, CEP 60455-760. Fortaleza -- CE, Brasil}
\email{odylo.costa@mat.ufc.br}

\author[M. Poletti]{Mauricio Poletti}
\address{Departamento de Matem\'atica, Universidade Federal do Cear\'a (UFC), Campus do Pici,
Bloco 914, CEP 60455-760. Fortaleza -- CE, Brasil}
\email{mpoletti@mat.ufc.br}

\begin{document}

\begin{abstract}
    We study the statistical behavior of partially hyperbolic skew-products over Axiom A diffeomorphisms using emergence. Generically, these systems satisfy the Palis conjecture: they have a finite number of physical measures whose basins together have full volume. In particular, for Lebesgue almost every point, the empirical measures converge to one of finitely many physical measures. We study the non-generic case and prove that, even when the sequence of empirical measures may have uncountably many accumulation points, these systems have low emergence.
\end{abstract}

    \maketitle
    
\section{Introduction}
   Chaotic dynamical systems exhibit complex and unpredictable behavior. Despite being governed by deterministic laws, their sensitive dependence on initial conditions limits the long-term prediction of individual trajectories. For this reason, we study their statistical behavior.

By statistical behavior, we mean the behavior of averages of observables $\varphi: M\to\mathbb{R}$ along the orbits of a dynamical system $F: M\to M$. More specifically, given a point $x\in M$, we consider
$
\frac{1}{n}\sum_{j=0}^{n-1}\varphi(F^j(x))
$
and seek to understand its behavior as $n$ tends to infinity.

To study these averages, we use empirical measures. Given $x\in M$, define
\[
\emp_n(x):=\frac{1}{n}\sum_{j=0}^{n-1}\delta_{F^j(x)},
\]
where $\delta_p$ is the Dirac measure concentrated at $p\in M$. The measures $\emp_n(x)$ are called \emph{empirical measures}, and we study their accumulation points in the weak-$*$ topology. Observe that, for every continuous observable $\varphi:M\to\mathbb{R}$,
$
\frac{1}{n}\sum_{j=0}^{n-1}\varphi(F^j(x))
=\int\varphi\,d\emp_n(x).
$
Thus, understanding the limiting behavior of these averages for all continuous observables is equivalent to understanding the accumulation points of $\emp_n(x)$.

We are interested in the empirical measures of Lebesgue almost every point. Given an invariant probability measure $\mu$, we define its \emph{basin} by
$
B(\mu):=\{x\in M:\emp_n(x)\weak\mu\},
$
where $\weak$ denotes convergence in the weak-$*$ topology. A measure $\mu$ is called \emph{physical} if $B(\mu)$ has positive Lebesgue measure.

Palis conjectured \cite{Pa00} that generic diffeomorphisms should have finitely many physical measures whose basins together have full Lebesgue measure. This holds for hyperbolic diffeomorphisms satisfying Axiom A; see \cite{Si72,Rue76,Bowen1975}.

   The existence of finitely many physical measures whose basins together have full Lebesgue measure has also been established for several classes of partially hyperbolic diffeomorphisms, including systems with mostly contracting center \cite{BV00}, systems satisfying suitable nonuniform expansion conditions along the center-unstable direction \cite{ABV00}, and systems with mostly expanding center in the sense of Gibbs $u$-states \cite{AV18}.
   
   We will focus on a particular class of partially hyperbolic diffeomorphisms, namely partially hyperbolic skew-products with circle fibers over Axiom A diffeomorphisms; see section~\ref{sec.def} for the definition. It was proved in \cite{VianaYang2013} that this class satisfies the Palis conjecture: $C^r$ generically, it has a finite number of physical measures whose basins together have full Lebesgue measure. In this work, we want to understand the non-generic case. 
   
   \emph{How big can the set of accumulation points of empirical measures of Lebesgue almost every point be?} To measure how complex this set can be, we use the concept of emergence introduced by Berger~\cite{BergerSteklov2017}. The emergence of a system $F$ is a function $\mathcal{E}_{\Leb}(F)$ which associates, to each positive real number $\eps>0$, the number of probability measures needed to approximate the asymptotic behavior of $\emp_n(x)$ up to (average) error $\eps$. A precise definition of $\mathcal{E}_{\Leb}(F)$ is given in Section~\ref{sec.def}.

   Observe that when $\eps\to 0^+$, different things can happen for different systems. If $F$ is an Axiom A diffeomorphism, for example, then $\mathcal{E}_{\Leb}(F)$ is uniformly bounded by the number of attractors for every $\eps>0$; if $F=\mathrm{Id}$ in $\Sone$ then, as $\eps\to 0^+$, this number grows to $+\infty$ as $\eps^{-1}$. In the last decade, some systems with emergence of superpolynomial growth were discovered (see, for example, \cite{BergerBochiAIM2021},\cite{berger2026analytic},\cite{delaporte2025analytic}). We say that $F$ has \emph{low emergence} if its emergence has at most polynomial growth, i.e., if there exists $\alpha\geq0$ such that $\mathcal E_{\Leb}(F)(\eps)=O(\eps^{-\alpha})$.
   
    Recently, Costa and Santiago~\cite{CostaSantiago2026} showed that, in the absence of critical points, smooth one-dimensional systems have at most polynomial emergence. Thus, one could ask whether partially hyperbolic systems with one-dimensional center dynamics have low emergence. In this paper, we show that for partially hyperbolic skew-products, the statistical behavior is, in some sense, tame. Our main result is the following.
    
    \begin{ltheorem}\label{thm:low-emergence}
        Every $C^{1+}$ partially hyperbolic skew-product with circle fibers over an Axiom A diffeomorphism has low emergence.
    \end{ltheorem}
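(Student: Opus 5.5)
Our plan is to reduce the problem to the non-generic alternative in the dichotomy underlying \cite{VianaYang2013}. Let $F(x,t)=(f(x),g_x(t))$ with $f$ Axiom A and the fibers circles. By the spectral decomposition, Lebesgue almost every point lies in the stable set of one of finitely many hyperbolic attractors $\Lambda_1,\dots,\Lambda_k$ of $f$. Emergence is subadditive under finite decompositions: if $M=\bigcup_i B_i$ up to measure zero, then $\mathcal E_{\Leb}(F)(\eps)\le\sum_i\mathcal E_{\Leb|B_i}(F)(\eps)$. It therefore suffices to work over one attractor $\Lambda$, and we may assume $f|_\Lambda$ is mixing by passing to a power. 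Bounding the emergence of $F^m$ bounds that of $F$ up to a constant, since $\emp_{nm}(x)$ for $F$ is the average of $m$ pushforwards of the $F^m$-empirical measures. Using the $su$-saturation of Lebesgue-typical behavior, we then reduce to studying the fiber dynamics along the unstable manifolds of $\Lambda$. This is the point where the Gibbs $u$-state theory and the SRB measure $\mu_\Lambda$ of $f$ enter.

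The core step is the center Lyapunov exponent dichotomy. Consider the Gibbs $u$-states of $F$ over $\Lambda$.
\begin{enumerate}[label=(\roman*)]
\item If some ergodic $u$-state has nonzero center exponent, \cite{VianaYang2013} (via \cite{BV00,ABV00,AV18}-type arguments) gives finitely many physical measures whose basins cover Lebesgue almost all of the region. In that case the emergence there is bounded.
\item Otherwise all $u$-states have zero center exponent. By the invariance principle (Avila--Viana), each such measure admits an $su$-invariant disintegration along the circle fibers. In the rigid subcase the holonomies are continuous on the whole fiber bundle. The cocycle $x\mapsto g_x$ is then conjugate, by a continuous family of homeomorphisms $h_x$, to a cocycle of rotations, or of maps preserving a finite structure.
\end{enumerate}

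In subcase (ii) we write $F$ as conjugate to $(x,t)\mapsto(f(x),t+\rho(x))$, possibly after a finite cover. This is a compact group extension of $f|_\Lambda$ by a closed subgroup of $\Sone$. For such extensions, Lebesgue-typical empirical measures converge to the ergodic components of $\mu_\Lambda\times\Leb_{\text{fiber}}$ under the group translations. There are two possibilities. If the subgroup is finite, there are finitely many components. If the rotation cocycle is cohomologous to a constant $\theta$ with $\theta$ rational, the components are indexed by a circle parameter $s\in\Sone$: the measure $\mu_s$ is supported on the graph $\{t=\psi(x)+s\}$ of a Hölder transfer function. This is exactly the source of uncountably many accumulation points. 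The key estimate is that $s\mapsto\mu_s$ is Lipschitz, or Hölder, in the Wasserstein metric $\Wone$. This holds because $h_x$ and $\psi$ are Hölder, given $C^{1+}$ regularity and Livšic-type regularity. Covering $\Sone$ by $O(\eps^{-1/\beta})$ arcs of the parameter then gives $\mathcal E_{\Leb}(F)(\eps)=O(\eps^{-1/\beta})$, which is polynomial.

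I expect the main obstacle to be the intermediate cases. One is when the zero exponent case does not globally conjugate to rotations, for instance when the rotation cocycle is cohomologous to an irrational constant on some factor. Another is when only some $u$-states have zero exponent while others have nonzero exponent. For the irrational case, I would argue via unique ergodicity of the extension: the rotation number is irrational, so the extension is mixing-type, and the resulting physical measure is unique. For the mixed case I would first try a finer decomposition of the fiber into finitely many invariant annular regions bounded by $su$-invariant continuous sections. On each region we are either in the hyperbolic case with finitely many physical measures, or in the rotation case handled above. The remaining technical point is the Hölder continuity of the conjugacy $h_x$ in $x$. Here I would use the $C^{1+}$ hypothesis, through bounded distortion along the fibers and Hölder continuity of the stable and unstable holonomies, to get a uniform Hölder exponent $\beta$. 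That exponent fixes the polynomial degree $\alpha=1/\beta$ in the emergence bound.
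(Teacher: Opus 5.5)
Your preliminary reductions (splitting over attractors, passing to a power of $F$) are harmless. The dichotomy you build on, however, is false, and with it the overall strategy.

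\textbf{Step (i) fails.} One ergodic $u$-state with nonzero center exponent does not give finitely many physical measures whose basins cover a full-measure set; Viana--Yang's conclusion is a \emph{generic} statement. Take $F(x,t)=(f(x),g(t))$ with $g$ a $C^\infty$ circle diffeomorphism $C^1$-close to the identity, equal to the identity on an arc $A$, and with an attracting fixed point $t_*\notin A$.
\begin{itemize}
\item $m\times\delta_{t_*}$ is physical with negative exponent.
\item For Lebesgue-a.e.\ $(x,t)\in B(m)\times A$ the empirical measures converge to $m\times\delta_t$. So there are uncountably many limits, and the emergence is of order $\eps^{-1}$, not bounded.
\end{itemize}
Likewise, $g$ may have countably many attracting fixed points accumulating on a neutral one, so $\cG_{<0}$ can be infinite.

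\textbf{Step (ii) is unjustified.} The invariance principle gives $su$-invariance only of the conditional measures of the $u$-Gibbs measures themselves. It does not give a conjugacy of the whole fiber cocycle to rotations. For example, let $g(t)=t+\epsilon\phi(t)$ with $\phi\geq0$ smooth and vanishing exactly on a Cantor set $C$.
\begin{itemize}
\item Every ergodic $u$-state is $m\times\delta_c$ with $c\in C$, and all have zero exponent.
\item Yet $F$ is not conjugate to a compact group extension.
\item The neutral sections cut the fiber into infinitely many annular regions, so the finite decomposition you propose for the mixed case does not exist.
\end{itemize}
The ``intermediate cases'' you postpone are in fact the general case.

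\textbf{First missing ingredient: Furstenberg's classification.} All ergodic measures projecting to $m$, whatever their exponents, are either a single measure or measures of $(p,q)$-graphs with the same $N=pq$. These graphs are pairwise disjoint and interlaced: each gap of one contains exactly one point of any other. Inside each gap of a reference graph, couple the point of each graph to the point of the next graph in the cyclic order. This shows that any cyclically ordered finite family has total $W_1$-length at most $\ell(\Sone)/N$. Hence the covering numbers are at most $1+L/\eps$, with \emph{no} regularity needed.

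Your Hölder exponent $\beta$ is beside the point. Livšic theory controls regularity in $x$, whereas $W_1(\mu_s,\mu_{s'})$ depends on the modulus of continuity of the conjugacy in the fiber variable, which you do not control. Monotonicity of the ordered graphs makes that modulus irrelevant anyway.

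\textbf{Second missing ingredient: non-ergodic accumulation points.} You never control these. A priori, Lebesgue-typical empirical measures need not converge. Knowing only that accumulation points have ergodic decomposition supported on $\cG_{\geq0}$ allows all barycenters, for instance all $m\times\rho$ with $\rho\in\Prob(A)$ in the example above. That set has superpolynomial $W_1$-covering numbers.

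The paper handles this in Lemma~\ref{lem:empirical-segment}. For Lebesgue-a.e.\ point not attracted to a measure of $\cG_{<0}$, every accumulation point lies on the segment $[\mu^-,\mu^+]$. Here $\mu^\pm$ are the measures of the two $su$-invariant continuous graphs bounding the gap that contains the point. The paper then shows that all such segments, together with $\cG_{\geq0}$, lie on a single curve $\Gamma(\widehat I)$ of finite $W_1$-length. Only then does the covering criterion give $O(\eps^{-1})$.
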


   To prove this result, we give a detailed description of the $u$-Gibbs measures of $F$, whose conditional measures along strong unstable manifolds are absolutely continuous with respect to leafwise Lebesgue measure. This analysis also yields results of independent interest, which we present in the following section.
    
We now introduce the main definitions and state the more technical results.

\subsection*{AI Declaration} 
We used ChatGPT (Open AI) to identify relevant references, improve grammar, and assist with checking proofs. All mathematical work was carried out by the authors.

\subsection*{Acknowledgment} 
O.C. was supported by Instituto Serrapilheira grant “Jangada Dinâmica: Impulsionando Sistemas Dinâmicos na Região Nordeste”
M.P. was supported by CAPES-Finance Code 001, Instituto Serrapilheira, grant number Serra-R-2211-41879 and FUNCAP, grant AJC 06/2022.

%----------------------------------------------------------------------
%----------------------------------------------------------------------
\section{Definitions and Statements}\label{sec.def}
%----------------------------------------------------------------------
%----------------------------------------------------------------------

    Let $M$ be a compact metric space, $F$ a continuous dynamical system on $M$, and $\Prob(M)$ be the space of probability measures on $M$, equipped with the $1$-Wasserstein distance $\Wone$. This metric induces the weak-$*$ topology and is defined by:
	$$
	\Wone(\mu,\nu)=\inf_{\pi \in \Pi(\mu,\nu)}\int_{M\times M} d(x,y) \ d\pi(x,y),
	$$
	where $d$ is the metric on $M$, $\Pi(\mu,\nu)\coloneqq \left\{\pi \in \mathcal{P}(M\times M) \mid (p_{1})_{\ast}\pi=\mu \text{ and }(p_{2})_{\ast}\pi=\nu\right\}$, and $p_{i}\colon M\times M \to M$ are the projections onto the first and second coordinates. The elements of $\Pi(\mu,\nu)$ are called couplings or transport plans between $\mu$ and $\nu$. See \cite{Villani2003Topics} for equivalent definitions.

    For a Borel probability measure $\eta$ on $M$, not necessarily invariant, define the emergence function of $F$ with respect to $\eta$ by
	\[
	\mathcal E_\eta(F)(\eps)
	:=
	\min\left\{
	N\in\N:
	\begin{array}{l}
	\exists\nu_1,\dots,\nu_N\in\Prob(M)\text{ such that}\\[1mm]
	\displaystyle
	\limsup_{n\to\infty}
	\int_M
	\min_{1\le i\le N}
	\Wone(\emp_n(z),\nu_i)
	\,d\eta(z)
	\le\eps
	\end{array}
	\right\}.
	\]

	Following \cite{BergerSteklov2017,BergerBochiAIM2021}, we say that a system has \emph{low emergence} if its emergence grows at most polynomially in $\eps^{-1}$, i.e., if
	\[
	\limsup_{\eps\to0^+}
	\frac{\log\mathcal E_\eta(F)(\eps)}
	{-\log\eps}<+\infty.
	\]

    From now on we will take $\eta$ to be the Lebesgue measure. 

    Emergence gives a way to classify systems by the complexity of their statistical behavior. Systems such as ergodic systems or Axiom A diffeomorphisms have uniformly bounded emergence, so $\mathcal{E}_{\Leb}(f)(\eps)=O(1)$. The identity and some systems with infinitely many sinks can have an unbounded emergence function, but it is still low in the above sense: for example, there exists $\alpha>0$ such that $\mathcal{E}_{\Leb}(f)(\eps)=O(\eps^{-\alpha})$ when $\eps\to 0$. If a system has emergence with superpolynomial growth, we say it has high emergence.
    
    As explained in the introduction, Theorem~\ref{thm:low-emergence} is about showing low emergence for systems with some form of hyperbolicity, namely the partially hyperbolic skew-products.

   Let $N$ be a smooth compact manifold and $f\colon N\to N$ be an Axiom A diffeomorphism. This means $\Omega(f)=\overline{\operatorname{Per}(f)}$ is a hyperbolic set. This means that there exist constants $C>0$ and $0<\lambda<1$ and a $Df$-invariant splitting for every $x\in \Omega(f)$ 
   $$
   T_x N=E^s_f(x)\oplus E^u_f(x),\quad \text{s.t. }  \|Df^n(x)\mid E^s_f\|\leq C\lambda^n\text{ and } \|Df^{-n}(x)\mid E^u_f\|\leq C\lambda^n\text{ for }n\geq 0.
   $$
   
Define the skew-product diffeomorphism 
	\[
	F\colon N\times\Sone
	\longrightarrow
	N\times\Sone,
	\qquad
	F(x,\theta)=(f(x),f_x(\theta)),
	\]
	where every $f_x$ is an orientation-preserving circle diffeomorphism. Also let $\pi:N\times \Sone\to N$, to be the projection $\pi(x,\theta)=x$. We endow $N\times\Sone$ with the product Riemannian metric and denote by $\Leb$ its normalized Riemannian volume.
    
	Assume that $F$ is $C^{1+\alpha}$. For $n\geq1$ write $f^n_{x}:=f_{f^{n-1}(x)}\circ \cdots \circ f_{x}$. Assume that there exists an integer $n\geq 1$ such that for every $x\in \Omega(f)$, $\theta\in \Sone$, we have
    $$ 2\|Df^n|_{E^s_{f}(x)}\| < |(f^n_x)'(\theta)|<\frac12 m(Df^n|_{E^u_{f}(x)}),$$ where $m(A)=\inf_{\|v\|=1}\|Av\|$ denotes the conorm of $A$. If one of the extremal bundles is trivial, the corresponding inequality is omitted in the definition.
    
    This implies that $F$ is partially hyperbolic over $\Omega(f)\times \Sone$ with splitting
	\[
	T_z(N\times\Sone)=E^s_z\oplus E^c_z\oplus E^u_z,
	\qquad
	\dim E^c=1,
	\]
	and that $E^c$ is tangent to the foliation $\{\pi(z)\}\times S^1$, $z\in \Omega(f)\times \Sone$.

    By the stable manifold theorem, there exist, for every $z\in \Omega(f)\times \Sone $, strong stable and unstable manifolds $W^s_F(z)$ and $W^u_F(z)$, respectively. 
    
    An invariant measure $\mu$ is called \emph{$u$-Gibbs} if its disintegration along the unstable manifold is absolutely continuous with respect to the volume measure along the corresponding unstable manifold.

   Recall that, for Lebesgue-a.e. point in the basin of each partially hyperbolic attracting set $\Lambda\times\Sone$, every weak-$*$ accumulation point of the sequence of empirical measures is a $u$-Gibbs measure; see Remark~\ref{rmk:acc-u-gibbs}. Recall also that, given an ergodic invariant measure $\mu$, the \emph{center Lyapunov exponent} is defined as $\lambda^c(\mu)=\int \log |f'_x(\theta)|d\mu(x,\theta)$.

    We have the following result
    \begin{ltheorem}\label{thm.empirical}
        For Lebesgue almost every $z\in N\times \Sone$ either
        \begin{itemize}
            \item $\emp_n(z)\weak \mu$, where $\mu$ is an ergodic
        $u$-Gibbs measure with $\lambda^c(\mu)<0$, or
            \item there exist ergodic $u$-Gibbs measures $\mu^-,\mu^+$ with   $\lambda^c(\mu^-),\lambda^c(\mu^+)\geq0$, such that every accumulation point of $\emp_n(z)$ is a convex combination of $\mu^-$ and $\mu^+$. 
            We allow $\mu^-=\mu^+$.
        \end{itemize}
    \end{ltheorem}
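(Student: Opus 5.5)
Since $f$ is Axiom A, Lebesgue almost every point of $N$ lies in the basin of one of finitely many hyperbolic attractors $\Lambda_1,\dots,\Lambda_k$ of $f$. Hence Lebesgue almost every $z\in N\times\Sone$ lies in the basin of some partially hyperbolic attracting set $\Lambda\times\Sone$. We may therefore fix one such $\Lambda$ and work with $F$ restricted to $\Lambda\times\Sone$. By Remark~\ref{rmk:acc-u-gibbs}, for almost every such $z$ every accumulation point of $\emp_n(z)$ is a $u$-Gibbs measure. The plan is to classify the $u$-Gibbs measures using the one-dimensional, orientation-preserving fiber dynamics.

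\textbf{Step 1 (structure of $u$-Gibbs measures).} Over a hyperbolic attractor, the projection $\pi_*\mu$ of any $u$-Gibbs measure is $u$-Gibbs for $f|_\Lambda$, so it is the SRB measure $m$ of $\Lambda$ (assume $\Lambda$ is transitive, after passing to a mixing component). Consider the ergodic $u$-Gibbs measures.

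Those with $\lambda^c<0$ have absolutely continuous center-unstable disintegration. By the standard Pesin/Hopf argument they are physical, their basins are open modulo zero, and there are only finitely many of them.

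Those with $\lambda^c\ge0$ we plan to describe via the invariance principle of Avila--Viana (or Crovisier--Poletti, for $\lambda^c\geq 0$ in fiber-bunched cocycles). For $\lambda^c=0$, the disintegrations $\theta\mapsto\mu_x$ are $su$-invariant and continuous. For $\lambda^c>0$, they are $u$-invariant.

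\textbf{Step 2 (ordering).} Using the orientation of the fibers, the region of $\Lambda\times\Sone$ outside the basins of the negative-exponent measures decomposes into invariant "gaps". Each gap is bounded by graphs, that is, by measurable sections $x\mapsto\theta^\pm(x)$ given by boundaries of the supports of the sink-type measures. The key claim is that inside each gap there are at most two relevant extremal ergodic $u$-Gibbs measures with $\lambda^c\ge0$, namely $\mu^-$ and $\mu^+$. These are obtained as the leftmost and rightmost $u$-Gibbs measures accumulating on the boundary of the gap. Every $u$-Gibbs measure supported in the gap is then a convex combination of them. I would try to prove this by showing that any ergodic $u$-Gibbs measure with nonnegative exponent in a gap has a disintegration that is monotonically comparable to the others. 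Two distinct ones would force, via an intermediate-value or ordering argument on the circle, a region of negative exponent between them, contradicting the definition of the gap.

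\textbf{Step 3 (conclusion).} Take a Lebesgue-typical $z$. If $z$ lies in the basin of a negative-exponent measure $\mu$, then $\emp_n(z)\weak\mu$, which gives the first alternative. Otherwise $z$ lies in one gap. Its accumulation points are $u$-Gibbs measures whose ergodic components have support in the closure of that gap. By Step 2 these components are $\mu^-$ or $\mu^+$, so each accumulation point is a convex combination of them. One also needs that the full-measure set of such $z$ is covered by finitely many basins plus countably many gaps. This is fine because the gaps are invariant and the statement is pointwise.

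The main obstacle is Step 2: showing that in each gap there are at most two nonnegative-exponent ergodic $u$-Gibbs measures. I would first try to handle this with the invariance principle. It makes zero-exponent measures $su$-invariant with continuous disintegration, hence rigid, and then a one-dimensional ordering argument finishes. Positive-exponent measures I would treat by considering $F^{-1}$, where they become negative-exponent measures for the inverse and so behave like repellers bounding the gaps.
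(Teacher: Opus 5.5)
\textbf{Step 2 fails: its key claim is false.}
Take $F=f\times\mathrm{Id}$. There are no measures with $\lambda^c<0$, so your construction produces a single ``gap'', namely all of $\Lambda\times\Sone$. Yet every $m\times\delta_\theta$ is an ergodic $u$-Gibbs measure with $\lambda^c=0$. So this one gap contains uncountably many of them, no negative-exponent region separates any two of them, and $m\times\delta_\theta$ is not a convex combination of two fixed measures. A fiber map whose fixed set is a Cantor set of parabolic points, with orbits drifting across each complementary arc, shows the same thing with nontrivial dynamics.

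Hence $\mu^\pm$ cannot be attached to regions cut out by the sinks. They must depend on $z$, and $t$ has to be localized relative to \emph{all} nonnegative-exponent measures. This is the missing idea, and it is how the paper argues:
\begin{itemize}
\item Every $\mu\in\cG_{\geq0}$ is $\mu_\varphi$ for a continuous, $F$-invariant, $su$-invariant $(p,q)$-graph, by Furstenberg's classification plus the invariance principle.
\item Two such graphs are either equal or disjoint in every fiber, and they interlace with a reference graph.
\item $\cG_{\geq0}$ is weak-$*$ compact, because weak-$*$ limits correspond to uniform limits of graphs. So over a transitive base point $x$ it embeds homeomorphically onto a compact subset $S_x$ of an arc $\widehat I_x$.
\item If $t$ lies on one of these graphs, then $\mu^-=\mu^+$. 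Otherwise $t$ lies in a complementary interval $(t^-,t^+)$ of $S_x$. Its endpoints exist by compactness and give $\mu^\pm$.
\item Orientation preservation keeps $F^n(\{x\}\times[t^-,t^+])$ between $\psi^-$ and $\psi^+$. Continuity of the graphs and density of the orbit of $x$ show that the closure of this forward orbit meets no other nonnegative graph.
\item Hence every ergodic component in $\cG_{\geq0}$ of an accumulation point is $\mu^-$ or $\mu^+$ (Lemma~\ref{lem:empirical-segment}).
\end{itemize}

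\textbf{Smaller problems.}
\begin{itemize}
\item You undersell the invariance principle. The $u$-Gibbs property gives $u$-invariance of the center disintegration for every exponent, and $\lambda^c\geq0$ already gives $s$-invariance. So measures with $\lambda^c>0$ are $su$-invariant with continuous disintegrations, and they are handled exactly like the zero-exponent ones. Your $F^{-1}$ detour is unnecessary. It would also not work as stated, since a $u$-Gibbs measure for $F$ need not be $u$-Gibbs for $F^{-1}$.
\item Step~3 silently assumes that a typical point outside the basins of negative-exponent measures has no accumulation point with a negative-exponent ergodic component. As you describe them, your gaps are even bounded by sink supports, so this cannot be taken for granted. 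It is the content of Proposition~\ref{prop:possible-accum}, which rests on \cite[Proposition~6.3]{CYZ2020}: for a hyperbolic $u$-Gibbs measure $\nu$, Lebesgue-a.e.\ $z$ with $\supp(\nu)\subset\omega(z)$ lies in $B(\nu)$.
\item The negative-exponent measures need not be finitely many. A $C^\infty$ fiber map can have infinitely many hyperbolic sinks accumulating at a parabolic fixed point. Only countability, which follows from physicality, is available, and it is all that is needed.
\end{itemize}
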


    Viana and Yang \cite{VianaYang2013} proved that, generically, partially hyperbolic skew-products with circle fibers have a finite number of ergodic $u$-Gibbs measures, each with negative center exponent. Observe that in the non-generic case the situation is very different. For example, a trivial skew-product with $f_x$ being the identity on $\Sone$ has an uncountable number of $u$-Gibbs measures, all of them with zero center exponent. In this case, it is easy to prove that $\mathcal{E}_{Leb}(F)(\varepsilon)= O(\varepsilon^{-1})$. We will prove that actually this is the greatest possible rate of growth of emergence.

    \begin{ltheorem}\label{thm:emergencia-ordem-pol}
    The emergence of the map $F$ satisfies $\mathcal{E}_{\Leb}(F)(\eps)=O(\eps^{-1})$.
\end{ltheorem}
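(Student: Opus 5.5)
We derive Theorem~\ref{thm:emergencia-ordem-pol} from Theorem~\ref{thm.empirical} together with a finer description of the set of ergodic $u$-Gibbs measures with non-negative center exponent. The plan is to build, for each $\eps>0$, an explicit family of $O(\eps^{-1})$ probability measures such that for Lebesgue almost every $z$ every accumulation point of $\emp_n(z)$ lies within $\Wone$-distance $\eps$ of the family. Dominated convergence then bounds the limsup of the integral in the definition of $\mathcal E_{\Leb}(F)(\eps)$ by $\eps$. Indeed, $\min_i \Wone(\emp_n(z),\nu_i)$ is bounded by $\operatorname{diam} M$ and its limsup is at most $\sup_{\mu\in\acc(z)}\min_i\Wone(\mu,\nu_i)$.

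\textbf{Step 1: finitely many contributions of negative exponent.} Points in the first alternative of Theorem~\ref{thm.empirical} converge to an ergodic $u$-Gibbs measure with $\lambda^c<0$. I would show there are only finitely many such measures, using the fact that a negative center exponent together with absolute continuity along $W^u$ gives an open basin (mod $0$) of uniform size in each attracting set $\Lambda\times\Sone$. Measures whose basins are very small cannot simply be discarded, since the emergence bound must hold for every $\eps$. A cheaper alternative is to note that their total basin volume is at most $1$, so all but $O(\eps^{-1})$ of them have basin volume less than $\eps$ each. Adding an arbitrary measure as a catch-all, the cost of this part is controlled: the remaining points contribute at most $\operatorname{diam}(M)$ times a small total volume, once the measures are ordered by basin volume and the tail has volume at most $\eps$. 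Either way this part costs $O(\eps^{-1})$ measures at worst.

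\textbf{Step 2: structure of the non-negative exponent $u$-Gibbs measures.} This is the main step. For each attracting basic piece $\Lambda$ I would show that the ergodic $u$-Gibbs measures with $\lambda^c\ge0$ are parametrized by a one-dimensional, circularly ordered family. The model is the rotation-number-zero, rigid case: a continuous invariant "$su$-state" structure in which the fiber dynamics is conjugate, over $\Lambda$, to rotations or to the identity on an interval of fibers. More concretely, I would use the invariance principle (Avila--Viana): zero center exponent forces the fiber disintegrations to be $su$-invariant, hence continuous in $x$. Positive exponents are excluded by the inverse argument, since a positive exponent makes the measure $u$-Gibbs for $F^{-1}$-type center behaviour, which is incompatible with the repelling picture except in finitely many cases. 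The goal is a map $t\mapsto\mu_t$, $t\in[0,1]$ (or $\Sone$), that is Lipschitz in $\Wone$ with constant $L$ and whose image contains all such measures; for the identity model $\mu_t=m_\Lambda\times\delta_t$ with $m_\Lambda$ the SRB measure, and $\Wone(\mu_t,\mu_s)\le|t-s|$. I expect this step to be the main obstacle. Where $\theta\mapsto$ (fiber of $\mu$) has gaps, for instance in Denjoy-like or attracting-interval configurations, I would try to show that the $u$-Gibbs measures with $\lambda^c\ge 0$ are supported on $su$-invariant graphs of continuous sections ordered in $\Sone$. The distance between two such measures is then bounded by the $\Sone$-length between the graphs, which gives the Lipschitz parametrization by fiber height.

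\textbf{Step 3: covering convex combinations.} Take $\nu_{j}=\mu_{j\eps/L}$ for $j=0,\dots,\lceil L/\eps\rceil$. Theorem~\ref{thm.empirical} says each accumulation point is $s\mu^-+(1-s)\mu^+$. I would show, possibly as a refinement of Theorem~\ref{thm.empirical}, that for a.e.\ $z$ the pair $\mu^\pm$ is either equal or adjacent in the circular order, so that convex combinations of adjacent pairs add no extra parameter. Otherwise I would add convex combinations $s\nu_j+(1-s)\nu_{j'}$ with $s\in\eps\mathbb Z$, but that costs $\eps^{-3}$, which would only give a polynomial bound rather than $O(\eps^{-1})$. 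So the adjacency claim, which comes from the monotonicity of the fiber maps (the orbit of $z$ is trapped between two consecutive invariant sections), is essential. Convexity of $\Wone$ then places each accumulation point within $\eps$ of some $\nu_j$ or of a segment between consecutive ones. Refining the mesh by a constant factor handles this, giving $O(\eps^{-1})$ measures in total.
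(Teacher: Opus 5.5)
Your Step~1 has a genuine gap: neither of the two routes you propose gives $O(\eps^{-1})$.

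\textbf{Finiteness fails.} There need not be finitely many negative-exponent $u$-Gibbs measures, and their basins need not have uniform size. Take $F=f\times g$ with $f$ a linear Anosov map, so that $m$ is Lebesgue. Let $g$ be a $C^\infty$ circle diffeomorphism, $C^1$-close to the identity so that the domination holds. Choose $g$ so that its fixed points alternate between attracting points $p_k$ and repelling points, accumulate on one non-hyperbolic fixed point, and the basin of $p_k$ is an arc of length $\ell_k$. Then each $m\times\delta_{p_k}$ is an ergodic $u$-Gibbs measure with $\lambda^c<0$, whose basin has volume $\ell_k$.

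\textbf{The basin-volume fallback fails.} It only shows that at most $\eps^{-1}$ basins have volume $\geq\eps$. It says nothing about the total volume of the remaining basins. With $\ell_k\asymp 1/(k\log^2k)$, the tail $\sum_{k>n}\ell_k\asymp 1/\log n$ drops below $\eps$ only when $n\approx e^{c/\eps}$. So ordering by basin volume costs a superpolynomial number of measures. Yet the emergence of this example is $O(\eps^{-1})$, because $\Wone(m\times\delta_{p_k},m\times\delta_{p_j})\le d(p_k,p_j)$.

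\textbf{The missing idea} is to cover $\cG_{<0}$ metrically rather than by basin size. By Furstenberg's classification (Theorem~\ref{thm:classification}), every ergodic measure projecting to $m$ is a $(p,q)$-graph measure with the same $p,q$. Two distinct such graphs interlace: each gap of one contains exactly one point of the other. Order a finite family by this interlacing and couple the graphs fiberwise point-to-point. The fiber arc lengths then telescope, giving $\sum_r\Wone(\mu^r,\mu^{r+1})\le\ell(\Sone)/pq$ (Proposition~\ref{prop:cyclic-length}). Hence an $\eps$-separated subset of $\cM^{\mathrm{erg}}_m(F)$ has at most $1+L/\eps$ elements (Corollary~\ref{cor:linear-covering}). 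This covers $\overline{\cG_{<0}}$ however the basins are distributed.

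The same interlacing and telescoping estimate is what your Step~2 lacks. As written, Step~2 is a plan rather than an argument, and two of its claims should be dropped.

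\textbf{Positive exponents.} These cannot be excluded. In the example above, each repelling fixed point $r$ gives a $u$-Gibbs measure $m\times\delta_r$ with $\lambda^c>0$, and there are infinitely many of them. They also need not be excluded. The center disintegration of any $u$-Gibbs measure is $u$-invariant, and Avila--Viana gives $s$-invariance whenever $\lambda^c\ge 0$. So, as soon as $\#\cG_{\ge0}\ge2$, every element of $\cG_{\ge0}$ is carried by a continuous $su$-invariant $(p,q)$-graph. These are multivalued graphs, not sections, which is why the interlacing must come from Furstenberg's theorem.

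\textbf{Lipschitz parametrization.} You give no justification for a Lipschitz parametrization by the height in a single fiber, and none is needed. The paper parametrizes $\cG_{\ge0}$ by the unique point $\iota(\mu)$ of its graph lying in a fixed gap $(a_1,a_2)$ of a reference graph over a transitive point. It shows $\iota$ is a homeomorphism onto its image, and the telescoping bound shows $\iota^{-1}$ has finite $\Wone$-length.

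\textbf{Step~3.} Your adjacency claim is correct; it is Lemma~\ref{lem:empirical-segment}. But the curve you sample must then contain the segment $[\mu_J^-,\mu_J^+]$ over every gap $J$ of $\iota(\cG_{\ge0})$, otherwise points in the middle of such a segment need not be near any $\nu_j$. Interpolating linearly on the gaps keeps the curve continuous and does not increase its length. A maximal $\eps$-separated set on a curve of length $L$ has at most $1+L/\eps$ points, which gives the $O(\eps^{-1})$ bound.
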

    
    Theorem~\ref{thm:low-emergence} is a direct consequence of this theorem.

%----------------------------------------------------------------------
%----------------------------------------------------------------------    
\section{Invariant measures of skew-products}
%----------------------------------------------------------------------
%----------------------------------------------------------------------
In this section we recall results of Furstenberg \cite{Furstenberg1961} on invariant measures of skew-products with circle fibers that project to a fixed ergodic measure.

	Let $\Omega$ be a compact metric space, let $g: \Omega \to \Omega$ be a homeomorphism, and let $m$ be a $g$-invariant ergodic Borel probability measure on $\Omega$. Let $G:\Omega\times \Sone\to \Omega\times \Sone$ be a skew-product homeomorphism given by $(\omega,t)\mapsto (g(\omega),g_\omega(t))$, with $g_\omega:\Sone\to \Sone$ order preserving homeomorphism for every $\omega\in \Omega$. For $n\geq1$, we write $g_\omega^n:=g_{g^{n-1}(\omega)}\circ\cdots\circ g_\omega$. We endow $\Omega \times \Sone$ with the sum product metric, where the distance in $\Sone$ is the geodesic one.

     We denote by
     $$
    \cM_m(G):=\left\{ \nu\in\Prob(\Omega\times\Sone):G_*\nu=\nu,\quad(\pi_1)_*\nu=m\right\}$$ the set of $G$-invariant probability measures projecting to $m$, and by $\cM_m^{\mathrm{erg}}(G)$ the subset of its ergodic elements.

%----------------------------------------------------------------------
%----------------------------------------------------------------------
    \subsection*{Invariant \texorpdfstring{$(p,q)$}{p,q}-graphs}
%----------------------------------------------------------------------
%----------------------------------------------------------------------

	Following \cite{JagerKeller2006}, we define the invariant $(p,q)$-graphs. For any $k\in\mathbb N$, define $(\Sone)^k_*= \{(x_1,\dots,x_k)\in (\Sone)^k:x_i\neq x_j,\ \forall\,1\leq i<j\leq k\}$. Let $p,q\in\N$ and 
	\[
	\varphi\colon\Omega\longrightarrow(\Sone)^{pq}_*,
	\qquad
	\varphi(\omega)
	=
	\bigl(\varphi_j^i(\omega)\bigr)_{
		\substack{1\le i\le p\\1\le j\le q}}
	\]
	be a measurable map defined $ m$-almost everywhere. We denote by $\Phi
	=
	\operatorname{graph}(\varphi)$ the corresponding subset of $\Omega\times\Sone$ defined by $\cup_{\omega\in \Omega}\{\omega\}\times \Phi_\omega$ where 
	\[
	\Phi_\omega
	=
	\left\{
	\varphi_j^i(\omega):
	1\le i\le p,\ 1\le j\le q
	\right\}.
	\]

	For each $i=1,\dots,p$, we also write
	\[
	\varphi^i
	:=
	(\varphi_1^i,\dots,\varphi_q^i)
	\]
	and denote by
	\[
	\Phi^i_\omega
	:=
	\{
	\varphi_1^i(\omega),\dots,\varphi_q^i(\omega)
	\}
	\]
	the corresponding $q$-point subset of the fiber over $\omega$.

			\begin{definition}\label{def:pq}
		The map $\varphi$ is called an \emph{invariant $(p,q)$-graph} if
		\[
		g_\omega(\Phi_\omega)
		=
		\Phi_{g\omega}
		\]
		for $m$-a.e.\ $\omega$, and the following two conditions hold:
		\begin{enumerate}[label=(MIG\arabic*), leftmargin=3.7em]
			\item $\Phi$ cannot be measurably decomposed into smaller
			disjoint invariant subgraphs;

			\item each $\varphi^i$ is $p$-periodic, in the sense that
			for $m$-a.e.\ $\omega$,
			$$ 
			g_\omega^p(\Phi^i_\omega)
			=
			\Phi^i_{g^p\omega},\quad
			\text{and }
			\quad
			g_\omega^r(\Phi^i_\omega)
			\neq
			\Phi^i_{g^r\omega},
			\quad
			r=1,\dots,p-1,
			$$ 
			also $\varphi^i$ cannot be further decomposed into smaller
			invariant or periodic subgraphs.
		\end{enumerate}
	\end{definition}

		In this notation, for $m$-a.e.\ $\omega$,
	$ 
	\Phi_\omega
	=
	\Phi^1_\omega\cup\cdots\cup\Phi^p_\omega
	$ 
	
    Since $\varphi$ is a $pq$-valued graph, these $pq$ points are
	pairwise distinct for $m$-a.e.\ $\omega$.  In particular, the
	$p$ subsets $ 
	\Phi^1_\omega,\dots,\Phi^p_\omega
	$ 
	are pairwise disjoint and each contains exactly $q$ points.

\begin{remark}
    The two integers $p$ and $q$ have different meanings. Each
$q$-valued graph $\varphi^i=(\varphi_1^i,\dots,\varphi_q^i)$ contains $q$ points on each fiber, while $p$ is its minimal period.
The $p$ graphs $\varphi^1,\dots,\varphi^p$ are cyclically permuted by the dynamics and together form the full
invariant $(p,q)$-graph.  Consequently, the latter contains exactly $pq$ points on almost every fiber.
\end{remark}

Before moving on, we observe that two invariant $(p,q)$-graphs must coincide or be disjoint $m$-a.e.:

\begin{lemma}\label{lem:irreducible-graphs}
	Let $\varphi$ and $\psi$ be two invariant $(p,q)$-graphs, with
	associated fiber sets $\Phi_\omega$ and $\Psi_\omega$. Then either $\Phi_\omega=\Psi_\omega$ for $m$-a.e.\ $\omega$, or $\Phi_\omega\cap\Psi_\omega=\emptyset$ for $m$-a.e.\ $\omega$.
\end{lemma}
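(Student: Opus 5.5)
The plan is to look at the intersection set and use ergodicity together with the minimality condition (MIG1). Put
\[
A:=\{\omega\in\Omega:\ \Phi_\omega\cap\Psi_\omega\neq\emptyset\}.
\]
Since $\varphi,\psi$ are measurable, $A$ is measurable. The fiber maps $g_\omega$ are bijections of $\Sone$ and both graphs are invariant, so
\[
g_\omega(\Phi_\omega\cap\Psi_\omega)=g_\omega(\Phi_\omega)\cap g_\omega(\Psi_\omega)=\Phi_{g\omega}\cap\Psi_{g\omega}
\]
for $m$-a.e.\ $\omega$. Hence $A$ is $g$-invariant mod $m$, and ergodicity of $m$ gives $m(A)\in\{0,1\}$. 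If $m(A)=0$ we are in the second alternative, so from now on assume $m(A)=1$.

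In that case define, for $m$-a.e.\ $\omega$, the nonempty finite set $\Theta_\omega:=\Phi_\omega\cap\Psi_\omega\subset\Phi_\omega$. By the identity above it satisfies $g_\omega(\Theta_\omega)=\Theta_{g\omega}$. The cardinality $k(\omega)=\#\Theta_\omega$ is a measurable, $g$-invariant function with values in $\{1,\dots,pq\}$, so it is $m$-a.e.\ equal to a constant $k$.

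The next task is to write $\Theta$ as a measurable $k$-valued graph. One way is to order its points measurably: in each fiber, list them counterclockwise starting from the first point of $\Phi_\omega$ in the fixed ordering $\varphi^1_1,\dots,\varphi^p_q$ that lies in $\Theta_\omega$. Each selection has the form $\omega\mapsto\varphi^i_j(\omega)$ on a measurable set, so this enumeration is measurable. Thus $\Theta$ is a measurable invariant subgraph of $\Phi$, and its complement $\Phi\setminus\Theta$ is also invariant and measurable. If $k<pq$, this would decompose $\Phi$ into two disjoint invariant subgraphs, contradicting (MIG1). Therefore $\Theta_\omega=\Phi_\omega$ a.e. Exchanging the roles of $\varphi$ and $\psi$ gives $\Theta_\omega=\Psi_\omega$ a.e., so $\Phi_\omega=\Psi_\omega$ for $m$-a.e.\ $\omega$.

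The main point needing care is making precise that $\Theta$ is a legitimate subgraph in the sense of (MIG1). That is, one must check that the measurable enumeration above is allowed, and that (MIG1) should be read as "no invariant measurable subset of $\Phi$ whose fibers are proper, nonempty, and of constant cardinality." The period structure in (MIG2) is not needed. The only inputs are invariance, ergodicity, and (MIG1), and the argument works even if $\varphi$ and $\psi$ have different $(p,q)$.
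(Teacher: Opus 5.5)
Your proof is correct and takes essentially the same approach as the paper. In both, the cardinality of $\Phi_\omega\cap\Psi_\omega$ is $g$-invariant and hence a.e.\ constant by ergodicity. If that constant is nonzero, the intersection is a nonempty invariant subgraph of both $\Phi$ and $\Psi$, which (MIG1) forces to equal each of them. Your added details — the measurable enumeration and passing through the complement $\Phi\setminus\Theta$ to match the ``decomposition'' wording of (MIG1) — are harmless refinements of the same argument.
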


\begin{proof}
	For $m$-a.e.\ $\omega$, define the measurable function $\omega \mapsto r(\omega):=\#\bigl(\Phi_\omega\cap\Psi_\omega\bigr)$. Since both graphs are invariant and every fiber map $g_\omega$ is injective,
	\[
	g_\omega
	\bigl(
	\Phi_\omega\cap\Psi_\omega
	\bigr)
	=
	\Phi_{g\omega}\cap\Psi_{g\omega}.
	\]
	Hence $r(g\omega)=r(\omega)$ for $m$-a.e.\ $\omega$ and, since $m$ is ergodic, there exists an integer $r_0\geq0$ such that $r(\omega)=r_0$ for $m$-a.e.\ $\omega$.

	If $r_0=0$, then $\Phi_\omega\cap\Psi_\omega=\emptyset$ for $m$-a.e.\ $\omega$. If $r_0>0$, then
	\[
	\omega\longmapsto
	\Phi_\omega\cap\Psi_\omega
	\]
	defines a nonempty finite-valued invariant subgraph of both
	$\Phi$ and $\Psi$. Since, by (MIG1), neither $\Phi$ nor $\Psi$
	contains a proper nonempty invariant subgraph, we must have
	\[
	\Phi_\omega\cap\Psi_\omega
	=
	\Phi_\omega
	=
	\Psi_\omega
	\]
	for $m$-a.e.\ $\omega$.
\end{proof}

\subsection*{The measure associated to
		invariant \texorpdfstring{$(p,q)$}{p,q}-graphs}
	
	Let $\varphi$ be an invariant $(p,q)$-graph and set $N:=pq$. We associate to $\varphi$ the probability measure
	\[
	\mu_\varphi
	:=
	\int_\Omega
	(\mu_\varphi)_\omega\,dm(\omega),
	\]
	where
	\[
	(\mu_\varphi)_\omega
	=
	\frac1{pq}
	\sum_{i=1}^p\sum_{j=1}^q
	\delta_{\varphi_j^i(\omega)}.
	\]
	Equivalently, for every measurable set
	$A\subset\Omega\times\Sone$,
	\[
	\mu_\varphi(A)
	=
	\frac1{pq}
	\sum_{i=1}^p\sum_{j=1}^q
	m
	\left(
	\left\{
	\omega:
	(\omega,\varphi_j^i(\omega))\in A
	\right\}
	\right).
	\]
	
	\begin{proposition}\label{prop:pq-measure}
		The measure $\mu_\varphi$ belongs to
		$\cM_m^{\mathrm{erg}}(G)$.
	\end{proposition}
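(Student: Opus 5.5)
We must show three things: $\mu_\varphi$ is a probability measure projecting to $m$, it is $G$-invariant, and it is ergodic. The first is immediate from the definition: each $(\mu_\varphi)_\omega$ is a probability measure on the fiber $\{\omega\}\times\Sone$, and $\omega\mapsto(\mu_\varphi)_\omega$ is measurable because the $\varphi^i_j$ are measurable. Hence $\mu_\varphi$ is a well-defined probability with $(\pi_1)_*\mu_\varphi=m$.

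For invariance, fix a bounded measurable $h$ and compute
\[
\int h\circ G\,d\mu_\varphi=\int_\Omega \frac1{pq}\sum_{i,j} h\bigl(g\omega,g_\omega(\varphi^i_j(\omega))\bigr)\,dm(\omega).
\]
Since $g_\omega$ is injective and $g_\omega(\Phi_\omega)=\Phi_{g\omega}$, the multiset $\{g_\omega(\varphi^i_j(\omega))\}$ is exactly the set $\Phi_{g\omega}$ of $pq$ distinct points. So the inner sum equals $\frac1{pq}\sum_{x\in\Phi_{g\omega}}h(g\omega,x)$. Integrating and using $g_*m=m$ gives $\int h\,d\mu_\varphi$.

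Ergodicity is the main step. Let $A\subset\Omega\times\Sone$ be $G$-invariant mod $\mu_\varphi$; we may take $A$ strictly invariant. Define the fiber sets $A_\omega:=\{x\in\Phi_\omega:(\omega,x)\in A\}$. These are measurable subsets of $\Phi_\omega$, since membership of each $\varphi^i_j(\omega)$ is a measurable condition. Invariance of $A$, together with $g_\omega(\Phi_\omega)=\Phi_{g\omega}$, gives $g_\omega(A_\omega)=A_{g\omega}$ for $m$-a.e.\ $\omega$. Thus $\#A_\omega$ is $g$-invariant, hence equal to a constant $r_0$ a.e. by ergodicity of $m$.

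If $0<r_0<pq$, then $\omega\mapsto A_\omega$ and $\omega\mapsto\Phi_\omega\setminus A_\omega$ split $\Phi$ measurably into two disjoint nonempty invariant subgraphs, contradicting (MIG1). This is the same mechanism as in Lemma~\ref{lem:irreducible-graphs}. Hence $r_0\in\{0,pq\}$, and so $\mu_\varphi(A)=r_0/(pq)\in\{0,1\}$.

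The only technical care needed is the measurability of the decomposition. Formally, a subgraph must be written as a measurable selection of coordinates $\omega\mapsto$ an ordered tuple. This can be done by ordering the points of $A_\omega$ according to the measurable index labels $(i,j)$ of the $\varphi^i_j$, which gives a measurable $r_0$-valued graph. With this, (MIG1) applies and ergodicity follows. Condition (MIG2) is not needed for this proposition.
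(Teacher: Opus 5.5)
Your proof is correct and follows essentially the same route as the paper. The projection to $m$ is immediate, invariance comes from $g_\omega$ mapping $\Phi_\omega$ bijectively onto $\Phi_{g\omega}$, and ergodicity comes from the $g$-invariant count $\#(A_\omega\cap\Phi_\omega)$, which is constant by ergodicity of $m$ and forced into $\{0,pq\}$ by (MIG1). Your added remarks (replacing $A$ by a strictly invariant set, and making the subgraph measurable) are harmless details that the paper leaves implicit.
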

	
	\begin{proof}
		The equality $(\pi_1)_*\mu_\varphi=m$ follows immediately from the definition. Since $g_\omega$ bijectively maps the $pq$ points of $\Phi_\omega$ onto the
		$pq$ points of $\Phi_{g\omega}$,
		\[
		(g_\omega)_*(\mu_\varphi)_\omega
		=
		(\mu_\varphi)_{g\omega}.
		\]
		Hence $\mu_\varphi$ is $G$-invariant.
		
		To prove ergodicity, let
		$A\subset\Omega\times\Sone$ be invariant modulo
		$\mu_\varphi$ and define
		\[
		r(\omega)
		:=
		\#
		\bigl(
		A_\omega\cap\Phi_\omega
		\bigr).
		\]
		Since $G$ maps $\Phi$ bijectively onto itself, $A$ is
		invariant modulo $\mu_\varphi$, and each atom of $(\mu_\varphi)_{\omega}$ has mass $1/pq$, the function $r$ is
		$g$-invariant for $m$-a.e.\ $\omega$. As $m$ is ergodic,
		$r$ is constant a.e. If $0<r<pq$, the sets $A_\omega\cap\Phi_\omega$ define a proper nonempty invariant subgraph of $\Phi$, contradicting (MIG1). Therefore, $r=0$ or $r=pq$ for $m$-a.e.\ $\omega$. Consequently,
		\[
		\mu_\varphi(A)
		=
		\int_\Omega
		\frac{r(\omega)}{pq}\,dm(\omega)
		\in\{0,1\}.
		\]
		Thus $\mu_\varphi$ is ergodic.
	\end{proof}

Given $s,t\in \Sone$, we denote by $(s,t)$ the open interval with endpoints $s$ and $t$, taken in the clockwise direction. Now let us state Furstenberg's Theorem \cite[Theorem~4.1]{Furstenberg1961}. Furstenberg uses different terminology; we state it as it is in \cite[Theorem~2.4]{JagerSkewProductNotes}; see also \cite[Theorem~2.5]{JagerKeller2006} and \cite[Theorem~2.10]{JagerStark2006} for the quasiperiodic setting.

	\begin{theorem}[Furstenberg]\label{thm:classification}
		Let $G$ be a skew-product such that $g$ is a homeomorphism and that $g_\omega$ is a homeomorphism for every $\omega\in \Omega$. Let $m$ be a $g$-invariant ergodic Borel probability measure.
		\begin{enumerate}[label=(\roman*), leftmargin=2.3em]
			\item If there exists no invariant graph, then $\cM_m(G)$ consists of a
			single measure $\mu$, and $\mu_\omega$ is nonatomic for $m$-a.e.\
			$\omega$.
            \item If there exists an invariant $(p,q)$-graph $\varphi$, then every
			$\mu\in\cM_m^{\mathrm{erg}}(G)$ is of the form
			\[
			\mu=\mu_\psi
			\]
			for an invariant $(p,q)$-graph $\psi$ with the \emph{same integers
			$p$ and $q$}. Moreover, for $m$ almost every $\omega$, if $(\varphi^i_j(\omega),\varphi^{i'}_{j'}(\omega))\cap \Phi(\omega)=\emptyset$, then $[\varphi^i_j(\omega),\varphi^{i'}_{j'}(\omega))$ contains exactly one point of $\psi(\omega)$.
		\end{enumerate}
	\end{theorem}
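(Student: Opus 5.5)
The plan rests on one observation. For $\mu\in\cM_m(G)$ with disintegration $\omega\mapsto\mu_\omega$, invariance means $(g_\omega)_*\mu_\omega=\mu_{g\omega}$ for $m$-a.e.\ $\omega$. Every numerical quantity built from $\mu_\omega$ that orientation-preserving homeomorphisms respect is therefore $g$-invariant, and hence constant a.e.\ by ergodicity of $m$. Every finite configuration built from $\mu_\omega$ is equivariant, and hence an invariant graph. I would use this three times.

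\emph{Step 1: ergodic measures are either nonatomic or carried by a $(p,q)$-graph.} Fix $\mu\in\cM_m^{\mathrm{erg}}(G)$ and set $a(\omega):=\max_t\mu_\omega(\{t\})$. This function is invariant, so $a\equiv a_0$ a.e. If $a_0>0$, the set $A_\omega$ of atoms of mass exactly $a_0$ has at most $1/a_0$ points and satisfies $g_\omega(A_\omega)=A_{g\omega}$. Its cardinality is invariant, hence constant, so $A$ is a finite invariant graph. The set $\bigcup_\omega\{\omega\}\times A_\omega$ is invariant and has positive $\mu$-measure, so by ergodicity it has full measure. Thus $\mu_\omega$ is uniform on $A_\omega$.

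Next I would extract a minimal invariant subgraph of $A$ (the cardinality of a minimal one is well defined since cardinalities are constant a.e.). Lemma~\ref{lem:irreducible-graphs} shows that distinct minimal subgraphs are disjoint, and ergodicity again forces $A$ itself to be minimal. The cyclic structure $(p,q)$ then comes from the finite permutation induced by decomposing into periodic pieces, which gives (MIG2). So $\mu=\mu_\psi$. Conversely, if no invariant graph exists, every ergodic $\mu$ has $a_0=0$, i.e.\ nonatomic fibers.

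\emph{Step 2: uniqueness when there is no invariant graph.} Suppose $\mu\neq\nu$ are ergodic with nonatomic fibers. Let $\lambda=\frac12(\mu+\nu)$ and $\sigma_\omega:=\mu_\omega-\nu_\omega$. Fix a base point $s_0$ and put $D_\omega(s):=\sigma_\omega([s_0,s))$. Changing $s_0$ only adds a constant to $D_\omega$, so the set $K_\omega$ of maximum points of $D_\omega$ is intrinsic and satisfies $g_\omega(K_\omega)=K_{g\omega}$. Both $K_\omega$ and $D_\omega$ are well defined and continuous because the measures are nonatomic.

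If $K_\omega=\Sone$ on a positive-measure set, then $\mu_\omega=\nu_\omega$ there. Ergodicity, applied through the invariant set where the fibers agree, would then give $\mu=\nu$. Otherwise $\Sone\setminus K_\omega$ is a nonempty countable union of open intervals. On such an interval $D_\omega$ is strictly below its maximum, so the interval carries positive $|\sigma_\omega|$-mass and hence positive $\lambda_\omega$-mass. Since these masses sum to at most $1$, the maximal $\lambda_\omega$-mass of a component is attained by finitely many components, and this maximal mass is invariant. The endpoints of these finitely many components form a finite invariant graph, which is a contradiction. Hence $\cM_m(G)$ is a single measure, and it is nonatomic by Step 1.

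\emph{Step 3: case (ii).} Assume some invariant $(p,q)$-graph $\varphi$ exists. First, no ergodic $\mu$ can be nonatomic. If it were, compare $\mu$ with $\mu_\varphi$ via the Step 2 argument, now with $\sigma_\omega=\mu_\omega-(\mu_\varphi)_\omega$; $D_\omega$ has jumps at the atoms of $\Phi_\omega$, so one must use the upper semicontinuous version. The intervals of $\Phi_\omega$ and their $\mu_\omega$-masses give invariant finite data forcing $\mu_\omega$ to have atoms, contradicting $a_0=0$. So every ergodic $\mu$ equals $\mu_\psi$ by Step 1.

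For the interlacing, I would consider the function counting the points of $\Psi_\omega$ in each half-open gap $[\varphi^i_j,\varphi^{i'}_{j'})$ between consecutive points of $\Phi_\omega$. The dynamics permutes these gaps, so the minimum count is invariant. The gaps realizing the minimum, together with the points of $\Psi_\omega$ in them, define invariant subgraphs. Minimality (MIG1) of both $\Phi$ and $\Psi$ should then force every gap to contain the same number $k$ of points of $\Psi$, with $k\ge1$ unless the two graphs are disjoint in an incompatible way. Swapping the roles of $\Phi$ and $\Psi$ gives $k=1$. With exactly one point per gap, $\#\Psi_\omega=pq$, and the cyclic permutation structure is transported from $\varphi$ to $\psi$, which yields the same $p$ and $q$.

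\emph{Main obstacle.} I expect the hardest step to be Step 3: showing that the minimal-count argument really forces exactly one point per gap, and that the periodic decomposition gives the same $(p,q)$ rather than just the same total $pq$. I would attack it by lifting the problem to the finite permutation of gaps induced by $g_\omega$ and using (MIG2) for both graphs simultaneously.
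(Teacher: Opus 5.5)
\textbf{The gap.} It is in the first half of your Step 3: ruling out an ergodic $\mu$ with nonatomic fibers when an invariant $(p,q)$-graph $\varphi$ exists. Your Step 2 template ends by producing a finite invariant graph. That contradicts only the hypothesis of (i). In case (ii) such graphs exist, so the conclusion proves nothing, and your phrase ``invariant finite data forcing $\mu_\omega$ to have atoms'' is asserted, not derived.

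Carried through correctly, the upper semicontinuous $D_\omega$ yields only this: $K_\omega\cap\Phi_\omega$ is a nonempty invariant subgraph, hence all of $\Phi_\omega$ by (MIG1). So every gap $[\phi_i(\omega),\phi_{i+1}(\omega))$ has $\mu_\omega$-mass exactly $1/pq$, which is the paper's first step.

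No argument using only $(g_\omega)_*\mu_\omega=\mu_{g\omega}$ and ergodicity of $m$ can go further. Take $G=g\times\mathrm{Id}$, $\varphi\equiv 0$ and $\mu=m\times\Leb$ (with $\Leb$ the length on $\Sone$). Every fiberwise identity you use holds, the single gap has mass $1=1/pq$, and yet the fibers are nonatomic.

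\textbf{How to close it.} You must use ergodicity of $\mu$ itself on a $G$-invariant set. The paper lets $\psi_i(\omega)$ be the $1/(2pq)$-quantile of $\mu_\omega$ in the $i$-th gap, so that $\mu_\omega([\phi_i,\psi_i))\le\frac1{2pq}\le\mu_\omega([\phi_i,\psi_i])$. The sets $\bigcup_\omega\{\omega\}\times\bigcup_i[\phi_i(\omega),\psi_i(\omega))$ and $\bigcup_\omega\{\omega\}\times\bigcup_i[\phi_i(\omega),\psi_i(\omega)]$ are $G$-invariant, because $g_\omega$ permutes gaps preserving orientation and fiber masses. Their $\mu$-measures are $\le\frac12$ and $\ge\frac12$ respectively, hence $0$ and $1$ by ergodicity. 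This forces $\mu_\omega(\{\psi_i(\omega)\})=1/pq$.

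This single argument treats atomic and nonatomic $\mu$ together. It directly gives $\mu=\mu_\psi$ with exactly one point of $\Psi_\omega$ per gap, and the same $(p,q)$ via the equivariant bijection $\phi_i\mapsto\psi_i$. So your Step 1 is not needed for (ii).

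\textbf{What is sound.} The part you flagged as the main obstacle is fine as you set it up:
\begin{enumerate}
\item The left endpoints of minimal-count gaps form an invariant subset of $\Phi_\omega$, so by (MIG1) every gap contains the same number $k$ of points of $\Psi_\omega$.
\item $k\ge 1$ is automatic, since $\#\Psi_\omega=k\,\#\Phi_\omega$.
\item Swapping the roles of $\Phi$ and $\Psi$ gives $k=1$.
\item The resulting bijection $\Phi_\omega\to\Psi_\omega$ commutes with the fiber maps and transports the $(p,q)$-structure.
\end{enumerate}
Your Step 2 is also a reasonable proof of item (i), which the paper only cites from Furstenberg.
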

	The second part of item (ii), although not explicitly in \cite{Furstenberg1961} or \cite{JagerKeller2006}, is a consequence of the proof. For completeness, let us prove this part assuming the existence of an invariant $(p,q)$-graph. We follow the notes of \cite{JagerSkewProductNotes}.
\begin{proof}
      Let $\Phi_\omega=\{\phi_1(\omega),\dots,\phi_k(\omega)\}$, with $k=pq$, be cyclically ordered, with $\phi_{k+1}(\omega)=\phi_1(\omega)$. This means that $(\phi_i(\omega),\phi_{i+1}(\omega))\cap \Phi_\omega=\emptyset$ for $1\leq i \leq k$. Observe that, as $g_\omega$ is a homeomorphism and $\Phi_\omega$ is invariant, we have that if $g_\omega(\phi_i(\omega))=\phi_j(g(\omega))$ then $g_\omega(\phi_{i+1}(\omega))=\phi_{j+1}(g(\omega))$.

    Let $\mu\in \cM_m^{\mathrm{erg}}(G)$ and take $\omega\mapsto \mu_\omega$ the Rokhlin disintegration of $\mu$. Observe that since $g$ is invertible, $\mu$ is $G$-invariant, and the Rokhlin disintegration is unique $(g_\omega)_\ast \mu_\omega=\mu_{g(\omega)}$. First, we will prove that $\mu_\omega([\phi_i(\omega),\phi_{i+1}(\omega)))=1/k$ for $m$-almost every $\omega\in \Omega$. Assume not. Then, for each $\omega$ let $\{i_1(\omega)<i_2(\omega)<\dots<i_\ell(\omega)\}$ be such that $\mu_\omega([\phi_{i_j}(\omega),\phi_{i_j+1}(\omega)))<1/k$ for $j=1,\dots,\ell$, with $\ell$ maximal. Observe that for each $\ell$, the set of $\omega$ that satisfies this is $g$-invariant, so there exists $\ell$ that works for $m$-almost every $\omega$.

    For each $\omega$ in this set define $\Phi'_\omega=\{\phi_{i_1(\omega)}(\omega),\dots,\phi_{i_\ell(\omega)}(\omega)\}$. Now observe that as ${g_\omega}_*\mu_\omega=\mu_{g(\omega)}$ and $g_\omega$ preserves orientation, we have that $g_\omega\Phi'_\omega=\Phi'_{g(\omega)}$. Then it is an invariant graph contained in $\Phi_\omega$ and by (MIG1) $\Phi'_\omega=\Phi_\omega$. But this implies that $\mu_\omega(\Sone)<1$, a contradiction.

    Define
\[
r_i(\omega):=
\sup\left\{
r\in[0,\phi_{i+1}(\omega)-\phi_i(\omega)):
\mu_\omega\bigl([\phi_i(\omega),\phi_i(\omega)+r)\bigr)
\leq\frac{1}{2pq}
\right\}
\]
and $\psi_i(\omega):=\phi_i(\omega)+r_i(\omega)$.
This gives an invariant graph $\psi$. Moreover, the map $H_\omega : \Phi_\omega \to \Psi_\omega$ defined by $H_{\omega}(\phi_{i}(\omega))=\psi_{i}(\omega)$ is a measurable bijection and commutes with the dynamics for $m$-a.e. $\omega$, i.e., $H_{g\omega}\circ g_{\omega}=g_\omega \circ H_{\omega}$. In particular, $H:\Phi\to \Psi$ preserves invariant and periodic subgraphs, and also their minimal periods. Hence, $\psi$ is an invariant $(p,q)$-graph with the same integers $p$ and $q$ as $\varphi$.

Now
\[
\mu_\omega\bigl([\phi_i(\omega),\psi_i(\omega))\bigr)
\leq\frac{1}{2pq}
\qquad\text{and}\qquad
\mu_\omega\bigl([\phi_i(\omega),\psi_i(\omega)]\bigr)
\geq\frac{1}{2pq},
\]
and thus
\[
\mu\left(\bigcup_{i=1}^{pq}[\phi_i,\psi_i)\right)
\leq\frac12
\qquad\text{and}\qquad
\mu\left(\bigcup_{i=1}^{pq}[\phi_i,\psi_i]\right)
\geq\frac12.
\]
By ergodicity, we have that the first set has zero measure and the second one has measure $1$, so
$\mu_\omega(\{\psi_i(\omega)\})=1/k$ for $m$-almost every $\omega$. As a consequence $\mu=\mu_\psi$ and $[\phi_i(\omega),\phi_{i+1}(\omega))\cap \Psi(\omega)=\{\psi_i(\omega)\}$.
\end{proof}

\begin{proposition}\label{prop:cyclic-length}
    If the system has an invariant $(p,q)$-graph, then there exists a constant $L>0$ such that, for every finite family of distinct measures
    $$
    \mu^0,\ldots,\mu^{K-1}
    \in\cM_m^{\mathrm{erg}}(G),
    $$
    the measures can be cyclically ordered so that
    $$
    \sum_{r=0}^{K-1}
    W_1(\mu^r,\mu^{r+1})
    \leq L,
    $$
    where $\mu^K:=\mu^0$.
\end{proposition}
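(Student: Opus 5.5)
By Theorem~\ref{thm:classification}(ii), every $\mu\in\cM_m^{\mathrm{erg}}(G)$ is of the form $\mu_\psi$ for an invariant $(p,q)$-graph $\psi$, and by Lemma~\ref{lem:irreducible-graphs} distinct measures correspond to graphs whose fibers are disjoint for $m$-a.e.\ $\omega$. The plan is to use the cyclic order of the circle fiber by fiber to order the measures, and to show that the resulting transport costs telescope to a bounded total.

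\textbf{Step 1: fiberwise cyclic order.} Given distinct $\mu^0,\dots,\mu^{K-1}$, write $\mu^r=\mu_{\psi^r}$. For a.e.\ $\omega$ the sets $\Psi^r_\omega$ are pairwise disjoint, each has $k=pq$ points, and all are interlaced with the reference set $\Phi_\omega$. Indeed, applying the second part of Theorem~\ref{thm:classification}(ii) with $\varphi$ replaced by $\psi^0$ (also an invariant $(p,q)$-graph), each gap $[\psi^0_i(\omega),\psi^0_{i+1}(\omega))$ contains exactly one point of each $\Psi^r_\omega$. Label by $\psi^r_i(\omega)$ the point of $\Psi^r_\omega$ in the $i$-th gap. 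Inside the gap $[\psi^0_1(\omega),\psi^0_2(\omega))$ the points $\psi^r_1(\omega)$ appear in some linear order, which defines a permutation $\sigma_\omega$ of $\{0,\dots,K-1\}$ starting at $0$.

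\textbf{Step 2: $\sigma_\omega$ is independent of $i$ and of $\omega$.} Because $g_\omega$ is an orientation-preserving homeomorphism mapping each $\Psi^r_\omega$ to $\Psi^r_{g\omega}$, it maps gaps of $\Psi^0$ to gaps of $\Psi^0$ while preserving the order of the points inside them. Hence the order is the same in every gap of a given fiber, and the map $\omega\mapsto\sigma_\omega$, with values in a finite set, is $g$-invariant. By ergodicity it is a.e.\ constant, equal to some $\sigma$. Relabel so that $\sigma$ is the identity. Then for a.e.\ $\omega$ and every $i$,
\[
\psi^0_i<\psi^1_i<\dots<\psi^{K-1}_i<\psi^0_{i+1}
\]
in the cyclic order of the $i$-th gap.

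\textbf{Step 3: the coupling estimate.} Couple $\mu^r$ and $\mu^{r+1}$ by the plan $\int\frac1k\sum_i\delta_{(\omega,\psi^r_i(\omega))}\otimes\delta_{(\omega,\psi^{r+1}_i(\omega))}\,dm$. Its marginals are correct, and its cost is at most $\int\frac1k\sum_i|(\psi^r_i,\psi^{r+1}_i)|\,dm$, the length of the clockwise arc. For the last term $r=K-1$, pair $\psi^{K-1}_i$ with $\psi^0_{i+1}$; this is still a valid coupling. Summing over $r$, the arcs tile each fiber exactly once, giving
\[
\sum_{r=0}^{K-1}W_1(\mu^r,\mu^{r+1})\le \frac1k\,|\Sone|.
\]
So $L=|\Sone|/pq$, or simply $L=|\Sone|$, works.

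\textbf{Main obstacle.} The delicate point is Step 2. One must make sure that the labelling $\psi^r_i$ is measurable and consistent across $\omega$, and that the interlacing statement holds simultaneously for all $K$ graphs on a common full-measure set. This requires applying the second part of Theorem~\ref{thm:classification}(ii) with base graph $\psi^0$ rather than $\varphi$, which the proof of that theorem permits.
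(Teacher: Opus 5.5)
Your strategy is the paper's: reference graph $\psi^0$, interlacing from Theorem~\ref{thm:classification}(ii), gap-wise couplings, and arcs telescoping to $\ell(\Sone)/pq$. Steps 1 and 3 are fine.

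\textbf{The gap.} It lies in Step 2, at the sentence ``Hence the order is the same in every gap of a given fiber.'' What you have shown is only that $g_\omega$ carries the $i$-th gap over $\omega$ onto \emph{some} gap $\tau_\omega(i)$ over $g\omega$, with the same internal order. That says nothing about two different gaps over the same fiber. Since $\tau_\omega(1)$ need not be $1$, your $\sigma_\omega$ (read off the first gap) is not $g$-invariant without uniformity across gaps, so ergodicity of $m$ cannot be invoked.

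The inference genuinely fails from the ingredients you list. Take $\Omega$ a point, $g_\omega=\mathrm{id}$, and in $\mathbb{R}/\mathbb{Z}$ set $\Psi^0=\{0,\tfrac12\}$, $\Psi^1=\{\tfrac1{10},\tfrac35\}$, $\Psi^2=\{\tfrac15,\tfrac{11}{20}\}$. Every set is invariant, gaps go to gaps preserving order, each gap of $\Psi^0$ contains exactly one point of each $\Psi^r$, and $m$ is ergodic. Yet the order is $\Psi^1<\Psi^2$ in $[0,\tfrac12)$ but $\Psi^2<\Psi^1$ in $[\tfrac12,1)$. With such non-uniform orders, the arcs from $\psi^r_i$ to $\psi^{r+1}_i$ in Step 3 no longer tile the gaps, and the telescoping breaks down. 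This does not contradict the proposition, because these sets violate (MIG1). It shows that (MIG1) is exactly the input your argument never uses.

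\textbf{The fix.} For each permutation $\sigma$, consider the set of points $\psi^0_i(\omega)$ whose gap to the right displays the order $\sigma$. It is a measurable $G$-invariant subgraph of $\Psi^0$. By (MIG1) applied to $\psi^0$, it is a.e.\ either empty or all of $\Psi^0_\omega$. Equivalently, the order function on $\operatorname{graph}(\psi^0)$ is $G$-invariant, and $\mu^0=\mu_{\psi^0}$ is ergodic by Proposition~\ref{prop:pq-measure}.

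This is precisely how the paper argues: it uses the indicators $\chi_{r,s}$ on the reference graph and ergodicity of $\mu^0$, not of $m$. Once this is inserted, $\sigma_\omega$ is independent of the gap and a.e.\ constant, and the rest of your proof goes through. The measurability of the labelling, which you flag as the main obstacle, is comparatively routine.
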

\begin{proof}
    Let $\mu^0,\ldots,\mu^{K-1}\in\cM_m^{\mathrm{erg}}(G)$ be a finite family of distinct measures. By Theorem~\ref{thm:classification}, for each $r=0,\ldots,K-1$ there exists an invariant $(p,q)$-graph $\psi^r$ such that $\mu^r=\mu_{\psi^r}$. We take $\psi^0$ as the reference graph and, to simplify notation, write $\varphi^0:=\psi^0$. For $m$-a.e. $\omega$, we reorder (if necessary) the elements in $\Phi_{\omega}^0$ so they are in positive cyclic order and write it as 
    $$
    \Phi_{\omega}^0=\{\phi_1(\omega),\dots,\phi_N(\omega)\},
    $$
    where $N:=pq$ and $\phi_{N+1}=\phi_1$.

    If $\psi^r$ is a different invariant $(p,q)$-graph then, by Theorem~\ref{thm:classification}, for $m$-a.e. $\omega$, each interval $(\phi_j(\omega),\phi_{j+1}(\omega))$ contains exactly one point of $\Psi_\omega^r$. Denote this point by $z_j^r(\omega)$.

    Since the measures $\mu^0,\ldots,\mu^{K-1}$ are distinct, the corresponding graphs are distinct. Hence, by Lemma~\ref{lem:irreducible-graphs}, for $r\neq s$, $\Psi_\omega^r\cap\Psi_\omega^s=\emptyset$ for $m$-a.e. $\omega$. In particular, $z_j^r(\omega)\neq z_j^s(\omega)$ for $m$-a.e. $\omega$ and every $j$.

    For $r,s\in\{1,\ldots,K-1\}$, define on the reference graph $\Phi^0$ the function
    $$
    \chi_{r,s}(\omega,\phi_j(\omega))
    =
    \begin{cases}
    1,&\text{if }z_j^r(\omega)<z_j^s(\omega)
    \text{ in }(\phi_j(\omega),\phi_{j+1}(\omega)),\\
    0,&\text{otherwise}.
    \end{cases}
    $$
    Since the graphs $\varphi^0$, $\psi^r$ and $\psi^s$ are invariant,
    $g_\omega$ maps the reference interval containing $z_j^r(\omega)$ and
    $z_j^s(\omega)$ onto the corresponding interval over $g\omega$. Moreover, it maps these two points to the corresponding points of $\psi^r$ and $\psi^s$. Since $g_\omega$ preserves orientation, their relative order is also preserved and, hence, $\chi_{r,s}\circ G=\chi_{r,s}$ on $\Phi^0$. Since the graph measure $\mu^0$ is ergodic,
    $\chi_{r,s}$ is constant $\mu^0$-almost everywhere. This implies, since $(\mu^0)_\omega=\frac1N\sum_{j=1}^N\delta_{\phi_j(\omega)}$, that there exists $c_{r,s}\in\{0,1\}$ such that, for $m$-almost every $\omega$ and every $j=1,\ldots,N$, $\chi_{r,s}(\omega,\phi_j(\omega))=c_{r,s}$. In particular, for each pair $r,s$, we must have either $z_j^r(\omega)<z_j^s(\omega)$ for $m$-almost every $\omega$ and every $j$, or $z_j^s(\omega)<z_j^r(\omega)$ for $m$-almost every $\omega$ and every $j$. Since there are only finitely many pairs $r,s$, after intersecting the corresponding full-measure sets and relabeling the graphs, we may assume that, for $m$-almost every $\omega$ and every $j$,
    $$
    \phi_j(\omega)<z_j^1(\omega)<\cdots<z_j^{K-1}(\omega)<\phi_{j+1}(\omega).
    $$
    Set $z_j^0(\omega):=\phi_j(\omega)$ and $z_j^K(\omega):=\phi_{j+1}(\omega)$, and notice that $\mu^K=\mu^0$. 

    For $r=0,\ldots,K-1$, we will construct a coupling between $\mu^r$ and $\mu^{r+1}$, where we set $\mu^K:=\mu^0$.
    Recall that $z_j^0(\omega):=\phi_j(\omega)$ and $z_j^K(\omega):=\phi_{j+1}(\omega)$.

    Notice that the fiber measure determined by the points
    $z_1^K(\omega),\ldots,z_N^K(\omega)$ is precisely
    $(\mu^0)_\omega$, since
    $$
    \{\phi_{j+1}(\omega):1\leq j\leq N\}
    =
    \{\phi_j(\omega):1\leq j\leq N\}.
    $$
    Thus the notation $\mu^K=\mu^0$ simply closes the cyclic order.

    For $r=0,\ldots,K-1$, consider
    $$
    \pi^r:= \int_\Omega \frac1N \sum_{j=1}^N \delta_{\bigl((\omega,z_j^r(\omega)),(\omega,z_j^{r+1}(\omega))\bigr)}\,dm(\omega).
    $$
    By construction, the first marginal of $\pi^r$ is $\mu^r$ and the
    second marginal is $\mu^{r+1}$, so $\pi^r$ is a coupling of these two
    measures. On the fiber over $\omega$, this couples the point
    $z_j^r(\omega)$ with $z_j^{r+1}(\omega)$ for every $j=1,\ldots,N$.

    Since $W_1(\mu^r,\mu^{r+1})$ is the infimum of the integral of the
    distance over all couplings, we have
    $$
    \begin{aligned}
    W_1(\mu^r,\mu^{r+1})
    &\leq\int d\bigl((\omega,t),(\omega',t')\bigr)\,d\pi^r\bigl((\omega,t),(\omega',t')\bigr)\\
    &=\frac1N \int_\Omega \sum_{j=1}^N d\left((\omega,z_j^r(\omega)),(\omega,z_j^{r+1}(\omega))\right)\,dm(\omega).
    \end{aligned}
    $$

    Let $\ell([t,t'])$ denote the length of the positively oriented arc
    from $t$ to $t'$ in $\Sone$. By the ordering established above, for
    $m$-almost every $\omega$ and every $j$, $[z_j^r(\omega),z_j^{r+1}(\omega)]
    \subset [\phi_j(\omega),\phi_{j+1}(\omega)]$. Since the two points have the same base coordinate and $\Sone$ is equipped with its geodesic distance, $d\bigl((\omega,t),(\omega,t')\bigr)\leq\ell([t,t'])$.
    
    Taking $t=z_j^r(\omega)$, and $t'=z_j^{r+1}(\omega)$, we obtain
    $$
    d\left((\omega,z_j^r(\omega)),(\omega,z_j^{r+1}(\omega))\right)
    \leq
    \ell\left([z_j^r(\omega),z_j^{r+1}(\omega)]\right).
    $$
    Consequently,
    $$
    W_1(\mu^r,\mu^{r+1})
    \leq
    \frac{1}{N}\int_\Omega
    \sum_{j=1}^N \ell\left([z_j^r(\omega),z_j^{r+1}(\omega)]\right)\,dm(\omega),
    $$
    and summing over $r=0,\ldots,K-1$ yields
    $$
    \sum_{r=0}^{K-1}W_1(\mu^r,\mu^{r+1})\leq\frac{1}{N}\int_\Omega\sum_{j=1}^N\sum_{r=0}^{K-1}\ell\left([z_j^r(\omega),z_j^{r+1}(\omega)]\right)\,dm(\omega).
    $$

    As we observed, for each fixed $\omega$ and $j$, the points $z_j^0(\omega),\ldots,z_j^K(\omega)$ are all ordered inside $[\phi_j(\omega),\phi_{j+1}(\omega)]$. Therefore
    $$
    \sum_{r=0}^{K-1} \ell\left([z_j^r(\omega),z_j^{r+1}(\omega)]\right)=\ell\left([\phi_j(\omega),\phi_{j+1}(\omega)]\right),
    $$
    and summing over $j$ gives
    $$
    \sum_{j=1}^N\ell\left([\phi_j(\omega),\phi_{j+1}(\omega)]\right)=\ell(\Sone).
    $$
    Putting everything together,
    $$
    \begin{aligned}
    \sum_{r=0}^{K-1}W_1(\mu^r,\mu^{r+1})&\leq\frac{1}{N}\int_\Omega\ell(\Sone)\,dm(\omega)\\
    &=
    \frac{1}{N}\ell(\Sone).
    \end{aligned}
    $$
    Setting $L:=\frac{1}{N}\ell(\Sone)$, we obtain $\sum_{r=0}^{K-1}W_1(\mu^r,\mu^{r+1})\leq L$.
\end{proof}

    For a subset $E\subset\Prob(N\times\Sone)$, denote by $N_{W_1}(E,\eps)$ the smallest number of $W_1$-balls of radius $\eps$ needed to cover $E$. We have the following consequence of Proposition~\ref{prop:cyclic-length}:

    \begin{corollary}\label{cor:linear-covering}
    There exists $L>0$ such that, for every $\eps>0$, 
    $$
    N_{W_1}\left(\cM_m^{\mathrm{erg}}(G),\eps\right)
    \leq
    1+\frac{L}{\eps}.
    $$
\end{corollary}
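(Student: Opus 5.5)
We argue by cases according to Furstenberg's classification, Theorem~\ref{thm:classification}, applied to the ergodic measure $m$. If there is no invariant graph, item (i) says that $\cM_m(G)$ consists of a single measure. In that case $\cM_m^{\mathrm{erg}}(G)$ has at most one element, one ball of any radius covers it, and the bound $1+L/\eps$ holds for any $L>0$. If there is an invariant $(p,q)$-graph, we take $L$ to be the constant from Proposition~\ref{prop:cyclic-length}, namely $L=\ell(\Sone)/(pq)$. Since $\ell(\Sone)/(pq)\le \ell(\Sone)$, we may simply choose $L:=\ell(\Sone)$, which works uniformly in both cases.

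In the graph case, fix $\eps>0$. The first step is to show that every finite $\eps$-separated subset of $\cM_m^{\mathrm{erg}}(G)$ has at most $\max(1,L/\eps)$ elements. Let $\mu^0,\dots,\mu^{K-1}$ be distinct ergodic measures with $\Wone(\mu^r,\mu^s)>\eps$ for $r\ne s$, where $K\ge 2$. Proposition~\ref{prop:cyclic-length} gives a cyclic ordering with
\[
K\eps<\sum_{r=0}^{K-1}\Wone(\mu^r,\mu^{r+1})\le L ,
\]
so $K<L/\eps$.

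Next we bound the covering number. Any family of measures that is pairwise more than $\eps$ apart is therefore finite with fewer than $L/\eps$ elements, or it has at most one element. So we can take a maximal $\eps$-separated family; it exists and is finite by the bound just proved, for instance by greedy selection, which must stop. By maximality, every element of $\cM_m^{\mathrm{erg}}(G)$ lies within distance $\eps$ of some member of the family. The closed balls of radius $\eps$ centered at its members then cover the set, and there are at most $\max(1,L/\eps)\le 1+L/\eps$ of them.

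The only delicate point is the open-versus-closed ball convention in $N_{W_1}$. If the balls are open, we run the argument with separation $\eps'<\eps$, which gives at most $1+L/\eps'$ balls, and we let $\eps'\uparrow\eps$. Since the count is an integer, the strict inequality $K<L/\eps$, which we already have, lets us absorb this limit. Alternatively we can just replace $L$ by $2L$. Either way there is no real obstacle: the content is entirely in Proposition~\ref{prop:cyclic-length}, and the corollary is the standard packing-to-covering conversion.
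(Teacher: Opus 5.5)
Your proof is correct and follows the paper's argument. You dispose of the no-graph case via Theorem~\ref{thm:classification}(i), bound the size of an $\eps$-separated family by the cyclic sum of Proposition~\ref{prop:cyclic-length}, and then convert packing to covering with a greedy maximal family. The differences are cosmetic: you keep the closing term of the cyclic sum, getting $K\eps<L$ rather than the paper's $(K-1)\eps\le L$, and your use of strict separation is what forces the open/closed-ball detour. Taking separation $\ge\eps$, as the paper does, gives the open-ball cover directly.
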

\begin{proof}
If the system has no invariant graph, then by Theorem~\ref{thm:classification} the set $\cM_m^{\mathrm{erg}}(G)$ consists of a single measure, and the result is immediate.

Now suppose that the system has an invariant $(p,q)$-graph and let $\mu^0,\ldots,\mu^{K-1}\in\cM_m^{\mathrm{erg}}(G)$ be distinct and $\eps$-separated measures. By Proposition~\ref{prop:cyclic-length}, after cyclically ordering them, we have
$$
\sum_{r=0}^{K-1}
W_1(\mu^r,\mu^{r+1})
\leq L,
\qquad
\mu^K=\mu^0.
$$
Since the family is $\eps$-separated, $W_1(\mu^r,\mu^{r+1})\geq\eps$ for every $r=0,\ldots,K-2$. Hence
$$
(K-1)\eps
\leq
\sum_{r=0}^{K-2}
W_1(\mu^r,\mu^{r+1})
\leq
L,
$$
i.e., $K\leq1+\frac{L}{\eps}$.

Starting from any measure in $\cM_m^{\mathrm{erg}}(G)$, choose
successively a new measure at $W_1$-distance at least $\eps$ from
all the previously chosen ones, whenever this is possible. The previous
estimate shows that this procedure must stop after a finite number of steps.
By maximality, every measure in $\cM_{m}^{\mathrm{erg}}(G)$ is at distance strictly less than $\eps$ from one of the chosen measures. Therefore, the $\eps$-balls centered at the chosen measures cover $\cM_m^{\mathrm{erg}}(G)$.

This proves that $N_{W_1}\left(\cM_m^{\mathrm{erg}}(G),\eps\right)
\leq 1+\frac{L}{\eps}$.
\end{proof}

%----------------------------------------------------------------------
%----------------------------------------------------------------------
\section{\texorpdfstring{$u$}{u}-Gibbs measures}
%----------------------------------------------------------------------
%----------------------------------------------------------------------

    Recall that $f$ is Axiom A. Then, by the spectral decomposition $\Omega(f)=\Lambda_1\cup \dots \cup \Lambda_k$, where each $\Lambda_i$ is an isolated hyperbolic set. By \cite{Rue76}, there exists a finite number of ergodic SRB measures, $m_{f,1},\dots, m_{f,\ell}$, such that their basins cover a total Lebesgue measure set, moreover, each of these measures is concentrated in one basic piece that is an attractor.
	We denote by $\Leb$ the normalized Lebesgue measure on
	$N\times\Sone$.

For each $1\leq i\leq \ell$, let $B(m_{f,i})$ be the basin of $m_{f,i}$, and define $B_i=\pi^{-1}B(m_{f,i})$. By the absolute continuity of $\pi$, we have that $\cup_{i=1}^\ell B_i$ has total Lebesgue measure.
	\begin{lemma}\label{lem:base-marginal}
		For $z=(x,\theta)\in B_i$, every accumulation point $\mu$ of
		$(\emp_n^F(z))$ satisfies
		\[
		\pi_*\mu=m_{f,i}.
		\]
        In particular, for Lebesgue almost every point $z\in N\times\Sone$, there exists $1\leq i\leq\ell$ such that every accumulation point of $(\emp_n^F(z))_{n\geq1}$ projects to $m_{f,i}$.
	\end{lemma}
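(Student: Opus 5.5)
The plan is to use that $\pi$ semiconjugates $F$ to $f$, i.e. $\pi\circ F=f\circ\pi$, so that pushforward by $\pi$ commutes with taking empirical measures. Concretely, for $z=(x,\theta)$ and every $n\geq 1$,
$$
\pi_*\emp_n^F(z)=\frac1n\sum_{j=0}^{n-1}\delta_{\pi(F^j(z))}=\frac1n\sum_{j=0}^{n-1}\delta_{f^j(x)}=\emp_n^f(x).
$$
This identity is the only dynamical input. Everything else is continuity of $\pi_*$ and the definition of the basin.

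The steps are as follows.

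\begin{enumerate}
\item Observe that $\pi_*\colon\Prob(N\times\Sone)\to\Prob(N)$ is continuous for the weak-$*$ topology. Indeed, for $\varphi\in C(N)$ we have $\int\varphi\,d\pi_*\nu=\int\varphi\circ\pi\,d\nu$, and $\varphi\circ\pi$ is continuous.
\item Let $\mu$ be an accumulation point of $(\emp_n^F(z))$, with $\emp_{n_k}^F(z)\weak\mu$. By continuity, $\pi_*\emp_{n_k}^F(z)\weak\pi_*\mu$, and by the identity above this sequence equals $\emp_{n_k}^f(x)$.
\item Since $z\in B_i$ means $x\in B(m_{f,i})$, the full sequence $\emp_n^f(x)$ converges to $m_{f,i}$. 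Hence so does the subsequence $\emp^f_{n_k}(x)$, and uniqueness of weak-$*$ limits gives $\pi_*\mu=m_{f,i}$.
\end{enumerate}

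For the ``in particular'' part, I will use that $\bigcup_{i=1}^\ell B_i$ has full Lebesgue measure, as noted before the lemma. This follows from $\Leb(\pi^{-1}A)$ being the product of the normalized volume of $A$ with that of $\Sone$, together with Ruelle's result that the basins $B(m_{f,i})$ cover a full-volume subset of $N$. Thus Lebesgue-a.e.\ $z$ lies in some $B_i$, and the first part applies to it.

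There is no real obstacle here. The only points needing care are that the basins $B_i$ are defined through convergence of the full sequence $\emp_n^f(x)$, so that every subsequence inherits the limit, and that the full-measure claim relies on the product structure of $\Leb$.
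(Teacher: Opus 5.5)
Your proposal is correct and follows essentially the same route as the paper: the identity $\pi_*\emp_n^F(x,\theta)=\emp_n^f(x)$, continuity of $\pi_*$ applied along a convergent subsequence, and then the fact that $\bigcup_i B_i$ has full Lebesgue measure. Your Fubini/product-volume justification of that last fact is just a more explicit version of the paper's appeal to the absolute continuity of $\pi$.
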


	\begin{proof}  
       Take $z\in B_i$, we have
		\[
		\pi_*\emp_n^F(x,\theta)
		=
		\emp_n^f(x)
		\wto
		m_{f,i}.
		\]
		If $\emp_{n_k}^F(x,\theta)\wto\mu$, continuity of pushforward by $\pi$ gives
		$\pi_*\mu=m_{f,i}$.

        As the union of $B_i$ has total Lebesgue measure, we have that for Lebesgue almost every point $z\in N\times \Sone$, there exists $1\leq i \leq \ell$ such that every accumulation point of $(\emp_n^F(z))_{n\geq1}$ projects to $m_{f,i}$.
	\end{proof}

From now on, we fix $i$ and work with points in $B_i$, so for simplicity we omit the index $i$, and we call the SRB of $f$ just by $m$.

\begin{remark}\label{rmk:lambda-ph}
    Let $\Lambda:=\supp(m)$ be the attracting basic piece supporting $m$. Observe that if $U$ is a trapping region of $\Lambda$, $U\times \Sone$ is a trapping region of $\Lambda\times\Sone$, so $\Lambda\times\Sone$ is a partially hyperbolic attracting set for $F$. Moreover, $B(m)\times\Sone$ is contained in the basin of $\Lambda\times\Sone$.
\end{remark}

\begin{remark}\label{rmk:acc-u-gibbs}
Since $F$ is $C^{1+\alpha}$, Theorem~C of \cite{CYZ2020} implies that, for Lebesgue-a.e. point in the basin of $\Lambda\times\Sone$, every accumulation point of the sequence of its empirical measures is a $u$-Gibbs measure whose ergodic components of such a measure are also $u$-Gibbs measures \cite[Corollary~2.15]{CYZ2020}.
\end{remark}

\subsection*{Holonomies and invariance principle}

Let $\Lambda\subset N$ be a basic piece of $f$. For $x\in \Lambda$ we denote by $W_{loc}^s(x)$ the local stable manifold of $x$ with respect to $f$ and $W^s(x)$ the global stable manifold; analogously we denote the unstable ones by changing $s$ to $u$.

Given $p\in \Lambda$ and $x,y\in W_{loc}^s(p)$ we can define the \emph{stable holonomy} $h^s_{x,y}=\lim_{n\to \infty}{f^n_y}^{-1}\circ f^n_x$, by \cite{Avila-Viana-IP} this limit exists and satisfies:
\begin{itemize}
    \item $f_y h^s_{x,y}=h^s_{f(x),f(y)}f_x$,
    \item for $x,y,z\in W^s_{loc}(p)$, $h^s_{x,y}=h^s_{z,y}h^s_{x,z}$
    \item $d(h^s_{x,y},Id)\leq C d(x,y)$,
    \item $\{(x,y,t)\in N\times N\times \Sone;x,y\in W^s_{loc}(z)\text{ for some }z\in \Lambda\}\mapsto h^s_{x,y}(t)$ is uniformly continuous.
\end{itemize}
Analogously, we define the \emph{unstable holonomy} for points $y\in W^u_{loc}(x)$ interchanging $n$ by $-n$.

Given a measure $\mu$ of $N\times \Sone$, that projects to $m$ on $N$, we define the \emph{center disintegration} as the disintegration $x\mapsto \mu_x$ given by the partition $\{\{x\}\times \Sone;x\in N\}$.

We say that its center disintegration is \emph{$s$-invariant} if there exists a set $M^s\subset N$ of full $m$-measure, such that for every $x,y\in M^s$, with $y\in W^s_{loc}(x)$, we have ${h^s_{x,y}}_*\mu_x=\mu_y$.
Analogously, we say that the disintegration is \emph{$u$-invariant} if the same is true interchanging $s$ with $u$. 

Recall that the disintegration is defined for $m$-almost every $x\in \Lambda$. We say that it has a continuous extension on $\Lambda$ if there exists a continuous function $\Lambda\ni x\mapsto \mu_x$, with respect to the weak-$*$ topology, that coincides with the disintegration for $m$-almost every $x$. We say that the continuous extension of the center disintegration of $\mu$ is $su$-invariant if the disintegration is both $s$- and $u$-invariant with $M^s=M^u=\Lambda$.

The following theorem is a particular case of Avila-Viana invariance principle \cite{Avila-Viana-IP}.

\begin{theorem}\label{thm:invariance-principle}
    Let $\mu$ be an ergodic $u$-Gibbs measure; then its center disintegration is $u$-invariant. Moreover, if $\lambda^c(\mu)\geq 0$, then its center disintegration has a continuous extension on $\Lambda$ that is $su$-invariant.
\end{theorem}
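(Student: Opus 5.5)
The plan is to reduce the statement to the Avila--Viana invariance principle for linear and fiber-bundle cocycles over hyperbolic systems, as in \cite{Avila-Viana-IP}. The setting fits directly. The base $f|_\Lambda$ is a hyperbolic homeomorphism with local product structure. The measure $m$ is the SRB measure, so it is an equilibrium state of a H\"older potential and has local product structure. The fiber maps $f_x$ are $C^{1+\alpha}$ circle diffeomorphisms depending H\"older continuously on $x$, and the domination condition gives the stable and unstable holonomies $h^s,h^u$ listed above.

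\emph{First claim: $u$-invariance.} Let $\mu$ be an ergodic $u$-Gibbs measure. Its conditionals along strong unstable leaves are absolutely continuous with positive densities. These densities are given by the usual ratio of unstable Jacobians and are therefore determined by the dynamics. Since $W^u_F(z)$ projects to $W^u_{loc}(x)$ and is the graph of the unstable holonomy, I would disintegrate $\mu$ over a local unstable product box in two steps: first along the center circles, then along the unstable plaques. The product structure of $m$ along unstable plaques, combined with the absolute continuity of the conditionals, should then show that transporting $\mu_x$ by $h^u_{x,y}$ gives $\mu_y$ for $m$-a.e.\ $x,y$ in the same unstable plaque. Concretely, one compares the $F^{-n}$-pullbacks, which contract unstable plaques while the center maps are bounded-distorted. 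One uses that $\mu_x = (f^n_{f^{-n}x})_*\mu_{f^{-n}x}$ and that $h^u$ is the limit of the corresponding compositions.

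\emph{Second claim, for $\lambda^c(\mu)\ge 0$.} Here I would apply the invariance principle to $F^{-1}$. For $F^{-1}$, the fiber exponent is $-\lambda^c(\mu)\le 0$, so Avila--Viana's theorem (the version for smooth fiber bundles with exponent $\le 0$) gives $s$-invariance of the center disintegration. Alternatively, I would first check that the fiber exponent vanishes or is positive and use the entropy-based argument: the fiber entropy is bounded by $\max(0,\lambda^c)$ considerations, which forces the stable holonomy invariance. Having both $s$- and $u$-invariance on full-measure sets, the local product structure of $m$ lets me define $\mu_x$ for every $x\in\Lambda$. For $x$ near a typical point $p$, I set $\mu_x:=(h^u_{[x,p],x}\circ h^s_{p,[x,p]})_*\mu_p$. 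This is well defined by the cocycle identities for $h^s,h^u$ and consistency on full-measure sets. It is continuous by the uniform continuity of the holonomies. It is then extended by density of typical points and invariance under $F$, using that $\supp m=\Lambda$. This continuous extension is automatically $su$-invariant everywhere.

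\emph{Main obstacle.} I expect the main obstacle to be the $s$-invariance when $\lambda^c(\mu)\ge 0$. The invariance principle is formulated with the exponent condition on the side opposite to the holonomy one wants to obtain, so one must carefully verify that the theorem applies to $F^{-1}$ with exponent $-\lambda^c\le0$. The $u$-Gibbs property also has to be used correctly to supply the missing measurability and entropy bound (the Ledrappier-type criterion, $h_\mu(F)=h_m(f)$). I would derive the entropy equality from the $u$-Gibbs property via Ledrappier--Young: the entropy of $\mu$ equals the sum of the positive strong-unstable exponents, which is $h_m(f)$, with no center contribution since $\lambda^c\ge0$ does not enter the strong unstable direction. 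I would then invoke the entropy version of the invariance principle for $F^{-1}$.
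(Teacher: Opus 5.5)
\textbf{The second claim is fine; the gap is in the first claim ($u$-invariance).}

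Your second claim is correct and is the paper's argument. The paper applies Avila--Viana to $F$ directly, where $\lambda^c(\mu)\ge0$ gives $s$-invariance. You apply the ``$\lambda_+\le0\Rightarrow u$-invariance'' half to $F^{-1}$ over $f^{-1}$, which is the same statement with time reversed. The continuous $su$-invariant extension via the local product structure of $m$ is also what the paper does.

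The first claim is exactly where the $u$-Gibbs hypothesis must enter; the paper takes it from Tahzibi--Yang. The mechanism you call ``concrete'' uses $\mu_x=(f^n_{f^{-n}x})_*\mu_{f^{-n}x}$ and $h^u_{x,y}=\lim_n f^n_{f^{-n}y}\circ(f^n_{f^{-n}x})^{-1}$. This only reduces $(h^u_{x,y})_*\mu_x=\mu_y$ to comparing $\mu_{f^{-n}x}$ with $\mu_{f^{-n}y}$ after pushing both forward by $f^n_{f^{-n}y}$, and nothing controls that comparison:
\begin{itemize}
\item $x\mapsto\mu_x$ is merely measurable, so closeness of $f^{-n}x$ and $f^{-n}y$ says nothing about their conditionals.
\item Bounded distortion of the center maps is irrelevant here.
\item The argument never uses absolute continuity, so it would apply verbatim to every invariant measure projecting to $m$. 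That conclusion is false. Combined with Avila--Viana, it would make every graph measure with $\lambda^c>0$ have a continuous $su$-invariant atomic disintegration, which is impossible for accessible $F$.
\end{itemize}
Your earlier sentence (``the product structure of $m$ \dots\ combined with the absolute continuity \dots\ should then show'') names the right ingredients but not the step that connects them.

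\textbf{The missing step: comparing Jacobians.} Since $\ker D\pi=E^c$, $D\pi(z)$ maps $E^u_F(z)$ isomorphically onto $E^u_f(\pi z)$. Together with $\pi\circ F=f\circ\pi$ this gives $J^uF(z)\,J\pi(Fz)=J^uf(\pi z)\,J\pi(z)$, where $J\pi(z):=\bigl|\det\bigl(D\pi(z)|_{E^u_F(z)}\bigr)\bigr|$. Insert this into the Pesin--Sinai product $\rho(z)/\rho(z')=\prod_{n\ge1}J^uF(F^{-n}z')/J^uF(F^{-n}z)$ for the density of $\mu$ along a strong unstable plaque. The $J\pi$ factors telescope, giving $\rho\propto(\rho_0\circ\pi)\,J\pi$, where $\rho_0$ is the density of the conditional $m^u_w$ of $m$ on $W^u_{loc}(w)$. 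Hence $\pi$ sends the conditional of $\mu$ on \emph{every} strong unstable plaque over $W^u_{loc}(w)$ to the same measure $m^u_w$.

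These plaques are the graphs $y\mapsto h^u_{w,y}(\theta)$. So the conditional of $\mu$ on $\pi^{-1}(W^u_{loc}(w))$ is the image of $m^u_w\times\hat\mu_w$ under $(y,\theta)\mapsto(y,h^u_{w,y}(\theta))$. Here $\hat\mu_w$ is the quotient measure on plaques, parametrized by $\theta$ in the fiber over $w$. Disintegrating over $y$ gives $\mu_y=(h^u_{w,y})_*\hat\mu_w$, which is $u$-invariance. With this step supplied, your route becomes a self-contained and more elementary substitute for the entropy-based criterion of Tahzibi--Yang that the paper cites.

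\textbf{The ``main obstacle'' paragraph is misdirected.} Avila--Viana needs only the sign of $\lambda^c$, so no entropy input is required for $s$-invariance. Moreover, $h_\mu(F)=h_m(f)$ does not follow from Ledrappier--Young plus the $u$-Gibbs property; for $\lambda^c>0$ the center could a priori contribute. It holds for every $\mu$ projecting to $m$ because the fiber entropy of a skew-product of circle homeomorphisms vanishes (Abramov--Rokhlin).
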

\begin{proof}
    By \cite{Tahzibi-Yang} every $u$-Gibbs has a $u$-invariant center disintegration, see also \cite[proposition~2.28]{Obata2023}. Also if $\lambda^c(\mu)\geq 0$, by \cite{Avila-Viana-IP} the disintegration is also $s$-invariant, moreover as $m$ has local product structure, $x\mapsto \mu_x$ has a continuous extension to $\supp(m)=\Lambda$.
\end{proof}
\begin{remark}
    In \cite{Tahzibi-Yang} and \cite{Obata2023} the base dynamics is an Anosov diffeomorphism $C^2$. The regularity hypothesis was only used to apply Ledrappier-Young \cite{LY85b}, but this was extended to $C^{1+}$ regularity in \cite{Brown2022StableHolonomies}. 
    
    Observe that, in our setting, $\Lambda$ is a hyperbolic attractor and therefore forms a $u$-lamination: every unstable manifold is contained in $\Lambda$. Consequently, the proof of \cite{Tahzibi-Yang} applies without modification when the base dynamics is given by $f \colon \Lambda \to \Lambda$.
\end{remark}
%----------------------------------------------------------------------
%----------------------------------------------------------------------
\subsection*{\texorpdfstring{$u$}{u}-Gibbs classification}
%----------------------------------------------------------------------
%----------------------------------------------------------------------

A $(p,q)$-graph $\varphi$ is said to be \emph{continuous} if $\Lambda\ni x\mapsto \Phi_x$ is continuous in the Hausdorff topology. It is said to be \emph{$s$-invariant} if for each $y\in W^s(x)\cap \Lambda$, $h^s_{x,y}(\Phi_x)=\Phi_y$. Analogously, we say that it is \emph{$u$-invariant} if the same is true, changing stable by unstable. If it is both $s$- and $u$-invariant, we say that it is $su$-invariant. We say that a sequence of continuous $(p,q)$-graphs $\varphi_n$ converges uniformly to $\varphi$ if $\sup_{x\in\Lambda} d_H(\Phi^n_x,\Phi_x)\to 0$.

By continuity of the stable and unstable holonomies with respect to base points, we have that every $su$-invariant graph is continuous on $\Lambda$.

\begin{proposition}\label{prop:su-invariant-pq-graphs}
    Let us assume that there exists more than one $u$-Gibbs measure that projects to $m$. Let $\mu$ be an ergodic $u$-Gibbs such that $\pi_*\mu=m$ and $\lambda^c(\mu)\geq 0$. Then there exists a continuous $F$-invariant, $su$-invariant $(p,q)$-graph $\varphi:\Lambda \to (\Sone)^{pq}_*$ such that $\mu=\mu_\varphi$.
\end{proposition}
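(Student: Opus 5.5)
The plan is to combine Furstenberg's classification (Theorem~\ref{thm:classification}) with the invariance principle (Theorem~\ref{thm:invariance-principle}). First we show that $\mu$ is a measurable graph measure. Then we use the continuous $su$-invariant extension of its center disintegration to upgrade the measurable graph to a continuous $su$-invariant one.

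\emph{Step 1: $\mu$ comes from a measurable graph.} Apply Theorem~\ref{thm:classification} to $G=F|_{\Lambda\times\Sone}$ over $(f|_\Lambda,m)$. If there were no invariant graph, $\cM_m(F)$ would be a single measure. This contradicts the hypothesis that there are two distinct $u$-Gibbs measures projecting to $m$, both of which lie in $\cM_m(F)$. Hence some invariant $(p,q)$-graph exists. Since $\mu\in\cM_m^{\mathrm{erg}}(F)$, Theorem~\ref{thm:classification}(ii) gives a measurable invariant $(p,q)$-graph $\psi$ with $\mu=\mu_\psi$. Thus for $m$-a.e.\ $x$, $\mu_x$ consists of $pq$ distinct atoms of mass $1/pq$ each.

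\emph{Step 2: the atomic structure holds at every point of $\Lambda$.} Since $\lambda^c(\mu)\ge0$, Theorem~\ref{thm:invariance-principle} gives a continuous extension $x\mapsto\mu_x$ on $\Lambda$ that is $su$-invariant with $M^s=M^u=\Lambda$. Let $\mathcal G\subset\Lambda$ be the set where $\mu_x$ has $pq$ atoms of mass $1/pq$.
\begin{itemize}
\item $\mathcal G$ has full $m$-measure, hence is dense in $\supp(m)=\Lambda$.
\item Holonomies are homeomorphisms of $\Sone$ and push $\mu_x$ to $\mu_y$, so $\mathcal G$ is saturated by local stable and unstable holonomies.
\item Given any $y\in\Lambda$, choose $x_0\in\mathcal G$ close to $y$. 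Using local product structure of the basic piece, let $w=[x_0,y]=W^u_{loc}(x_0)\cap W^s_{loc}(y)\in\Lambda$. Then $\mu_y=(h^s_{w,y}\circ h^u_{x_0,w})_*\mu_{x_0}$, so $y\in\mathcal G$.
\end{itemize}
Hence $\mathcal G=\Lambda$. Define $\Phi_x:=\supp\mu_x$ for every $x\in\Lambda$.

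\emph{Step 3: continuity, invariance, and the (MIG) conditions.} We expect the main point to be uniform Hausdorff continuity of $x\mapsto\Phi_x$, i.e.\ that atoms cannot merge or escape in the limit.
\begin{itemize}
\item \emph{Continuity.} Every $\mu_x$ consists of exactly $pq$ atoms of equal mass $1/pq$. Weak-$*$ convergence $\mu_{x_n}\to\mu_x$ therefore forces each atom of $\mu_x$ to be a limit of exactly one atom of $\mu_{x_n}$, by testing against bump functions around each atom. If two atoms merged, the limit would carry an atom of mass $\ge 2/pq$, which is excluded. This gives Hausdorff continuity at each point, and compactness of $\Lambda$ upgrades it to uniform continuity.
\item \emph{$su$-invariance.} Local $su$-invariance of $\Phi$ follows directly from $su$-invariance of $\mu_x$. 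Global invariance along $W^s(x)\cap\Lambda$ follows by iterating and using the relation $f_y h^s_{x,y}=h^s_{f(x),f(y)}f_x$.
\item \emph{$F$-invariance.} The set $\{x:(f_x)_*\mu_x=\mu_{f(x)}\}$ is closed by continuity of $x\mapsto\mu_x$ and of $F$, and it has full $m$-measure. Hence it equals $\Lambda$, giving $f_x(\Phi_x)=\Phi_{f(x)}$ everywhere.
\item \emph{(MIG1) and (MIG2).} These are a.e.\ conditions, and $\Phi_x=\Psi_x$ for $m$-a.e.\ $x$. So they transfer from $\psi$, and $\mu=\mu_\varphi$, where $\varphi$ is a labelling of $\Phi$ compatible with $\psi$ almost everywhere.
\end{itemize}
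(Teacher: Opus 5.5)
Your proposal is correct and follows the same route as the paper. Furstenberg's classification, with the two-measure hypothesis ruling out case (i), gives $\mu=\mu_\psi$; the continuous $su$-invariant extension of the center disintegration from the invariance principle then extends the $pq$-atomic structure to all of $\Lambda$ and yields continuity and $su$-invariance of the graph. Your holonomy-saturation argument via local product structure, the equal-mass argument that rules out merging atoms, and the closed-set argument for $F$-invariance at every point make explicit steps that the paper compresses into the single sentence that $\mu_x$ has $pq$ atoms for every $x$.
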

\begin{proof}
    By Theorem~\ref{thm:classification} we have that there exists $\varphi$ an invariant $pq$-graph such that $\mu=\mu_\varphi$. By Theorem~\ref{thm:invariance-principle} we have that the disintegration of $\mu$, $\mu_x$ is $su$-invariant and continuous, as $\mu_x$ is atomic for $m$-almost every $x$, with $pq$ atoms, then it is atomic for every $x$ with $pq$ atoms, so we can extend $\varphi$ everywhere in the support of $m$.  
    For $y\in W^s(x)\cap \Lambda$ 
    $$
    \begin{aligned}
	(h^s_{x,y})_*\mu_x	&=	\frac1{pq}	\sum_{i=1}^p\sum_{j=1}^q	\delta_{h^s_{x,y}(\varphi_j^i(x))} \\
    &=\mu_y \\
    &=\frac1{pq}	\sum_{i=1}^p\sum_{j=1}^q	\delta_{\varphi_j^i(y)}.
    \end{aligned}
	$$
Analogously for $y\in W^u(x)\cap \Lambda$. So $\varphi$ is $su$-invariant and continuous.
\end{proof}

Moreover, this graph is unique:

\begin{lemma}\label{lem:unique-continuous-graph}
	Let $\varphi$ and $\psi$ be continuous $(p,q)$-invariant graphs
	over $\Lambda$. Then $\mu_\varphi=\mu_\psi$ if and only if $\Phi_x=\Psi_x$ for every $x\in\Lambda$.
\end{lemma}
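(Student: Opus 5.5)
The reverse implication is immediate. If $\Phi_x=\Psi_x$ for every $x\in\Lambda$, then the fiber measures $(\mu_\varphi)_x=\frac1{pq}\sum\delta_{\varphi^i_j(x)}$ and $(\mu_\psi)_x$ coincide for every $x$. Integrating against $m$ then gives $\mu_\varphi=\mu_\psi$.

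For the direct implication, I would first show that $\Phi_x=\Psi_x$ for $m$-a.e. $x$, and then upgrade this to every $x\in\Lambda$ using continuity together with $\supp(m)=\Lambda$.

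\emph{Step 1.} Assume $\mu_\varphi=\mu_\psi=:\mu$. Both $x\mapsto(\mu_\varphi)_x$ and $x\mapsto(\mu_\psi)_x$ are disintegrations of $\mu$ along the partition $\{\{x\}\times\Sone\}$. By essential uniqueness of the Rokhlin disintegration they agree for $m$-a.e. $x$. The support of the fiber measure $(\mu_\varphi)_x$ is exactly the set $\Phi_x$, since its $pq$ points are distinct and each carries mass $1/pq$. Hence $\Phi_x=\Psi_x$ on a set $X\subset\Lambda$ of full $m$-measure. Alternatively, one can argue via Lemma~\ref{lem:irreducible-graphs}: if $\Phi_x\cap\Psi_x=\emptyset$ almost everywhere, then $\mu_\varphi(\Phi)=1$ while $\mu_\psi(\Phi)=0$, which is a contradiction.

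\emph{Step 2.} Because $m$ is the SRB measure on the basic piece $\Lambda$ and $\supp(m)=\Lambda$, the set $X$ is dense in $\Lambda$. The maps $x\mapsto\Phi_x$ and $x\mapsto\Psi_x$ are continuous into compact subsets of $\Sone$ with the Hausdorff metric. The function $x\mapsto d_H(\Phi_x,\Psi_x)$ is therefore continuous, vanishes on the dense set $X$, and so vanishes on all of $\Lambda$. Hence $\Phi_x=\Psi_x$ for every $x\in\Lambda$.

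The only mild subtlety is in Step 1. One must check that "graph" here means the continuous representative defined at every point of $\Lambda$, and that the a.e. identification of fiber measures really forces equality of the point sets. The latter follows because the atoms of $(\mu_\varphi)_x$ are distinct for a.e. $x$, so the support of the fiber measure recovers $\Phi_x$ exactly. No Hausdorff-continuity issue arises, since equality of closed sets on a dense set passes to the closure.
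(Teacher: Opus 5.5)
Your proof is correct and follows essentially the same route as the paper. Essential uniqueness of the disintegration gives $(\mu_\varphi)_x=(\mu_\psi)_x$, hence $\Phi_x=\Psi_x$, for $m$-a.e.\ $x$; density of full-measure sets in $\supp(m)=\Lambda$ together with Hausdorff continuity then upgrades this to every $x\in\Lambda$, and the converse comes from integrating the equal fiber measures (your alternative via Lemma~\ref{lem:irreducible-graphs} and your use of the continuous function $x\mapsto d_H(\Phi_x,\Psi_x)$ are harmless elaborations of this same argument).
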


\begin{proof}
	Suppose first that $\mu_\varphi=\mu_\psi$. By uniqueness of the disintegration over the base, $(\mu_\varphi)_x=(\mu_\psi)_x$ for $m$-a.e.\ $x$. Since both conditional measures are uniform probability measures on $pq$ distinct points, their supports coincide: $\Phi_x=\Psi_x$ for $m$-a.e.\ $x$.

	Since $\supp(m)=\Lambda$, every full $m$-measure subset of
	$\Lambda$ is dense. The maps $x\mapsto\Phi_x$ and
	$x\mapsto\Psi_x$ are continuous in the Hausdorff topology, hence
	the equality extends to every $x\in\Lambda$.

    On the other hand, if $\Phi_x=\Psi_x$ for every $x\in \Lambda$, then 
    $$(\mu_\varphi)_x=\frac1{pq}\sum_{t\in\Phi_x}\delta_t=\frac1{pq}\sum_{t\in\Psi_x}\delta_t=(\mu_\psi)_x,
    $$
    for every $x\in\Lambda$, and integrating w.r.t. $m$ we obtain $\mu_\varphi=\mu_\psi$.
\end{proof}

\begin{proposition}\label{prop:u-closed}
    Let $\mu_n$ be a sequence of ergodic $u$-Gibbs with $\lambda^c(\mu_n)\geq 0$, such that $\mu_n$ converges weakly-$*$ to $\mu$. Then $\mu$ is an ergodic $u$-Gibbs. Moreover, $\mu_{\psi_n}\wto \mu_\psi$ if and only if $\psi_n\to \psi$ uniformly.
\end{proposition}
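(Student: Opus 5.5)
The plan is to work throughout with the $su$-invariant continuous graphs from Proposition~\ref{prop:su-invariant-pq-graphs}. We write $\mu_n=\mu_{\psi_n}$, where each $\psi_n$ is a continuous, $F$-invariant, $su$-invariant $(p,q)$-graph over $\Lambda$. By Theorem~\ref{thm:classification}(ii), all these graphs share the same integers $p,q$, since they are invariant graphs for the same base measure $m$.

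If there is only one $u$-Gibbs measure projecting to $m$, the sequence is constant and the claims are trivial. So we assume there is more than one.

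\textbf{Step 1: a limit graph, after passing to a subsequence.}
\begin{enumerate}[label=(\alph*)]
\item The key tool is that $su$-invariance gives uniform control. Fix a base point $x_0\in\Lambda$.
\item By local product structure, every $y\in\Lambda$ near $x_0$ is reached by an $s$-path followed by a $u$-path inside $\Lambda$.
\item Since $\Lambda$ is a transitive basic piece, finitely many such charts together with the dynamics cover $\Lambda$.
\item Hence $\Psi^n_y=H_{x_0,y}(\Psi^n_{x_0})$, where $H_{x_0,y}$ is a composition of holonomies. This composition is independent of $n$, continuous in $y$, and is a homeomorphism of $\Sone$. One may have to choose paths; the choice is consistent because the graph is $su$-invariant.
\item The sets $\Psi^n_{x_0}$ are $pq$-point sets in a compact space, so a subsequence converges in the Hausdorff metric to a set $\Psi_{x_0}$ with at most $pq$ points.
\item Define $\Psi_y=H_{x_0,y}(\Psi_{x_0})$. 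Then $\Psi^n\to\Psi$ uniformly, by uniform continuity of the holonomies.
\item The limit is again $F$-invariant and $su$-invariant.
\end{enumerate}

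\textbf{Step 2: no collapse of points, which is the main obstacle.} We must show that $\Psi_{x_0}$ still has $pq$ distinct points, so that $\mu_\psi$ is a genuine graph measure. I would argue by contradiction using the interlacing statement in Theorem~\ref{thm:classification}(ii).
\begin{enumerate}[label=(\alph*)]
\item Fix a reference graph $\varphi$, for instance $\psi_1$.
\item For $n$ with $\psi_n\neq\varphi$, each gap $[\phi_j(x),\phi_{j+1}(x))$ contains exactly one point of $\Psi^n_x$. This holds $m$-a.e., hence for every $x\in\Lambda$ by continuity.
\item So distinct points of $\Psi^n_x$ lie in distinct gaps of $\Phi_x$, or one of them lies on the common endpoint.
\item In the limit, two points could only merge at some $\phi_j(x)$, as one point tending to $\phi_{j+1}$ from the left and the next equal to $\phi_{j+1}$.
\item This is excluded because the point in $[\phi_{j+1},\phi_{j+2})$ sits at a position governed by an $F$-invariant relative order, as in the proof of Proposition~\ref{prop:cyclic-length}.
\item If this is too delicate, a cleaner route is the following. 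If two points of $\Psi_x$ coincide for one $x$, they coincide for all $x$, because the holonomies are bijections. The resulting set is an invariant graph with fewer than $pq$ points. Its measure is an ergodic element of $\cM_m(G)$ not of type $(p,q)$, contradicting Theorem~\ref{thm:classification}(ii).
\item The same reasoning shows that the irreducibility conditions (MIG1) and (MIG2) persist.
\end{enumerate}
Therefore $\psi$ is an invariant $(p,q)$-graph.

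\textbf{Step 3: identifying the limit measure.}
\begin{enumerate}[label=(\alph*)]
\item Uniform convergence of the graphs gives $W_1(\mu_{\psi_n},\mu_\psi)\le\sup_x d_H(\Psi^n_x,\Psi_x)\to0$. This uses the coupling that matches the nearest points fiberwise, as in Proposition~\ref{prop:cyclic-length}.
\item Hence $\mu=\mu_\psi$. It is ergodic by Proposition~\ref{prop:pq-measure}.
\item $\mu$ is $u$-Gibbs: its conditionals along strong unstable leaves are Lebesgue on the $u$-invariant graph pieces, which are unstable plaques. Alternatively, the weak-$*$ limit of $u$-Gibbs measures is $u$-Gibbs by the standard closedness argument in the $C^{1+}$ setting.
\end{enumerate}

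\textbf{Step 4: the equivalence.} The implication ``uniform convergence $\Rightarrow$ weak-$*$ convergence'' is Step 3(a). For the converse, suppose $\mu_{\psi_n}\wto\mu_\psi$ but $\psi_n$ does not converge to $\psi$ uniformly. By Step 1, a subsequence converges uniformly to some graph $\psi'$ with $\sup_x d_H(\Psi'_x,\Psi_x)>0$. By Step 3, $\mu_{\psi'}=\mu_\psi$. Lemma~\ref{lem:unique-continuous-graph} then forces $\Psi'=\Psi$, a contradiction.
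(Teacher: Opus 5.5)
Your proposal is correct and is essentially the paper's proof: take a limit of the fiber sets at one base point, transport it to all of $\Lambda$ by $su$-holonomies of bounded length and boundedly many iterates to get a uniform limit graph, and exclude collapse of points by applying Theorem~\ref{thm:classification}(ii) to an ergodic component of an invariant measure carried by the limit graph. That is your route (f), which is exactly the paper's argument; the uniform measure on the limit graph need not be ergodic a priori, so passing to an ergodic component is necessary. The equivalence then follows from Lemma~\ref{lem:unique-continuous-graph}, as you say.

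You skip one step the paper makes first: the projections $\pi_*\mu_n$ are ergodic SRB measures of $f$, of which there are finitely many, so they all equal a fixed $m$ for large $n$. On the other hand, your Step~3(c) lifts the SRB conditionals of $m$ along the $u$-invariant graph. This gives the $u$-Gibbs property of the limit, which the paper asserts without separate proof.
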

\begin{proof}
If there is a finite number of ergodic $u$-Gibbs with nonnegative center exponents, the result is direct, so let's assume there are infinitely many.

The projection of $\mu_n$ is an ergodic SRB of $f$. As there are finitely many ones, we have that for $n$ large enough $\mu_n$ projects to a fixed $m$ supported on a basic piece $\Lambda$. Take a periodic point $x\in \Lambda$.

Let $\nu$, be an ergodic $u$-Gibbs with nonnegative center exponent that projects to $m$, and let $\varphi$, its corresponding $pq$-graphs given by Proposition~\ref{prop:su-invariant-pq-graphs}.
Let $N=pq$, order $\Phi_x=\{\varphi_1(x),\dots, \varphi_{N}(x)\}$ cyclically, such that $(\varphi_i(x),\varphi_{i+1}(x))$ does not contain any other point of $\varphi_x$, for $1\leq i\leq N$, with $\varphi_{N+1}(x)=\varphi_1(x)$.

Let $\psi^n$ be the $pq$-graph of $\mu_n$, by Theorem~\ref{thm:classification} we can take $\psi^n_i(x)$ that is contained in $(\varphi_i(x),\varphi_{i+1}(x))$, $1\leq i\leq N$. We have that up to taking a subsequence, $\psi^n_i(x)$ converges to some $\psi_i(x)$ for every $1\leq i\leq N$.

Now there exists $L>0$ such that, for each $z\in \Lambda$ there exists for some $1\leq k \leq per(x)$ and $[z,f^k(x)]\in W^s_L(z)\cap W^u_L(f^k(x))$. Define $\Psi_z=h^s_{[z,f^k(x)],z}h^u_{f^k(x),[z,f^k(x)]}f^k \Psi(x)$, by the $su$-invariance of $\psi^n$, and the fact that holonomies with bounded length are uniformly continuous, $\Psi^n_z\to \Psi_z$ uniformly. In particular $\Psi_z$ does not depend on the choose of $k$ and $[z,f^k(x)]$, moreover $\Psi$ is also $F$ and $su$-invariant.

In principle, $\Psi_x$ may have less than $N$ points. Let's prove that it has exactly $N$. Let $k$ be the number of points. Observe that, by the $su$-invariance, this number does not depend on $x\in \Lambda$. Define $\mu_\psi$ as an invariant measure that projects to $m$ and whose support is contained in $\psi$, this always exists because $\psi$ is invariant and its graph is compact.

Taking the ergodic decomposition of $\mu_\psi$, as $m$ is ergodic, all ergodic components project to $m$. Let $\tilde{\mu}$ be any ergodic component. Taking the Rohklin decomposition on the fibers, we get that $\tilde{\mu}_x$ is atomic with the atoms contained in $\Psi_x$. This defines a graph $\tilde{\psi}$ such that $\tilde{\Psi}_x\subset \Psi_x$. 

Using Theorem~\ref{thm:classification} (with $\varphi$ and $\mu=\tilde{\mu}$) we get that $\tilde{\Psi}_x$ should have $N$ points for almost every $x\in \Lambda$, concluding that $\# \tilde{\Psi}_x=k=N$. Moreover, as $\tilde{\Psi}_x\subset \Psi_x$ have the same number of points we have that $\tilde{\Psi}_x=\Psi_x$, then $\mu_\psi$ is ergodic and $\psi$ is an invariant $pq$-graph.

Now let's prove that $\mu=\mu_\psi$. Take any continuous function $g:N\times \Sone\to \mathbb{R}$ and observe that
$$
\int g \, d \mu_n=
\frac{1}{pq} \int_{N} \sum_{j=1}^{pq}g(x,\psi^n_j(x)) \,dm(x)
$$

By the uniform convergence of $\psi^n\to \psi$ we conclude that $\int g d\mu_n\to \int g d \mu_\psi$. That implies that $\mu_\psi=\mu$ and by Proposition~\ref{prop:pq-measure} we conclude that $\mu$ is ergodic. 

Applying the same argument to any subsequence yields a further subsequence converging uniformly to a continuous invariant graph whose associated measure is $\mu$. By Lemma~\ref{lem:unique-continuous-graph}, we see that the limit $\psi$ is unique, so we get $\psi_n\to \psi$ uniformly.

The fact that $\psi_n\to \psi$ uniformly implies that $\mu_{\psi_n}\wto \mu_\psi$ is straightforward.
\end{proof}

\subsection*{The order of the nonnegative graph measures}

Define
\begin{equation}\label{eq:Ggeq0}
\cG_{\geq 0}:=\left\{\mu:\mu\text{ is an ergodic $u$-Gibbs measure},
\ \pi_*\mu=m,
\ \lambda^c(\mu)\geq 0
\right\}.
\end{equation}

Proposition~\ref{prop:u-closed} has the following corollary:

\begin{corollary}\label{coro:G-compact}
    The set $\cG_{\geq 0}$ is compact in the weak-$*$ topology.
\end{corollary}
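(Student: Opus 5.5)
Since $\Prob(N\times\Sone)$ with the weak-$*$ topology is compact and metrizable (via $\Wone$), it suffices to show that $\cG_{\geq 0}$ is sequentially closed. The plan is therefore: take a sequence $\mu_n\in\cG_{\geq0}$ with $\mu_n\wto\mu$ and check that $\mu$ satisfies each of the defining conditions of $\cG_{\geq0}$ in \eqref{eq:Ggeq0}.

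The first two conditions are handled directly. By Proposition~\ref{prop:u-closed}, the limit $\mu$ is an ergodic $u$-Gibbs measure. The projection condition passes to the limit by continuity of pushforward under the continuous map $\pi$: since $\pi_*\mu_n=m$ for every $n$, we get $\pi_*\mu=\lim\pi_*\mu_n=m$.

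The remaining point, and the only one needing care, is $\lambda^c(\mu)\geq 0$. The integrand $(x,\theta)\mapsto\log|f_x'(\theta)|$ is continuous on the compact space $N\times\Sone$, because $F$ is $C^1$ and each $f_x$ is a diffeomorphism, so $f_x'$ never vanishes. Hence weak-$*$ convergence gives
$$\lambda^c(\mu)=\int\log|f_x'(\theta)|\,d\mu=\lim_{n\to\infty}\int\log|f_x'(\theta)|\,d\mu_n=\lim_{n\to\infty}\lambda^c(\mu_n)\geq 0.$$
This is consistent with the definition of the center exponent, since $\mu$ was already shown to be ergodic.

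Alternatively, the exponent can be read off from the structure of $\mu$. Proposition~\ref{prop:u-closed} shows that $\mu=\mu_\psi$ for an $su$-invariant graph $\psi$, and Proposition~\ref{prop:su-invariant-pq-graphs} gives the same structure for each $\mu_n$. The continuity argument above is simpler and suffices. Once these conditions are verified, $\cG_{\geq0}$ is a closed subset of a compact space, and hence compact.
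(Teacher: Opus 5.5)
Your proposal is correct and follows essentially the same route as the paper: take a weak-$*$ limit, invoke Proposition~\ref{prop:u-closed} to get that it is an ergodic $u$-Gibbs measure, and pass both the projection and the center exponent to the limit using continuity of $\pi_*$ and of the integrand $\log|f'_x(\theta)|$. Phrasing this as sequential closedness in the compact metrizable space $\Prob(N\times\Sone)$, rather than as sequential compactness, is only a cosmetic difference.
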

\begin{proof}
Take a sequence $(\mu_n)$ of measures in $\cG_{\geq 0}$. By compactness of $\Prob(N\times\Sone)$, we can assume, after passing to a subsequence, that $\mu_n \wto \mu$. By Proposition~\ref{prop:u-closed}, $\mu$ is an ergodic $u$-Gibbs measure.

Since the projection $\pi$ and the push-forward $\pi_\ast$ are continuous, and since $\pi_\ast \mu_n=m$ for every $n$, $\pi_\ast \mu=m$.

Finally, recall that
	$$
	\lambda^c(\nu)
	=
	\int
	\log |f'_x(t)|
	\,d\nu(x,t),
	$$
	and that the integrand is continuous. Hence, $\lambda^c(\mu_n)\to\lambda^c(\mu)$. Since $\lambda^c(\mu_n)\geq0$ for every $n$, we have $\lambda^c(\mu)\geq0$. 
    This shows that $\mu\in\cG_{\geq0}$. 
    \end{proof}

From now on to the end of this section, unless stated otherwise, assume that $\#\cG_{\geq0}\geq 2$. By Proposition~\ref{prop:su-invariant-pq-graphs}, every $\mu\in\cG_{\geq0}$ is of the form $\mu=\mu_\varphi$ for a continuous $F$-invariant, $su$-invariant $(p,q)$-graph $\varphi$. By
Theorem~\ref{thm:classification}, the integers $p$ and $q$ are the same
for all such measures. Set $N:=pq$.

Fix, until the end of this section, a measure $\mu^0\in\cG_{\geq0}$, and let $\varphi^0$ be its corresponding
$(p,q)$-graph. Given a \textcolor{black}{transitive} point $x_0\in \Lambda$, the set $$\Phi^0_{x_0}=\{(\varphi^0)^i_j(x_0);1\leq i\leq p,1\leq j\leq q\}$$ contains exactly $N$ elements. Ordering them in positive cyclic order, we get:
$$\Phi^0_{x_0}=\{a_1,\dots,a_N\},$$ with $a_1<\dots<a_N$.

For every $\mu=\mu_\psi\in\cG_{\geq0}$ with $\mu\neq\mu^0$,
we will show that $\Psi_{x_0}$ contains exactly one point in $(a_1,a_2)$. To do so, we need some lemmas. 

\begin{lemma}\label{lem:graphs-do-not-intersect}
	Let $\varphi$ and $\psi$ be two continuous $F$-invariant, $(p,q)$-graphs over $\Lambda$, and let
	$x_0\in\Lambda$ be the transitive point fixed above. If
	$\Phi_{x_0}\cap\Psi_{x_0}\neq\emptyset$, then $\Phi_x=\Psi_x$ for every $x\in\Lambda$. In particular, if $\varphi\neq\psi$ then $\Phi_{x_0}\cap\Psi_{x_0}=\emptyset$.
\end{lemma}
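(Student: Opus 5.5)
The argument is to show that the set where the two fibers meet is closed, invariant and nonempty, and then to use the dense orbit of $x_0$. Set
$$
E:=\{x\in\Lambda:\ \Phi_x\cap\Psi_x\neq\emptyset\}.
$$

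\emph{Step 1: $E$ is closed.} Both $x\mapsto\Phi_x$ and $x\mapsto\Psi_x$ are continuous in the Hausdorff topology, and each fiber consists of exactly $pq$ distinct points. If $x_n\to x$ and $t_n\in\Phi_{x_n}\cap\Psi_{x_n}$, pass to a subsequence with $t_n\to t$. Hausdorff convergence gives $t\in\Phi_x\cap\Psi_x$, so $x\in E$.

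\emph{Step 2: $E$ is invariant.} Invariance $f_x(\Phi_x)=\Phi_{f(x)}$ and injectivity of $f_x$ give $f_x(\Phi_x\cap\Psi_x)=\Phi_{f(x)}\cap\Psi_{f(x)}$. Here I would note that a continuous graph which is invariant $m$-a.e. is invariant at every point of $\Lambda=\supp m$, again by continuity and density of full-measure sets. Hence $f(E)=E$. The hypothesis says $x_0\in E$. Since $x_0$ is transitive, its forward orbit is dense in $\Lambda$, so $E$ is closed and contains a dense set, and therefore $E=\Lambda$.

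\emph{Step 3: conclude with the intersection count.} Step 2 only gives a nonempty intersection at every point, not equality of the fibers. To upgrade it, I would use the function $r(x):=\#(\Phi_x\cap\Psi_x)$, as in Lemma~\ref{lem:irreducible-graphs}. That lemma shows that either $\Phi_x\cap\Psi_x=\emptyset$ for $m$-a.e. $x$ or $\Phi_x=\Psi_x$ for $m$-a.e. $x$. The first alternative contradicts $E=\Lambda$, so $\Phi_x=\Psi_x$ for $m$-a.e. $x$. Continuity and density of full $m$-measure sets in $\Lambda$, as in Lemma~\ref{lem:unique-continuous-graph}, then give $\Phi_x=\Psi_x$ for every $x\in\Lambda$. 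The final claim is the contrapositive: if $\varphi\neq\psi$, meaning the fibers differ somewhere, then $\Phi_{x_0}\cap\Psi_{x_0}=\emptyset$.

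\emph{Main obstacle.} The delicate point is the passage between "every point" and "$m$-a.e." statements. The closedness in Step 1 needs Hausdorff continuity on all of $\Lambda$. Lemma~\ref{lem:irreducible-graphs} needs the graphs to satisfy (MIG1) with respect to $m$, which the hypothesis that both are invariant $(p,q)$-graphs provides. With those in place, Step 1 should be the only place where real care is required.
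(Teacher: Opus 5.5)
Your proof is correct and follows essentially the same route as the paper. You propagate the intersection along the dense forward orbit of $x_0$, use Hausdorff continuity to get $\Phi_x\cap\Psi_x\neq\emptyset$ on all of $\Lambda$, invoke Lemma~\ref{lem:irreducible-graphs} to get $\Phi_x=\Psi_x$ for $m$-a.e.\ $x$, and extend to all of $\Lambda$ using continuity and $\supp(m)=\Lambda$. Your explicit closedness of $E$, and your remark that $m$-a.e.\ invariance upgrades to invariance at every point by continuity, spell out details the paper leaves implicit.
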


\begin{proof}
	Suppose $\Phi_{x_0}\cap\Psi_{x_0}\neq\emptyset$ and take $t_0\in\Phi_{x_0}\cap\Psi_{x_0}$. Since $\varphi$ and $\psi$ are $F$-invariant, we have
	\[
	f_{x_0}^n(t_0)
	\in
	\Phi_{f^n(x_0)}\cap\Psi_{f^n(x_0)}
	\]
	for every $n\geq0$. Since we chose $x_0$ to be transitive, its forward orbit is dense in $\Lambda$ and, since both $\varphi$ and $\psi$ are continuous, we have $\Phi_x\cap\Psi_x\neq\emptyset$ for every $x\in\Lambda$.

	By Lemma~\ref{lem:irreducible-graphs}, two irreducible invariant
	graphs are either equal or disjoint on $m$-a.e.\ fiber. Hence
	$\Phi_x=\Psi_x$ for $m$-a.e.\ $x$ and, since
	$\supp(m)=\Lambda$ and both graphs are continuous, the equality
	extends for every $x\in\Lambda$.
\end{proof}

For $su$-invariant graphs we can say more:

\begin{lemma}\label{lem:su-graphs-do-not-intersect}
Let $\varphi$ and $\psi$ be two continuous, $su$-invariant, $F$-invariant $(p,q)$-graphs over $\Lambda$. Then either
$ 
\Phi_x=\Psi_x
$ 
for every $x\in\Lambda$, or
$ 
\Phi_x\cap\Psi_x=\emptyset
$ 
for every $x\in\Lambda$.
\end{lemma}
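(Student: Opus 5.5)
The plan is to show that the integer-valued function $r(x):=\#(\Phi_x\cap\Psi_x)$ is constant on all of $\Lambda$ (not just $m$-a.e.), and then to conclude with Lemma~\ref{lem:graphs-do-not-intersect}. The key point is that $su$-invariance transports intersections along stable and unstable manifolds. The holonomies $h^s_{x,y}$ and $h^u_{x,y}$ are homeomorphisms of $\Sone$, and they map $\Phi_x$ onto $\Phi_y$ and $\Psi_x$ onto $\Psi_y$. Hence for $y\in W^s(x)\cap\Lambda$ we get
\[
h^s_{x,y}(\Phi_x\cap\Psi_x)=\Phi_y\cap\Psi_y,
\]
so $r(y)=r(x)$, and likewise for $y\in W^u(x)\cap\Lambda$. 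Here I use the holonomy over the global stable manifold, obtained by composing local holonomies with iterates, which is the sense in which $s$-invariance was defined.

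Next I will show that $r$ is constant on $\Lambda$. Fix $x,z\in\Lambda$. As in the proof of Proposition~\ref{prop:u-closed}, take a periodic point $p\in\Lambda$ and use local product structure. For $z$ there is an iterate $f^k(p)$ with $[z,f^k(p)]\in W^s_L(z)\cap W^u_L(f^k(p))$, so $r(z)=r([z,f^k(p)])=r(f^k(p))$. By $F$-invariance of both graphs and injectivity of the fiber maps, $f_y(\Phi_y\cap\Psi_y)=\Phi_{f(y)}\cap\Psi_{f(y)}$, so $r(f^k(p))=r(p)$. Thus $r(z)=r(p)$ for every $z\in\Lambda$.

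To finish, apply the constancy at the transitive point $x_0$. If $r\equiv0$, then $\Phi_x\cap\Psi_x=\emptyset$ for every $x\in\Lambda$. Otherwise $\Phi_{x_0}\cap\Psi_{x_0}\neq\emptyset$, and Lemma~\ref{lem:graphs-do-not-intersect} gives $\Phi_x=\Psi_x$ for every $x\in\Lambda$.

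The only delicate step is the use of local product structure with bounded-length stable and unstable pieces. I would simply quote the same argument already used in Proposition~\ref{prop:u-closed}, which applies because $\Lambda$ is a basic piece, and in our setting a hyperbolic attractor. A possible alternative would avoid product structure: from $r$ being continuous-free we cannot conclude directly, but $r$ is $F$-invariant and $m$-a.e. constant by Lemma~\ref{lem:irreducible-graphs}. The holonomy argument, however, is what upgrades ``a.e.'' to ``every $x$'', so it is the essential ingredient.
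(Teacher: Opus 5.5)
Your proposal is correct and takes essentially the paper's approach: use $su$-invariance, $F$-invariance and local product structure to carry $\Phi_x\cap\Psi_x$ bijectively onto the intersection at a reference point, then conclude with Lemma~\ref{lem:graphs-do-not-intersect}. The only difference is the reference point. The paper brackets $x$ directly with a point $f^n(x_0)$ of the dense orbit of the transitive point, so it only needs local holonomies. You instead route through a periodic orbit and bounded-length unstable plaques, as in Proposition~\ref{prop:u-closed}; this also shows that $\#(\Phi_x\cap\Psi_x)$ is constant on $\Lambda$.
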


\begin{proof}
Let $E:=\{x\in\Lambda:\Phi_x\cap\Psi_x\neq\emptyset\}$.
By the previous lemma, it is enough to show that, if $E\neq\emptyset$, then $x_0\in E$.
\
By the density of the orbit of $x_0$, there exists $n$ such that $W^s_{loc}(f^n(x_0))\pitchfork W^u_{loc}(x)=\{z\}$, by the local product structure of $\Lambda$ we have that $z\in \Lambda$, then the $su$-invariance and $F$ invariance implies that $\Phi_{x_0}=f_{x_0}^{-n}h^s_{z,f^n(x_0)} h^u_{x,z}(\Phi_x)$. So $x\in E$ if and only if $x_0\in E$. 
\end{proof}

The next lemma tells us how points of different graphs are ordered in a fiber:

\begin{lemma}\label{lem:interlacing-at-x0}
    Let $\varphi$ and $\psi$ be two distinct continuous
	$F$-invariant $(p,q)$-graphs over $\Lambda$. Let $a,b\in\Phi_{x_0}$ be two consecutive points in $\Phi_{x_0}$ in positive cyclic order. Then $\#\bigl(\Psi_{x_0}\cap(a,b)\bigr)=1$.
\end{lemma}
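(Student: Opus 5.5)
The plan is to combine the measure-theoretic interlacing of Theorem~\ref{thm:classification}(ii) with continuity of the graphs and density of the orbit of $x_0$. For $x\in\Lambda$, write $\Phi_x=\{\phi_1(x),\dots,\phi_N(x)\}$ in positive cyclic order. Define the measurable function
\[
c_i(x):=\#\bigl(\Psi_x\cap(\phi_i(x),\phi_{i+1}(x))\bigr).
\]
By Lemma~\ref{lem:irreducible-graphs} (distinct graphs being disjoint a.e.), the Furstenberg statement (ii) gives $c_i(x)=1$ for every $i$ and $m$-a.e.\ $x$. The goal is to transfer this a.e.\ statement to the single point $x_0$.

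First, Lemma~\ref{lem:graphs-do-not-intersect} gives $\Phi_{x_0}\cap\Psi_{x_0}=\emptyset$, since $\varphi\ne\psi$. By Hausdorff continuity of both graphs and finiteness of the sets, disjointness at $x_0$ persists on a neighborhood $U$ of $x_0$. On $U$ the $2N$ points vary continuously and never collide. Hence the combinatorial configuration, namely how many points of $\Psi_x$ lie in each gap of $\Phi_x$, is locally constant near $x_0$. Concretely, one can label the points continuously near $x_0$, and since no point of $\Psi$ crosses a point of $\Phi$, each count $c_i$ is constant on a small ball $B\subset U$ around $x_0$.

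Next, $x_0\in\Lambda=\supp(m)$, so $m(B)>0$. Therefore $B$ contains a point $x$ with $c_i(x)=1$ for all $i$. By local constancy, $c_i(x_0)=1$ for all $i$, and in particular the gap $(a,b)$ contains exactly one point of $\Psi_{x_0}$.

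The main obstacle is the continuous labeling and local constancy step. The graphs are only known to be Hausdorff-continuous as sets, so I would argue directly. Let $\delta$ be smaller than one third of the minimal distance between distinct points of $\Phi_{x_0}\cup\Psi_{x_0}$. For $x$ close to $x_0$, each point of $\Phi_{x_0}$ and of $\Psi_{x_0}$ then has exactly one point of $\Phi_x$, respectively $\Psi_x$, within $\delta$. This uses that both sets have exactly $N$ points, so Hausdorff closeness forces a bijection. The cyclic order of the combined $2N$ points is thereby preserved, which gives the constancy of the gap counts.

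A minor point is that Theorem~\ref{thm:classification}(ii) is stated with half-open intervals $[\cdot,\cdot)$. On the full-measure set where $\Phi_x\cap\Psi_x=\emptyset$, this is the same as the open-interval count.
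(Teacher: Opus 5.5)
Your proof is correct and follows essentially the same route as the paper. Both arguments transfer the $m$-a.e.\ interlacing of Theorem~\ref{thm:classification}(ii) to the single point $x_0$, using disjointness at $x_0$ (Lemma~\ref{lem:graphs-do-not-intersect}), Hausdorff continuity of both graphs, and the density of full-measure sets in $\supp(m)=\Lambda$. The only difference is packaging: the paper takes a sequence of good points $x_n\to x_0$ and proves existence by a limit argument and uniqueness by contradiction, whereas your exact-cardinality bijection shows that the whole gap configuration is locally constant near $x_0$, which gives both at once.
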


\begin{proof}
	First observe that, by item $(ii)$ of Theorem~\ref{thm:classification}, for $m$-a.e.\ $x$, each connected component of $\Sone\setminus\Phi_x$ contains exactly one point of $\Psi_x$. Also note that, by
	Lemma~\ref{lem:irreducible-graphs}, since $\varphi\neq \psi$, we have $\Phi_x\cap\Psi_x=\emptyset$ for $m$-a.e.\ $x$. Since $\supp(m)=\Lambda$, we can choose a sequence $(x_n)$ in $\Lambda$ such that $x_n\to x_0$ and for which both properties hold simultaneously.

	By continuity of $\varphi$ in the Hausdorff topology, for $n$ sufficiently large there are consecutive points $a_n,b_n\in\Phi_{x_n}$ such that $a_n\to a$, and $b_n\to b$. Moreover, for each such $n$, there exists a unique $z_n\in\Psi_{x_n}\cap(a_n,b_n)$.

	Passing to a subsequence if necessary, we may assume that
	$z_n\to z$ and, by continuity of $\psi$ in the Hausdorff topology, we have $z\in\Psi_{x_0} \cap [a,b]$. In fact, since $\varphi$ and $\psi$ are distinct, we have that $\Phi_{x_0}\cap\Psi_{x_0}=\emptyset$ by Lemma~\ref{lem:graphs-do-not-intersect}, so $z\neq a,b$, and hence $z\in(a,b)$. This proves that $\#\bigl(\Psi_{x_0}\cap(a,b)\bigr)\geq1$.

	Suppose now that there are two distinct points $z,z'\in\Psi_{x_0}\cap(a,b)$. Choose disjoint neighborhoods $U$ and $U'$ of $z$ and $z'$ whose closures are contained in $(a,b)$. By continuity of $\psi$ in the Hausdorff topology, for all sufficiently large $n$ the set
	$\Psi_{x_n}$ contains a point in $U$ and a point in $U'$.
	Since also $a_n\to a$ and $b_n\to b$, both of these points belong
	to the interval $(a_n,b_n)$ for $n$ sufficiently large. This contradicts the fact that $\#\bigl(\Psi_{x_n}\cap(a_n,b_n)\bigr)=1$ for every $n$ and shows that $\#\bigl(\Psi_{x_0}\cap(a,b)\bigr)=1$.
\end{proof}

If $\mu=\mu_\psi\in\cG_{\geq0}$ with $\mu\neq\mu^0$, then $\psi\neq \varphi^0$. Then, Lemma~\ref{lem:interlacing-at-x0} implies that the set $\Psi_{x_0}\cap(a_1,a_2)$ contains exactly one point. We denote this point by $z(\mu)$.

Now, let $I:=[a_1,a_2]$ and denote by $\widehat I:=I/\{a_1\sim a_2\}$ the quotient obtained by identifying the two endpoints. Consider the map $\iota:\cG_{\geq0}\to\widehat I$ defined by
\begin{equation}\label{eq:iota}
\iota(\mu)
=
\begin{cases}
z(\mu),&\mu\neq\mu^0,\\
[a_1]=[a_2],&\mu=\mu^0.
\end{cases}
\end{equation}

\begin{proposition}\label{prop:iota-well-defined}
The map $\iota:\cG_{\geq0}\to\widehat I$ is well-defined and injective.
\end{proposition}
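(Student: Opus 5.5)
Well-definedness reduces to checking that $z(\mu)$ exists and is unique for every $\mu\in\cG_{\geq0}\setminus\{\mu^0\}$. Injectivity reduces to two facts: no $\mu\neq\mu^0$ is sent to the class $[a_1]=[a_2]$, and two distinct measures have distinct points $z(\mu)$. We carry these out in that order.

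\textbf{Well-definedness.} Let $\mu\in\cG_{\geq0}$ with $\mu\neq\mu^0$. By Proposition~\ref{prop:su-invariant-pq-graphs} we can write $\mu=\mu_\psi$ for a continuous, $F$-invariant, $su$-invariant $(p,q)$-graph $\psi$. By Theorem~\ref{thm:classification}, $\psi$ has the same $p$ and $q$ as $\varphi^0$. By Lemma~\ref{lem:unique-continuous-graph} the set $\Psi_{x_0}$ does not depend on the choice of continuous representative. Since $\mu\neq\mu^0$, the same lemma gives $\Psi\neq\Phi^0$ at some point. Lemma~\ref{lem:su-graphs-do-not-intersect} then upgrades this to $\Psi_x\cap\Phi^0_x=\emptyset$ for every $x\in\Lambda$, in particular $\Psi_{x_0}\cap\Phi^0_{x_0}=\emptyset$. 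Lemma~\ref{lem:interlacing-at-x0}, applied to the consecutive points $a_1,a_2$, shows that $\Psi_{x_0}\cap(a_1,a_2)$ is a single point. So $z(\mu)$ is uniquely determined and lies in the open interval, and $\iota$ is well defined. It also follows that $\iota(\mu)\neq[a_1]$ whenever $\mu\neq\mu^0$.

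\textbf{Injectivity.} Let $\mu=\mu_\psi$ and $\nu=\mu_{\psi'}$ be distinct, both different from $\mu^0$, and suppose $z(\mu)=z(\nu)$. Then $\Psi_{x_0}\cap\Psi'_{x_0}\neq\emptyset$. Lemma~\ref{lem:graphs-do-not-intersect} gives $\Psi_x=\Psi'_x$ for every $x\in\Lambda$, and Lemma~\ref{lem:unique-continuous-graph} then gives $\mu=\nu$, a contradiction. The remaining case, where one of the two measures is $\mu^0$, is covered by the observation at the end of the previous step: any $\mu\neq\mu^0$ has image in the open interval, never at the identified endpoint.

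\textbf{Main obstacle.} The delicate point is that the interlacing and disjointness hold \emph{at the specific point} $x_0$, not merely for $m$-a.e.\ $x$. This is exactly what continuity plus $su$-invariance provide, through Lemmas~\ref{lem:su-graphs-do-not-intersect} and~\ref{lem:interlacing-at-x0}, so the argument has to invoke the continuous $su$-invariant representative rather than the measurable graph from Theorem~\ref{thm:classification}.
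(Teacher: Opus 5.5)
Your proof is correct and follows essentially the same route as the paper. Both arguments use Lemma~\ref{lem:unique-continuous-graph} for independence of the continuous representative, Lemma~\ref{lem:interlacing-at-x0} to place $z(\mu)$ in the open interval $(a_1,a_2)$ (so that $\mu\neq\mu^0$ never maps to $[a_1]=[a_2]$), and Lemmas~\ref{lem:graphs-do-not-intersect} and~\ref{lem:unique-continuous-graph} for injectivity. Your extra appeal to Lemma~\ref{lem:su-graphs-do-not-intersect} is harmless but redundant, since Lemma~\ref{lem:interlacing-at-x0} already obtains disjointness at $x_0$ internally via Lemma~\ref{lem:graphs-do-not-intersect}.
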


\begin{proof}
Let $\mu\in\cG_{\geq0}$ be a measure with $\mu\neq\mu^0$, and suppose that $\mu=\mu_\psi=\mu_{\widetilde\psi}$ for two continuous $(p,q)$-graphs $\psi$ and $\widetilde\psi$. By Lemma~\ref{lem:unique-continuous-graph}, $\Psi_x=\widetilde\Psi_x$ for every $x\in\Lambda$. In particular, $\Psi_{x_0}=\widetilde\Psi_{x_0}$, and both graphs must determine the same unique point in $(a_1,a_2)$. This shows that $\iota(\mu)$ depends only on $\mu$, and, hence, that $\iota$ is well-defined.

For the injectivity part, let $\mu,\nu \in \cG_{\geq0}$ and suppose that $\iota(\mu)=\iota(\nu)$. First, if $\mu=\mu^0$, then $\iota(\mu)=[a_1]=[a_2]$. If $\nu\neq \mu^0$, then $z(\nu)\in(a_1,a_2)$ and $\iota(\mu)\neq\iota(\nu)$, showing that $\mu=\mu^0=\nu$.

Now, suppose both $\mu$ and $\nu$ are different from $\mu^0$, and that their corresponding graphs are $\psi$ and $\varphi$, respectively. Then, $z(\mu)=z(\nu)\in \Psi_{x_0}\cap\Phi_{x_0}$. By Lemma~\ref{lem:graphs-do-not-intersect}, $\Psi_x=\Phi_x$ for every $x\in\Lambda$ and, by Lemma~\ref{lem:unique-continuous-graph}, $\mu=\nu$.
\end{proof}

We now show that $\iota$ is continuous. Since it is injective, $\cG_{\geq0}$ is compact and $\widehat I$ is Hausdorff, we can say more:

\begin{proposition}\label{prop:iota-homeo}
    The map $\iota:\cG_{\geq0}\to\widehat I$ is a homeomorphism onto its image.
\end{proposition}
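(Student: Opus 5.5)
By the remark preceding the statement, it suffices to show that $\iota$ is continuous. Indeed, $\iota$ is injective by Proposition~\ref{prop:iota-well-defined}, $\cG_{\geq0}$ is compact by Corollary~\ref{coro:G-compact}, and $\widehat I$ is Hausdorff. A continuous injection from a compact space into a Hausdorff space is a closed map, hence a homeomorphism onto its image. Since $\cG_{\geq0}$ is metrizable (the $W_1$ topology), I will prove sequential continuity: if $\mu_n\wto\mu$ in $\cG_{\geq0}$, then $\iota(\mu_n)\to\iota(\mu)$ in $\widehat I$. Passing to subsequences, it is enough to show that every subsequence of $\iota(\mu_n)$ has a further subsequence converging to $\iota(\mu)$. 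Terms with $\mu_n=\mu^0$ have $\iota(\mu_n)=[a_1]$, so they can be handled together with the case $\mu=\mu^0$ below.

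The key input is Proposition~\ref{prop:u-closed}. Write $\mu_n=\mu_{\psi_n}$ and $\mu=\mu_\psi$ with continuous $su$-invariant $(p,q)$-graphs. Then $\mu_{\psi_n}\wto\mu_\psi$ gives $\psi_n\to\psi$ uniformly, and in particular $d_H(\Psi^n_{x_0},\Psi_{x_0})\to0$.

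\emph{Case $\mu\neq\mu^0$.} Here $z(\mu)\in(a_1,a_2)$, and by Lemma~\ref{lem:graphs-do-not-intersect} the set $\Psi_{x_0}$ is disjoint from $\Phi^0_{x_0}$. By Lemma~\ref{lem:interlacing-at-x0}, $\Psi_{x_0}$ has exactly one point in each arc $(a_j,a_{j+1})$. Choose $\delta>0$ smaller than half the minimal distance between points of $\Psi_{x_0}\cup\Phi^0_{x_0}$. For large $n$, each point of $\Psi^n_{x_0}$ is within $\delta$ of some point of $\Psi_{x_0}$. Both sets have exactly $N$ points, so the $\delta$-balls around points of $\Psi_{x_0}$ each contain exactly one point of $\Psi^n_{x_0}$. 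The $\delta$-ball around $z(\mu)$ lies inside $(a_1,a_2)$. Hence for large $n$ we have $\mu_n\neq\mu^0$, the unique point $z(\mu_n)$ of $\Psi^n_{x_0}$ in $(a_1,a_2)$ lies in that ball, and $|z(\mu_n)-z(\mu)|<\delta$. Letting $\delta\to0$ gives convergence.

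\emph{Case $\mu=\mu^0$.} Here $\Psi_{x_0}=\Phi^0_{x_0}$ by Lemma~\ref{lem:unique-continuous-graph}. By uniform convergence, every point of $\Psi^n_{x_0}$ is within $\delta$ of some $a_j$. Consider the $n$ with $\mu_n\neq\mu^0$. The point $z(\mu_n)\in(a_1,a_2)$ is then within $\delta$ of some $a_j$, and since it lies in $(a_1,a_2)$ it must be close to $a_1$ or to $a_2$ once $\delta$ is smaller than the gaps. Both of these points represent $[a_1]=[a_2]$ in $\widehat I$, so $\iota(\mu_n)\to[a_1]=\iota(\mu)$.

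The only delicate point is the counting argument in the first case, namely that the Hausdorff closeness of two $N$-point sets forces a bijective matching. It follows from the separation choice of $\delta$ together with $\#\Psi^n_{x_0}=\#\Psi_{x_0}=N$, which holds by Theorem~\ref{thm:classification} and continuity of the graphs.
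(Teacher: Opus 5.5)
Your proposal is correct and follows essentially the same route as the paper. You reduce to continuity via the compact-to-Hausdorff argument, use Proposition~\ref{prop:u-closed} to get Hausdorff convergence $\Psi^n_{x_0}\to\Psi_{x_0}$, and split into the cases $\mu\neq\mu^0$ and $\mu=\mu^0$; your $\delta$-matching argument just makes explicit the step the paper writes as $(\Psi_n)_{x_0}\cap[a_1,a_2]\to\Psi_{x_0}\cap[a_1,a_2]$ (one small correction: $\#\Psi^n_{x_0}=N$ at the specific point $x_0$ comes from Proposition~\ref{prop:su-invariant-pq-graphs}, whose $su$-invariant extension takes values in $(\Sone)^{pq}_*$ at every point of $\Lambda$, not from continuity alone).
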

\begin{proof}
    Since $\widehat I$ is Hausdorff, $\cG_{\geq0}$ is compact by Corollary~\ref{coro:G-compact}, and $\iota$ is injective by Proposition~\ref{prop:iota-well-defined}, it suffices to show that $\iota$ is continuous.

    To prove continuity, take $(\mu_n)_n$ a sequence in $\cG_{\geq0}$ with $\mu_n \to \mu$. We will show that $\iota(\mu_n)\to \iota(\mu)$. Write $\mu_n=\mu_{\psi_n}$ and $\mu=\mu_\psi$ and recall that, by Proposition~\ref{prop:u-closed}, we have $\psi_n\to \psi$ uniformly and, in particular, $(\Psi_n)_{x_0}\to \Psi_{x_0}$ in the Hausdorff topology. Then, if $\mu\neq \mu_0$, $\{z(\mu_n)\}=(\Psi_n)_{x_0}\cap [a_1,a_2]\to \Psi_{x_0}\cap [a_1,a_2]=\{z(\mu)\}$, so $z(\mu_n)\to z(\mu)$. If $\mu=\mu_0$, $\Psi_{x_0}\cap [a_1,a_2]=\{a_1,a_2\}$, then any accumulation point of $z(\mu_n)$ is equal to $a_1$ or $a_2$. In any case $\iota(\mu_n)\to \iota(\mu).$ 
\end{proof}

Now let's prove the key lemma to understand the set of empirical measures with zero center exponent.
\begin{lemma}\label{lem:empirical-segment}
Let $(x,t)\in N\times\Sone$ be such that $x\in B(m)$. Assume that, for every accumulation point $\eta$ of the sequence
$\bigl(\emp_n(x,t)\bigr)_{n\geq1}$, the ergodic decomposition of
$\eta$ is supported on $\cG_{\geq0}$. Then there exist $\mu^-,\mu^+\in\cG_{\geq0}$ such that
$\acc\bigl(\emp_n(x,t)\bigr) \subset [\mu^-,\mu^+]$, where $[\mu^-,\mu^+]:=\left\{(1-s)\mu^-+s\mu^+:s\in[0,1]\right\}$.
\end{lemma}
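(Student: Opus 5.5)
The plan is to exploit the circular order on $\cG_{\geq0}$ given by $\iota$, and to show that the fiber coordinate of the orbit of $(x,t)$ is eventually trapped between two consecutive graphs.

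\emph{Reduction.} If $\#\cG_{\geq0}\le 1$ the statement is immediate. Indeed, every accumulation point $\eta$ has ergodic decomposition supported on $\cG_{\geq0}$, so $\eta$ equals the single element, and we take $\mu^-=\mu^+$. Assume therefore $\#\cG_{\geq0}\geq2$. By Proposition~\ref{prop:iota-homeo}, $\cG_{\geq0}$ is homeomorphic to a compact subset $K:=\iota(\cG_{\geq0})\subset\widehat I$. Since $\widehat I$ is a circle, $K$ carries a cyclic order, which by the interlacing Lemma~\ref{lem:interlacing-at-x0} coincides with the fiberwise order of the corresponding graphs over every $x\in\Lambda$. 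The order is preserved by $F$ and by the holonomies, using Lemma~\ref{lem:su-graphs-do-not-intersect} and the argument of Proposition~\ref{prop:cyclic-length}.

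\emph{Trapping.} Since $x\in B(m)$, the base orbit accumulates on $\Lambda$. We use $W^s(f^n x)$ together with the stable holonomy: for large $n$ the point $f^n(x)$ lies in $W^s_{loc}$ of some $y_n\in\Lambda$, and $h^s_{f^nx,y_n}$ transports the fiber point $f^n_x(t)$ to a point $t_n$ over $y_n$. Because the graphs are $F$-invariant and $s$-invariant, the position of $t_n$ relative to $\{\Psi_{y_n}:\psi\in\cG_{\geq0}\}$ is independent of $n$. This produces a cut of the cyclic order: either $t_n$ lies on a graph, or there is a complementary gap of $K$ (a connected component of $\widehat I\setminus K$, or a degenerate gap) containing it. Let $\mu^-,\mu^+$ be the endpoints of that gap, that is, the two elements of $\cG_{\geq0}$ that are closest to $t$ on either side in the cyclic order. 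Closedness of $K$ gives that these endpoints exist and lie in $\cG_{\geq0}$. Consequently, for all large $n$, $F^n(x,t)$ lies in the region between the graphs $\psi^-$ and $\psi^+$ (up to an error $\to0$ from the holonomy). This region contains no other element of $\cG_{\geq0}$ in its interior.

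\emph{Conclusion.} Let $\eta$ be an accumulation point of $\emp_n(x,t)$. By the trapping, $\eta$ is supported on the closed region $R$ between $\psi^-$ and $\psi^+$ over $\Lambda$. Its ergodic components lie in $\cG_{\geq0}$ by hypothesis and are supported in $R$. An ergodic measure $\mu_\psi\in\cG_{\geq0}$ supported in $R$ has $\Psi_{x_0}$ meeting $(a_1,a_2)$ in a point between $z(\mu^-)$ and $z(\mu^+)$, so by the choice of the gap $\mu_\psi\in\{\mu^-,\mu^+\}$. Hence $\eta=(1-s)\mu^-+s\mu^+$.

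The main obstacle is the trapping step. It requires making rigorous that the orbit of a non-$\Lambda$ point is eventually sandwiched between two specific graphs, including the degenerate case where $K$ accumulates on the orbit's position from one side. In that case I would use the compactness of $K$ to take a supremum or infimum in the order, and argue via uniform convergence (Proposition~\ref{prop:u-closed}) that the limit graph is still a barrier.
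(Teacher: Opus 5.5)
Your proposal is correct and is essentially the paper's argument. You move to a base point in $\Lambda$ by stable holonomy, take the complementary component of the compact set $\iota(\cG_{\geq0})$ containing $t$ with endpoints $\mu^\pm$, confine the orbit to a closed forward-invariant set meeting no other graph, and conclude via the ergodic decomposition. The one difference is the confining set: you use the region between $\psi^-$ and $\psi^+$ over all of $\Lambda$, while the paper uses the orbit closure of $\{x\}\times[t^-,t^+]$ together with its Claim. Your choice is legitimate because, as in Lemma~\ref{lem:su-graphs-do-not-intersect}, a single orientation-preserving map $h^u\circ h^s\circ f^n_{x_0}$ carries the whole configuration of graphs over $x_0$ to the configuration over any point of $\Lambda$. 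Finally, the ``degenerate gap'' you single out as the main obstacle does not arise. By Corollary~\ref{coro:G-compact} and Proposition~\ref{prop:u-closed}, the set of graph points in each fiber is closed, so a $t$ lying on no graph sits in an open complementary arc whose two endpoints are graph points.
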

\begin{proof}
First, assume that $x\in \Lambda$, the attractor that supports $m$. Since $x\in B(m)$ and $\supp(m)=\Lambda$, the forward orbit of $x$ is dense in $\Lambda$.

If $(x,t)$ belongs to the graph $\varphi$ corresponding to some $\mu=\mu_{\varphi} \in \cG_{\geq 0}$, then every accumulation point $\eta$ of the empirical measures of $(x,t)$ is equal to $\mu$, and we can take $\mu^-=\mu^+=\mu$. Indeed, since the graph $\varphi$ is invariant, the forward orbit of $(x,t)$ is contained in it, so every accumulation measure is supported on $\operatorname{graph}(\varphi)$. Since $x\in B(m)$, $\pi_\ast \eta =m$. Finally, by ergodic decomposition and Theorem~\ref{thm:classification}, every ergodic component of $\eta$ must be $\mu_{\varphi}$, so that $\eta$ must be $\mu_{\varphi}$ itself. So, from now on, assume that $(x,t)$ does not belong to the graph of any measure in $\cG_{\geq 0}$. 

Since $t\notin \Phi^0_{x}$, after relabeling $\Phi^0_{x}$ in positive cyclic order, we may assume that $t\in (a_1(x),a_2(x))$. Now, let $I_{x}=[a_1(x),a_2(x)]$ and $\iota_{x}:\cG_{\geq 0}\to \widehat I_{x}$ defined above, using the transitive point $x$ (instead of $x_0$). By Proposition~\ref{prop:iota-homeo}, the map $\iota_{x}$ is a homeomorphism onto its image. Since $\cG_{\geq 0}$ is compact, $S_{x}:=\iota_x (\cG_{\geq 0})$ is a compact subset of $\widehat I_{x}$. 

Notice that, since $(x,t)$ does not belong to any graph, $t\notin S_{x}$. Let $J$ be the connected component of $\widehat{I}_{x}\setminus S_x$ containing $t$. By compactness of $S_x$, both endpoints of $J$ belong to $S_x$, so that there exist two unique measures $\mu^-,\mu^+\in \cG_{\geq 0}$ corresponding to these endpoints, with corresponding invariant graphs $\psi^-$ and $\psi^+$ (notice that since $\#\cG_{\geq 0}\geq 2$, we have $\mu^-\neq\mu^+$). Since $J$ is contained in $\widehat{I}_{x}\setminus S_x$ we can associate it to an open interval in $(a_1(x),a_2(x))$ and write it as $J=(t^-,t^+)\subset (a_1(x),a_2(x))$. Then, $t\in (t^-,t^+)$, $\iota_x(\mu^-)=t^-$, and $\iota_x(\mu^+)=t^+$. Also, $t^-\in \Psi^-_x$ and $t^+\in \Psi^+_x$, and there is no continuous invariant $(p,q)$-graph corresponding to a measure in $\cG_{\geq 0}$ which intersects $\{x\}\times (t^-,t^+)$.

Define $$K=\overline{\cup_{n\geq 0} F^n\left(\{x\} \times [t^-,t^+] \right)}.$$ Notice that, since the fiber maps preserve orientation and $t\in [t^-,t^+]$, the forward orbit of $(x,t)$ by $F$ is contained in $K$ and, as the forward orbit of $x$ by $f$ is dense in $\Lambda$, we have $\pi(K)=\Lambda$.

\begin{claim}
        Let $\varphi$ be a continuous invariant $pq$-graph corresponding to a measure $\cG_{\geq 0}$, different from $\psi^-$ and $\psi^+$. Then $\operatorname{graph}(\varphi)\cap K=\emptyset$.
\end{claim}
\begin{proof}
    Assume that there exists $(z,s)\in \operatorname{graph}(\varphi)\cap K$. By definition of $K$, there exist a sequence of integers $n_k\geq 0$ and points $s_k\in f_x^{n_k}([t^-,t^+])$ such that $f^{n_k}(x)\to z$ and $s_k\to s$.

    Since, by hypothesis, $\varphi$ is different from $\psi^-$ and $\psi^+$, Lemma~\ref{lem:su-graphs-do-not-intersect} implies that $\Phi_z \cap \Psi_z^-=\emptyset$ and $\Phi_z \cap \Psi_z^+=\emptyset$. In particular, $s\notin \Psi_z^-\cup\Psi_z^+$.

    Now assume, by contradiction, that $d\bigl(s_k,f_x^{n_k}(t^-)\bigr)$ is not bounded away from zero. Then, after passing to a subsequence, $d\bigl(s_k,f_x^{n_k}(t^-)\bigr)\to0$. Since $\psi^-$ is invariant $f_x^{n_k}(t^-)\in\Psi^-_{f^{n_k}(x)}$; since it is continuous, $f^{n_k}(x)\to z$, and $s_k\to s$, we have $s\in\Psi^-_z$. But $s\in\Phi_z$, and this contradicts $\Phi_z\cap\Psi^-_z=\emptyset$. This shows that $d\bigl(s_k,f_x^{n_k}(t^-)\bigr)$ is indeed bounded away from zero, and the same argument shows that $d\bigl(s_k,f_x^{n_k}(t^+)\bigr)$ also is. Therefore, there exists $\delta>0$ such that, for all $k$ large enough,
	we have $d\bigl(s_k,f_x^{n_k}(t^-)\bigr)\geq\delta$ and $d\bigl(s_k,f_x^{n_k}(t^+)\bigr)\geq\delta$.

    Take $\beta>0$ such that, if $d(z,y)<\beta$, then $d_H(\Phi_z,\Phi_y)<\frac{\delta}{10}$. Take $k$ sufficiently large so that $d(z,f^{n_k}(x))<\beta$ and $d(s_k,s)<\frac{\delta}{10}$. Since $s\in\Phi_z$, there exists $r_k\in\Phi_{f^{n_k}(x)}$ such that $d(r_k,s)<\frac{\delta}{10}$, and hence $d(r_k,s_k)<\frac{\delta}{5}$. Since $s_k$ is at distance at least $\delta$ from both endpoints $f_x^{n_k}(t^-)$ and $f_x^{n_k}(t^+)$, we conclude that $r_k\in f_x^{n_k}\bigl((t^-,t^+)\bigr)$. On the other hand, since $r_k\in\Phi_{f^{n_k}(x)}$ and $\varphi$ is invariant, there exists $r\in\Phi_x$ such that $f_x^{n_k}(r)=r_k$ and, since $f_x^{n_k}$ is a homeomorphism, $r\in(t^-,t^+)$. This shows that $\Phi_x\cap(t^-,t^+)\neq\emptyset$, contradicting the choice of $t^-$ and $t^+$. 
\end{proof}

    Since the forward orbit of $(x,t)$ is contained in $K$, $\emp_n(x,t)(K^c)=0$, for every $n$. If $\eta$ is an accumulation point of $\emp_n(x,t)$, Portmanteau theorem gives $\eta(K^c)=0$. Indeed, if $\emp_{n_k}(x,t)\wto \eta$ then, since $K^c$ is open, Portmanteau implies: $$\eta(K^c)\leq \liminf_{k\to \infty} \emp_{n_k}(x,t)(K^c)=0.$$
    
    Consider the ergodic decomposition $\eta=\int \eta_e\,d\widetilde\nu(e)$. By hypothesis, $\eta_e\in\cG_{\geq0}$ for $\widetilde\nu$-almost every $e$.

    Take an ergodic component $\eta_e$ different from $\mu^-$ and $\mu^+$. Since $\eta_e\in\cG_{\geq0}$, by Proposition~\ref{prop:su-invariant-pq-graphs} there exists a continuous invariant $(p,q)$-graph $\varphi$ such that $\eta_e=\mu_\varphi$. By the claim, $\operatorname{graph}(\varphi)\cap K=\emptyset$, and, since $\mu_\varphi$ is supported on $\operatorname{graph}(\varphi)$, $\eta_e(K^c)=1$. On the other hand,
    $$
    0=\eta(K^c)=\int \eta_e (K^c)\ d\tilde{\nu}(e).
    $$ Since the integrand is nonnegative and is equal to $1$ for every ergodic component different from $\mu^-$ and $\mu^+$, it follows that $\widetilde\nu$ is supported on $\{\mu^-,\mu^+\}$. In particular, there exists $s\in [0,1]$ such that $\eta=(1-s)\mu^-+s\mu^+$, i.e., $\eta \in [\mu^-,\mu^+]$. Since $\eta$ was an arbitrary accumulation point, we conclude that $\acc\bigl(\emp_n(x,t)\bigr) \subset [\mu^-,\mu^+]$.

    Now suppose $x\notin \Lambda$. Since $x\in B(m)$ and $\Lambda=\supp(m)$ is an attractor, for $n$ large enough there exists $x'\in \Lambda$ such that $f^n(x)\in W^s_{loc}(x')$. Taking $t'=h^s_{f^n(x),x'}(f^n_x(t))$, we have that $(x',t')$ belong to the same stable leaf of $F$. In particular, their forward orbits are asymptotic, and we have
    $$\acc\bigl(\emp_n(x,t)\bigr)=\acc\bigl(\emp_n(x',t')\bigr).$$
    Since we also have $x'\in B(m)$, we can apply the previous case to $(x',t')$ to get $\mu^-,\mu^+\in\cG_{\geq0}$ such that $\acc\bigl(\emp_n(x,t)\bigr) \subset [\mu^-,\mu^+]$, as desired.
\end{proof}

%----------------------------------------------------------------------
%----------------------------------------------------------------------
\section{\texorpdfstring{$W_1$}{W1}-lengths and covering numbers}\label{section:lengths and covering numbers}
%----------------------------------------------------------------------
%----------------------------------------------------------------------

In our setting, for Lebesgue almost every $(x,t)\in B(m)\times\Sone$, every accumulation point of $(\emp_n(x,t))_n$ is a $u$-Gibbs measure projecting to $m$. In the present section, we study metric properties of objects associated with the sets $\cG_{\geq 0}$ defined in (\ref{eq:Ggeq0}) and
\[
\cG_{< 0}:=\left\{\mu:\mu\text{ is an ergodic $u$-Gibbs measure},
\ \pi_*\mu=m,
\ \lambda^c(\mu)< 0
\right\}
\]
that will help us to bound the emergence of the system. 

The first observation we need to make is that, due to Corollary~\ref{cor:linear-covering}, there exists $L>0$ such that for every $\eps>0$ we have $N_{W_1}\left(\cG_{<0},\eps\right)
\leq 1+\frac{L}{\eps}$ and $N_{W_1}\left(\cG_{\geq 0},\eps\right)
\leq 1+\frac{L}{\eps}$. As we will see below, the first estimate will be enough for us in the case of negative exponents, but for nonnegative center exponents, the second is not enough. For this, we need to study in further detail how Theorem~\ref{thm:classification} embeds $\cG_{\geq 0}$ inside $\Prob(N\times\Sone)$.

Let $\iota$ be the homeomorphism defined in~\eqref{eq:iota} and let $S:=\iota(\cG_{\geq0})\subset\widehat I$. Recall that $S$ is a compact set, but need not be the circle $\widehat I$. Nevertheless, we can extend $\iota^{-1}$ to a continuous map $\Gamma:\widehat I\to \Prob(N\times\Sone)$ as follows. 

For $s\in S$, set $\Gamma|_S(s):=\iota^{-1}(s)$. If $J$ is a connected component of $\widehat I\setminus S$, let $a_J,b_J\in S$ be its endpoints and set $\mu_J^-:=\iota^{-1}(a_J)$ and $\mu_J^+:=\iota^{-1}(b_J)$. Let $r_J:J\to (0,1)$ be the affine parametrization of $J$ such that $r_J(s)\to0$ as $s\to a_J$ and $r_J(s)\to1$ as $s\to b_J$. We also denote by $r_J$ its continuous extension to $\overline J$, so that $r_J(a_J)=0$, and $r_J(b_J)=1$. Define $\Gamma|_J(s):=(1-r_J(s))\mu_J^-+r_J(s)\mu_J^+$, for $s\in J$. Thus, on every complementary component $J$, the map $\Gamma$ parametrizes the segment joining $\mu_J^-$ and $\mu_J^+$.

\begin{lemma}\label{lemma:Gamma-cont}
	The map $\Gamma:\widehat I\to\Prob(N\times\Sone)$ is continuous.
\end{lemma}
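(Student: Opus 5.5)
We prove continuity of $\Gamma$ at every point $s\in\widehat I$, splitting into the case $s\notin S$ and the case $s\in S$.

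\emph{Points outside $S$.} If $s\notin S$, then $s$ lies in an open complementary component $J$. There $\Gamma$ is the affine map $r\mapsto(1-r)\mu_J^-+r\mu_J^+$ composed with the continuous $r_J$. Since $\Prob(N\times\Sone)$ with the weak-$*$ topology is a convex set in which convex combinations are jointly continuous, $\Gamma$ is continuous on $J$. Moreover, $\Gamma|_{\overline J}$ is continuous up to the endpoints: at $a_J$ it tends to $\mu_J^-=\iota^{-1}(a_J)$, and at $b_J$ it tends to $\mu_J^+=\iota^{-1}(b_J)$. The only delicate case is when $\widehat I\setminus S$ is a single component $J$ whose two endpoints coincide, i.e.\ $S$ is a single point. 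This is excluded by the standing assumption $\#\cG_{\geq0}\geq2$. If $a_J=b_J$ with $S$ larger this cannot happen either, since $\widehat I$ is a circle.

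\emph{Points of $S$, setup.} Let $s\in S$ and take a sequence $s_n\to s$. We show $\Gamma(s_n)\to\Gamma(s)$ by splitting the sequence into two subsequences.
\begin{itemize}
\item If $s_n\in S$, then $\Gamma(s_n)=\iota^{-1}(s_n)\to\iota^{-1}(s)$, because $\iota^{-1}:S\to\cG_{\geq0}$ is continuous by Proposition~\ref{prop:iota-homeo}.
\item If $s_n\in J_n$ for some complementary component $J_n$, then $\Gamma(s_n)$ is a convex combination of $\iota^{-1}(a_{J_n})$ and $\iota^{-1}(b_{J_n})$.
\end{itemize}

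\emph{Points of $S$, components.} In the second case we distinguish two situations.
\begin{itemize}
\item If infinitely many $s_n$ lie in one fixed component $J$, then $s$ must be an endpoint of $J$, and continuity follows from the endpoint continuity established above.
\item Otherwise the components $J_n$ are pairwise distinct along a subsequence. Since they are disjoint subintervals of $\widehat I$, their lengths tend to $0$. As $s_n\in J_n$ and $s_n\to s$, both endpoints $a_{J_n},b_{J_n}$ converge to $s$, and these endpoints lie in $S$.
\end{itemize}
By continuity of $\iota^{-1}$ on the compact set $S$, both $\mu_{J_n}^-$ and $\mu_{J_n}^+$ converge to $\iota^{-1}(s)$. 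Fix a metric inducing the weak-$*$ topology, say $W_1$. Then convexity of $\nu\mapsto W_1(\nu,\mu)$ (mix the optimal couplings) gives
\[
W_1\bigl(\Gamma(s_n),\iota^{-1}(s)\bigr)\leq\max\bigl\{W_1(\mu_{J_n}^-,\iota^{-1}(s)),W_1(\mu_{J_n}^+,\iota^{-1}(s))\bigr\}\to0 .
\]

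\emph{Identification point and main obstacle.} Some care is needed at the identified point $[a_1]=[a_2]$. There the neighborhoods are unions of one-sided intervals near $a_1$ and near $a_2$. Since $[a_1]=\iota(\mu^0)\in S$, the argument above applies verbatim on each side. The main obstacle is the shrinking-components case; it rests entirely on the fact that infinitely many disjoint components force their lengths, and hence their endpoint separations, to go to $0$, combined with the uniform continuity of $\iota^{-1}$ on the compact set $S$.
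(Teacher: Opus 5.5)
Your proof is correct and follows the paper's argument essentially step by step: continuity of $\Gamma|_S=\iota^{-1}$ from Proposition~\ref{prop:iota-homeo}, continuity on each $\overline J$ via the affine parametrization, and at points of $S$ the same trichotomy (terms in $S$, infinitely many terms in one fixed component, or pairwise distinct components whose diameters shrink so both endpoints converge to $s$), finished by convexity of $W_1$. The differences are only cosmetic. You bound $W_1(\Gamma(s_n),\iota^{-1}(s))$ by a maximum rather than by the convex combination of the two distances, and on $J$ you invoke joint continuity of convex combinations rather than the paper's explicit estimate $W_1(\Gamma(s),\Gamma(s'))\leq |r-r'|\,W_1(\mu_J^-,\mu_J^+)$.
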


\begin{proof}
    To prove continuity, first observe that $\Gamma|_S=\iota^{-1}$ is already continuous, since $\iota$ is a homeomorphism onto $S$. Moreover, if $J$ is a connected component of $\widehat I\setminus S$, then $\Gamma$ is also continuous on $J$: for $s,s'\in J$, writing $r=r_J(s)$ and $r'=r_J(s')$, the convexity of $W_1$ gives $W_1\bigl(\Gamma(s),\Gamma(s')\bigr) \leq |r-r'|\,W_1(\mu_J^-,\mu_J^+)$, and the same estimate shows that $\Gamma(s)\to\mu_J^-=\Gamma(a_J)$ as $s\to a_J$, and $\Gamma(s)\to\mu_J^+=\Gamma(b_J)$ as $s\to b_J$. This shows that $\Gamma$ is continuous not only in $J$, but on $\overline J$.

	It remains to check continuity at points of $S$ at which different connected components of $\widehat I\setminus S$ may accumulate. Let $s_n\to s$ with $s\in S$. 
    
    If for all $n$, $s_n\in S$, the conclusion follows from
	the continuity of $\iota^{-1}$ on $S$. So we may consider a subsequence for which $s_n\notin S$, and let $J_n=(a_n,b_n)$ be the connected component of $\widehat I\setminus S$ containing $s_n$.

	Suppose first that, after passing to a subsequence, all the $J_n$
	are equal to a fixed component $J$. Since $s_n\to s\in S$, the
	point $s$ is one of the endpoints of $J$ and the continuity of
	$\Gamma$ on $\overline J$, proved above, gives $\Gamma(s_n)\to\Gamma(s)$.

	Therefore, the truly new case is when the components $J_n$ are pairwise distinct. Since they are pairwise disjoint open arcs of the (compact) circle $\widehat I$, we have $\operatorname{diam}(J_n)\to0$. Since $s_n\in J_n$ and $s_n\to s$, it follows that $a_n\to s$, $b_n\to s$. Then, as $a_n,b_n,s\in S$ and $\Gamma|_S=\iota^{-1}$ is continuous,
	we obtain $\Gamma(a_n)\to\Gamma(s)$ and $\Gamma(b_n)\to\Gamma(s)$.

	Since $s_n \in J_n$, for some $r_n\in(0,1)$ we have that $\Gamma(s_n)=(1-r_n)\Gamma(a_n)+r_n\Gamma(b_n)$. By convexity of $W_1$,
	$$
	W_1\bigl(\Gamma(s_n),\Gamma(s)\bigr)
	\leq
    (1-r_n)W_1\bigl(\Gamma(a_n),\Gamma(s)\bigr)+r_nW_1\bigl(\Gamma(b_n),\Gamma(s)\bigr),
	$$
	and the right-hand side converges to zero. Therefore $\Gamma(s_n)\to\Gamma(s)$, and this proves the continuity of $\Gamma$ on all of $\widehat I$.
\end{proof}

%==================================================================
\subsection{A covering bound for \texorpdfstring{$\Gamma(\widehat{I})$}{GammaI}}\label{subsection:bound for Gamma(I)}
%==================================================================

 The main goal of this section is to bound the number of $W_1$-balls of radius $\eps>0$ that we need to cover $\Gamma(\widehat{I})$.

\begin{theorem}\label{thm:Gamma-covering}
	There exists a constant $L>0$ such that, for every $\eps>0$, $N_{W_1}\bigl(\Gamma(\widehat I),\eps\bigr) \leq 1+\frac{L}{\eps}$.
\end{theorem}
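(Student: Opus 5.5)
The plan is to show that the closed curve $\Gamma(\widehat I)$ has finite $W_1$-length, bounded by a constant $L$ that does not depend on anything else, and then to cover it by cutting it into arcs of length at most $\eps$. We argue through partitions. Take any finite family of points $s_0<s_1<\dots<s_K$ in $I=[a_1,a_2]$, in positive order, with $s_0=a_1$ and $s_K=a_2$; this is a cyclic partition of $\widehat I$. We aim to prove
\[
\sum_{r=0}^{K-1}W_1\bigl(\Gamma(s_r),\Gamma(s_{r+1})\bigr)\le L:=\frac{\ell(\Sone)}{N}.
\]
Once this holds, the covering is straightforward. Since $\Gamma$ is continuous by Lemma~\ref{lemma:Gamma-cont}, the function $s\mapsto \mathrm{len}(\Gamma|_{[a_1,s]})$ is nondecreasing and bounded by $L$. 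A greedy choice of points $s_r$, moving forward by $W_1$-length $\eps$ each time, splits $\widehat I$ into at most $1+L/\eps$ arcs. Each of these arcs lies in the $W_1$-ball of radius $\eps$ around its initial point.

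\textbf{Reduction to points of $S$.} On each complementary component $J$, $\Gamma$ is an affine segment, and convexity of $W_1$ gives
\[
W_1\bigl((1-r)\mu^-+r\mu^+,(1-r')\mu^-+r'\mu^+\bigr)\le |r-r'|\,W_1(\mu^-,\mu^+).
\]
So the length of $\Gamma|_{\overline J}$ is at most $W_1(\mu_J^-,\mu_J^+)$. Using the triangle inequality, we may therefore replace every partition point lying in some $J$ by the two endpoints $a_J,b_J$. Doing this only increases the sum, so it suffices to bound sums over partitions consisting of points of $S$. Such points correspond to measures $\iota^{-1}(s_r)\in\cG_{\geq0}$, which are distinct except that the two endpoints $a_1$ and $a_2$ both correspond to $\mu^0$.

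\textbf{The key step.} For distinct $\mu^1,\dots,\mu^{K-1}\in\cG_{\geq0}\setminus\{\mu^0\}$ ordered by $z(\mu^r)$ inside $(a_1,a_2)$, we need
\[
\sum_{r=0}^{K-1} W_1(\mu^r,\mu^{r+1})\le \frac{\ell(\Sone)}N, \qquad \mu^K=\mu^0.
\]
This is exactly the coupling argument of Proposition~\ref{prop:cyclic-length}. The main obstacle is that the cyclic order used there must agree with the order given by $\iota$, that is, by the points $z(\mu^r)$ at $x_0$. In that proof the ordering of $z_j^r(\omega)$ versus $z_j^s(\omega)$ is the same for $m$-a.e.\ $\omega$ and every $j$. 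We must check that it matches the order of the points at the specific fiber $x_0$.

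To settle this we use continuity and $su$-invariance. The graphs are continuous and pairwise disjoint on every fiber, by Lemma~\ref{lem:su-graphs-do-not-intersect}. Hence the relative order of the point of $\Psi^r_x$ and the point of $\Psi^s_x$ in the arc between consecutive points of $\Phi^0_x$ is locally constant in $x$, and so constant along a.e.\ $x$ accumulating at $x_0$. It therefore agrees with the order at $x_0$. Alternatively, we can run the coupling directly with the continuous graphs, which are defined at every point.

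With the orders matched, the fibrewise telescoping gives the bound: the arcs $[z_j^r,z_j^{r+1}]$ tile $[\phi_j,\phi_{j+1}]$, and summing over $j$ gives the whole circle. This yields the bound $L=\ell(\Sone)/N$, uniformly over all partitions, and concludes the proof.
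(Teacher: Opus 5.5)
Your proposal is correct and follows essentially the paper's route: bound the $W_1$-length of $\Gamma$ by reducing, via convexity on the complementary arcs, to partitions by points of $S$, apply the coupling of Proposition~\ref{prop:cyclic-length}, and then deduce the $1+L/\eps$ covering bound (you use a greedy arc-length partition where the paper uses a maximal $\eps$-separated set, which makes no real difference). Your explicit check that the $m$-a.e.\ order in Proposition~\ref{prop:cyclic-length} agrees with the order of the points $z(\mu)$ at $x_0$, via continuity and fibrewise disjointness from Lemma~\ref{lem:su-graphs-do-not-intersect}, is a step the paper's Lemma~\ref{lemma:finite-length} simply asserts, so your version is slightly more careful there.
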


The theorem follows once we prove two lemmas. Before doing so, it will be convenient to cut the circle $\widehat I$ at the point $[a_1]=[a_2]=\iota(\mu^0)$. Let $q:I=[a_1,a_2]\to\widehat I$ be the quotient map. From now on, by a slight abuse of notation, we identify $S$ with its preimage $q^{-1}(S)\subset I$, and we also denote by $\iota^{-1}$ the map $\iota^{-1}:S\to\Prob(N\times\Sone)$, defined as $s\mapsto \iota^{-1}(q(s))$. Notice that, in particular, $\iota^{-1}(a_1)=\iota^{-1}(a_2)=\mu^0$.

Similarly, we will identify $\Gamma$ with the map $\Gamma\circ q:I\to\Prob(N\times\Sone)$, and we will continue to denote it by $\Gamma$.

Thus, from now on, we regard $S$ as a compact subset of the interval
$I=[a_1,a_2]$ containing both endpoints, and $\iota^{-1}$ and $\Gamma$
as maps defined on $S$ and $I$, respectively. Notice that this identification does not change the image of $\Gamma$ in $\Prob(N\times\Sone)$, since $q$ is one-to-one on $(a_1,a_2)$ and the two endpoints satisfy $\Gamma(q(a_1))=\Gamma(q(a_2))=\mu^0$. In particular, $\Gamma(I)=\Gamma(\widehat I)$, and this change of parametrization does not affect the covering numbers considered below.

Recall that, if $E\subset[a,b]$ and $\gamma: E\to X$ is a map from $E$ into a metric space $(X,d)$, the length of $\gamma$ is defined as
\[
\operatorname{Len}_d(\gamma)
:=
\sup
\left\{
\sum_{j=0}^{m-1}
d\bigl(\gamma(s_j),\gamma(s_{j+1})\bigr)
:
s_0<\cdots<s_m,\quad s_j\in E
\right\}.
\]
Of course, we say that $\gamma$ has finite length if $\operatorname{Len}_d(\gamma)<\infty$.

\begin{lemma}\label{lemma:finite-length}
The map $\iota^{-1}$ has finite $W_1$-length in $\bigl(\Prob(N\times\Sone),W_1\bigr)$, i.e., there exists $L>0$ such that, for every finite family $s_0<\cdots<s_m$, with $s_j\in S$, we have
\[
\sum_{j=0}^{m-1}W_1\left(\iota^{-1}(s_j),\iota^{-1}(s_{j+1})\right)\leq L.
\]
\end{lemma}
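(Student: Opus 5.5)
The plan is to adapt the coupling argument of Proposition~\ref{prop:cyclic-length}. The only difference is that we now work with continuous $su$-invariant graphs over $\Lambda$ instead of measurable graphs. The ordering of the graphs will be read off from the parameter $s\in S\subset[a_1,a_2]$ at the transitive point $x_0$.

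Fix $s_0<\cdots<s_m$ in $S$ and write $\mu^j:=\iota^{-1}(s_j)=\mu_{\psi^j}$, where $\psi^j$ is the continuous $su$-invariant $(p,q)$-graph from Proposition~\ref{prop:su-invariant-pq-graphs}. Points with $s_j\in\{a_1,a_2\}$ correspond to $\mu^0$. Repeated measures contribute zero distance, so we may assume the $\mu^j$ are distinct, except possibly that $s_0=a_1$ and $s_m=a_2$ both give $\mu^0$.

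\textbf{Step 1 (global ordering).} For $x\in\Lambda$ let $\phi_1(x),\dots,\phi_N(x)$ be the points of $\Phi^0_x$ in cyclic order. Choose the labels continuously along the dense orbit of $x_0$, so that $\phi_1(x_0)=a_1$ and $\phi_2(x_0)=a_2$; this uses continuity and the fact that the points are distinct. For each $j$ and $i$, Theorem~\ref{thm:classification} and Lemma~\ref{lem:interlacing-at-x0} (applied at the transitive point $x$, for $m$-a.e.\ $x$) give a unique point $z^j_i(x)\in\Psi^j_x\cap(\phi_i(x),\phi_{i+1}(x))$. By construction $z^j_1(x_0)=s_j$, so at $x_0$ the points are ordered as $a_1\le z^0_1<\cdots<z^m_1\le a_2$. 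We then claim that this order persists for $m$-a.e.\ $x$ and every $i$. As in Proposition~\ref{prop:cyclic-length}, the indicator $\chi_{j,k}$ of ``$z^j_i<z^k_i$'' is $G$-invariant on $\Phi^0$ because the fiber maps preserve orientation, and hence it is $\mu^0$-a.e.\ constant. To identify the constant with its value at $x_0$, we use Lemma~\ref{lem:su-graphs-do-not-intersect}: the graphs never cross, so continuity lets us transport the order along the dense orbit of $x_0$ and its limits. Alternatively, we can apply Proposition~\ref{prop:cyclic-length}'s ordering directly and check that it agrees with the order at $x_0$.

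\textbf{Step 2 (coupling).} For consecutive indices, define the fiberwise coupling
\[
\pi^j:=\int_\Omega\frac1N\sum_{i=1}^N\delta_{((x,z^j_i(x)),(x,z^{j+1}_i(x)))}\,dm(x).
\]
This is a coupling of $\mu^j$ and $\mu^{j+1}$. Hence
\[
W_1(\mu^j,\mu^{j+1})\le\frac1N\int\sum_i\ell\bigl([z^j_i(x),z^{j+1}_i(x)]\bigr)\,dm(x).
\]

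\textbf{Step 3 (telescoping).} Summing over $j$ and using Step 1, the arcs $[z^j_i,z^{j+1}_i]$ for fixed $i$ and $x$ are nonoverlapping and contained in $[\phi_i(x),\phi_{i+1}(x)]$. Summing over $i$ then gives at most $\ell(\Sone)$. Therefore
\[
\sum_j W_1(\mu^j,\mu^{j+1})\le\frac{\ell(\Sone)}{N}=:L,
\]
and this bound is independent of the partition.

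The main obstacle is Step 1, namely showing that the ordering at the single fiber over $x_0$, which is what defines $\iota$, coincides with the ergodically forced ordering valid $m$-a.e. Compared with Proposition~\ref{prop:cyclic-length}, the new point is precisely this compatibility with the given order $s_0<\cdots<s_m$, rather than the freedom to choose any cyclic order. I would handle it through continuity of the graphs and Lemma~\ref{lem:su-graphs-do-not-intersect}: since $\Psi^j_x\cap\Psi^k_x=\emptyset$ for every $x$, the relative position of $z^j_i(x)$ and $z^k_i(x)$ inside a gap of $\Phi^0_x$ is locally constant on $\Lambda$. The orbit of $x_0$ is dense, and orientation is preserved along it, so the relative position is determined by its value at $x_0$.
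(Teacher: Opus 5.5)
Your proposal is correct and is essentially the paper's argument: the paper applies Proposition~\ref{prop:cyclic-length} directly to the family $\iota^{-1}(s_0),\dots,\iota^{-1}(s_m)$ and drops the closing term of the cyclic sum, while you rerun its coupling-and-telescoping proof with $\Phi^0$ as the reference graph, which gives the same constant $\ell(\Sone)/N$. What you add is an explicit check that the order on the single fiber over $x_0$ agrees with the $m$-a.e.\ order forced by ergodicity (via a.e.\ constancy of $\chi_{j,k}$, continuity of the graphs at $x_0$, and density of full $m$-measure sets), which the paper only asserts; that check alone suffices, so the global continuous labeling along the orbit of $x_0$ is not needed.
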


\begin{proof}
Take a finite family $s_0<\cdots<s_m$, with $s_k\in S$, and set $\mu^k:=\iota^{-1}(s_k)$, for $k=0,\ldots,m$. Recall that, if $s_k\in(a_1,a_2)$, then by definition of $\iota$, $s_k=z(\mu^k)$ (where $z(\mu^k)$ is the unique point of the graph corresponding to $\mu^k$ contained in the reference interval $(a_1,a_2)$). Hence, the order
$$
s_0<\cdots<s_m
$$
is precisely the order induced by the positive cyclic order of the
corresponding graphs in the reference interval $(a_1,a_2)$. This is the
cyclic order used in Proposition~\ref{prop:cyclic-length}.

Suppose first that the family does not contain both endpoints $a_1$
and $a_2$. Then the measures $\mu^0,\ldots,\mu^m$ are all distinct. Proposition~\ref{prop:cyclic-length}, applied to this
finite family with the cyclic order described above, gives
$$
\sum_{k=0}^{m-1}
W_1(\mu^k,\mu^{k+1})
\leq L,
$$
since the sum on the left is obtained from the cyclic sum in that
proposition by omitting the last term joining $\mu^m$ back to
$\mu^0$.

Suppose now that the family contains both endpoints $a_1,a_2$ of the reference interval. Since the points are ordered, we have $s_0=a_1$, and $s_m=a_2$. By our convention, after cutting open $\widehat I$,
$\mu^0=\iota^{-1}(a_1)
=
\iota^{-1}(a_2)=\mu^m.
$
Then, the measures $\mu^0,\ldots,\mu^{m-1}$ are distinct, and in this case
$$
\sum_{k=0}^{m-1}
W_1(\mu^k,\mu^{k+1})
$$
is exactly the cyclic sum associated to this family. Therefore,
Proposition~\ref{prop:cyclic-length} again gives
$$
\sum_{k=0}^{m-1}
W_1(\mu^k,\mu^{k+1})
\leq L.
$$

Therefore, in either case,
$$
\sum_{k=0}^{m-1}
W_1\left(
\iota^{-1}(s_k),\iota^{-1}(s_{k+1})
\right)
\leq L.
$$
Since the finite family $s_0<\cdots<s_m$ in $S$ was arbitrary, we
conclude that $\operatorname{Len}_{W_1}(\iota^{-1})\leq L<\infty$.
\end{proof}

\begin{lemma}\label{lemma:Gamma-finite-length}
The $W_1$-length of the map $\Gamma:I\to\Prob(N\times\Sone)$ satisfies $\operatorname{Len}_{W_1}(\Gamma)\leq\operatorname{Len}_{W_1}(\iota^{-1})$. In particular, $\Gamma$ has finite $W_1$-length.
\end{lemma}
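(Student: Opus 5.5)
The plan is to bound an arbitrary partition sum for $\Gamma$ by a partition sum for $\iota^{-1}$. Fix $s_0<\cdots<s_m$ in $I$. Refine the partition: for each $j$ such that $s_j$ and $s_{j+1}$ do not lie in the closure of a common component $J$ of $I\setminus S$, insert extra points of $S$ between them. Concretely, if $s_j\in J=(a_J,b_J)$, insert $b_J$. If $s_{j+1}\in J'=(a_{J'},b_{J'})$, insert $a_{J'}$. By the triangle inequality, refining only increases the sum, so it suffices to treat partitions in which every consecutive pair $s_j<s_{j+1}$ either lies in $S$ or lies in $\overline J$ for a single complementary component $J$.

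Next, consider a maximal run of consecutive points of the refined partition contained in $\overline J$, say $a_J\le s_k<\cdots<s_l\le b_J$. On $\overline J$ we have $\Gamma(s)=(1-r_J(s))\mu_J^-+r_J(s)\mu_J^+$, with $r_J$ monotone. By convexity of $W_1$, as already used in Lemma~\ref{lemma:Gamma-cont},
\[
W_1\bigl(\Gamma(s_i),\Gamma(s_{i+1})\bigr)\le |r_J(s_{i+1})-r_J(s_i)|\,W_1(\mu_J^-,\mu_J^+).
\]
Summing over the run telescopes, giving a contribution at most $W_1(\mu_J^-,\mu_J^+)=W_1\bigl(\iota^{-1}(a_J),\iota^{-1}(b_J)\bigr)$. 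Here we may assume, after a further refinement, that the run starts at $a_J$ and ends at $b_J$. This is harmless: each endpoint is either already in the partition, or it is the inserted point, or (at the very ends $s_0,s_m$) we simply extend the partition to include it, which again only increases the sum.

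Replacing each such run by its two endpoints yields a partition $t_0<\cdots<t_{m'}$ of points of $S$. Its $\iota^{-1}$-sum dominates the original $\Gamma$-sum, since the $S$-to-$S$ terms are unchanged because $\Gamma|_S=\iota^{-1}$. By Lemma~\ref{lemma:finite-length} this sum is at most $\operatorname{Len}_{W_1}(\iota^{-1})$. Taking the supremum over partitions gives the claim.

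The only delicate step is the bookkeeping in the refinement. One must check that inserting endpoints of components never decreases the sum, which is the triangle inequality. One must also check that the interval convention, with $a_1,a_2\in S$, ensures every complementary component has both endpoints in $S$. This holds because $S$ is compact and contains $a_1,a_2$.
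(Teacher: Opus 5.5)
Your proposal is correct and follows essentially the same route as the paper. You insert the endpoints $a_J,b_J$ of every complementary component the partition meets, which can only increase the sum by the triangle inequality. You then use convexity of $W_1$ to telescope each run inside $\overline J$ down to $W_1(\Gamma(a_J),\Gamma(b_J))$, and bound the resulting partition of $S$ by $\operatorname{Len}_{W_1}(\iota^{-1})$.

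One small wording point: that last bound holds by the definition of $\operatorname{Len}_{W_1}(\iota^{-1})$ itself; Lemma~\ref{lemma:finite-length} is only what makes that length finite.
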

\begin{proof}
    Let $s_0<\dots<s_m$ be a finite family of points $s_k\in I$. Our goal is to bound the sum $\sum_{k=0}^{m-1} W_{1}(\Gamma(s_k),\Gamma(s_{k+1}))$ uniformly over all such families.

    First, observe that if two consecutive points of the family, say $s_k,s_{k+1}$, lie in the same connected component $J=(a_J,b_J)$ of $I\setminus S$, then, if we write $\Gamma(s)=(1-r_J(s))\mu_J^-+r_J(s)\mu_J ^+$, we obtain 
    $$
    W_{1}(\Gamma(s_k),\Gamma(s_{k+1}))\leq |r_J(s_{k+1})-r_J(s_k)|W_1(\mu_J^-,\mu_J^+),
    $$
    where $\mu_J^-=\iota^{-1}(a_J)=\Gamma(a_J)$ and $\mu_J^+=\iota^{-1}(b_J)=\Gamma(b_J)$. In fact, if $s_{k_0}<\dots<s_{k_r}$ are consecutive points of the family all in the same interval $\overline{J}$, then 
    $$
    \sum_{i=0}^{r-1}W_{1}(\Gamma(s_{k_i}),\Gamma(s_{k_{i+1}}))\leq W_1(\Gamma(a_J),\Gamma(b_J)),
    $$
    since $\sum_{i=0}^{r-1}|r_{J}(s_{k_i})-r_{J}(s_{k_{i+1}})|=r_{J}(s_{k_r})-r_{J}(s_{k_{0}})\leq 1$.

    Observe that it is enough to consider points in $S$. Start by adding to the family $s_0<\dots<s_m$ the endpoints $a_J,b_J$ of every
    connected component $J$ of $I\setminus S$ which contains at least
    one of the points $s_k$. Since the original family is finite, only
    finitely many points are added. By adding points to the family, we can only increase the corresponding sum of consecutive $W_1$-distances. So if we can bound this increased family, we can bound the previous one. For each such component $J$, the previous estimate shows that the whole contribution of the points contained in $\overline J$ is bounded by $W_1(\Gamma(a_J),\Gamma(b_J))$. Hence, after adding the endpoints, we may discard all the points in the interior of $J$ when looking for an upper bound.

    After doing this for every connected component met by the family,
    we obtain a finite ordered family $u_0<\dots<u_q$, with $u_i\in S$,
    such that
    $$
    \sum_{k=0}^{m-1}
    W_1(\Gamma(s_k),\Gamma(s_{k+1}))
    \leq
    \sum_{i=0}^{q-1}
    W_1(\Gamma(u_i),\Gamma(u_{i+1})).
    $$
    
    Since every $u_i\in S$, $\Gamma(u_i)=\iota^{-1}(u_i)$ for $i=0,\dots,q$. So, $$\sum_{i=0}^{q-1}W_1(\Gamma(u_i),\Gamma(u_{i+1}))=\sum_{i=0}^{q-1}W_1(\iota^{-1}(u_i),\iota^{-1}(u_{i+1}))\leq \operatorname{Len}_{W_1}(\iota^{-1}),$$ and then $\sum_{k=0}^{m-1} W_1(\Gamma(s_k),\Gamma(s_{k+1}))\leq \operatorname{Len}_{W_1}(\iota^{-1})$.

    Noticing that this holds for every finite family $s_0<\cdots<s_m$ in $I$ and taking the supremum gives $\operatorname{Len}_{W_1}(\Gamma)\leq \operatorname{Len}_{W_1}(\iota^{-1})$. By Lemma~\ref{lemma:finite-length}, $\Gamma$ has finite $W_1$-length.
\end{proof}

We are finally ready to prove Theorem~\ref{thm:Gamma-covering}. Before getting into it, let $L:=\operatorname{Len}_{W_1}(\Gamma)<\infty$ and recall that we have $\Gamma(I)=\Gamma(\widehat I)$.

\begin{proof}[Proof of Theorem~\ref{thm:Gamma-covering}]
    Fix $\eps>0$. Since $I$ is compact and $\Gamma$ is continuous, $\Gamma(I)$ is compact. Let $\{\nu_1,\ldots,\nu_q\}\subset\Gamma(I)$ be a maximal $\eps$-separated set, i.e., $W_1(\nu_i,\nu_j)\geq\eps$ for every $i\neq j$.

    Now, for each $i$, choose $s_i\in I$ such that $\Gamma(s_i)=\nu_i$ and, after relabeling if necessary, assume that $s_1<\cdots<s_q$. Since $L=\operatorname{Len}_{W_1}(\Gamma)$, we have that $\sum_{i=1}^{q-1}
    W_1\bigl(\Gamma(s_i),\Gamma(s_{i+1})\bigr)\leq L$, so 
    $$
    (q-1)\eps\leq \sum_{i=1}^{q-1} W_1(\nu_i,\nu_{i+1}) = \sum_{i=1}^{q-1} W_1\bigl(\Gamma(s_i),\Gamma(s_{i+1})\bigr) \leq L,
    $$
    i.e., $(q-1)\eps\leq L$, or equivalently, $q\leq1+\frac{L}{\eps}$.

    To finish the proof, we observe that, by maximality of the $\eps$-separated set, the balls $B_{W_1}(\nu_i,\eps)$, $i=1,\ldots,q$, must cover $\Gamma(I)$; otherwise there would exist $\nu\in\Gamma(I)$ such that $W_1(\nu,\nu_i)\geq\eps$ for every $i$, contradicting maximality. Hence, $N_{W_1}\bigl(\Gamma(I),\eps\bigr) \leq q \leq 1+\frac{L}{\eps}$ and, since $\Gamma(I)=\Gamma(\widehat I)$, theorem follows.
\end{proof}

%==================================================================
\section{An upper bound for the emergence of \texorpdfstring{$F$}{F}}\label{section:upper-bound-emergence}
%==================================================================

In this section, we prove Theorem~\ref{thm:emergencia-ordem-pol}, which implies Theorem~\ref{thm:low-emergence}. Before proving the theorem, we recall the following general criterion for low emergence, proved in~\cite[Proposition~2.6]{CostaSantiago2026}.

\begin{proposition}\label{prop:criterium-low-emerg}
Let $f:X \to X$ be a continuous map of a compact metric space, let $\mu$ be a Borel probability measure on $X$, and let $K\subset\Prob(X)$ be compact. Assume that, for $\mu$-almost every $x$, every accumulation point of $\bigl(\emp_n(x)\bigr)_{n\geq1}$ belongs to $K$. Then, for every $\eps>0$,
$$
\mathcal E_\mu(f)(\eps)
\leq
N_{W_1}(K,\eps).
$$
In particular, if $K$ has polynomial $W_1$-covering numbers, then $f$ has low emergence with respect to $\mu$.
\end{proposition}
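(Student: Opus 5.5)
The statement is Proposition~\ref{prop:criterium-low-emerg}. I would take a finite $\eps$-net of $K$ and show that almost every empirical sequence eventually sits close to it, then integrate.

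\textbf{Choosing the net.} Fix $\eps>0$ and set $N:=N_{W_1}(K,\eps)$. Choose $\nu_1,\dots,\nu_N\in\Prob(X)$ such that the balls $B_{W_1}(\nu_i,\eps)$ cover $K$. Since these balls are open, the covering also works for nearby points: for each $\eta\in K$ we have $\min_i W_1(\eta,\nu_i)<\eps$.

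\textbf{Pointwise bound.} Define the continuous function $D(\eta):=\min_i W_1(\eta,\nu_i)$ on $\Prob(X)$. Fix a point $x$ in the full-measure set where every accumulation point of $(\emp_n(x))$ lies in $K$. I claim
\[
\limsup_n D(\emp_n(x))\le \sup_{\eta\in K}D(\eta)=:\eps'\le \eps.
\]
To see this, note that $\Prob(X)$ is compact. Take a subsequence realizing the $\limsup$, and pass to a further convergent subsequence with limit $\eta$. By hypothesis $\eta\in K$, and continuity of $D$ gives the claim. Moreover $\eps'<\eps$, because $D$ attains its maximum on the compact set $K$; but we only need $\le\eps$.

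\textbf{Integration.} We have $D(\emp_n(x))\le \operatorname{diam}(X)$, since $W_1$ is bounded by the diameter. So we may apply reverse Fatou, i.e.\ Fatou's lemma applied to $\operatorname{diam}(X)-D$, to obtain
\[
\limsup_n\int D(\emp_n(x))\,d\mu(x)\le\int\limsup_n D(\emp_n(x))\,d\mu(x)\le\eps.
\]
Measurability of $x\mapsto D(\emp_n(x))$ follows because $x\mapsto\emp_n(x)$ is continuous. This gives $\mathcal E_\mu(f)(\eps)\le N$. If $N_{W_1}(K,\eps)=O(\eps^{-\alpha})$, then the $\limsup$ of $\log\mathcal E/(-\log\eps)$ is at most $\alpha$, which is low emergence.

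\textbf{Main obstacle.} The one delicate point is the interchange of $\limsup$ and integral, which is handled by the uniform bound and reverse Fatou. No quantitative rate of convergence is needed, which is why the conclusion holds with the exact covering number rather than, say, $N_{W_1}(K,\eps/2)$.
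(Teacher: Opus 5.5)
Your proof is correct and complete. The pointwise bound $\limsup_n D(\emp_n(x))\le\sup_K D\le\eps$, obtained from compactness of $\Prob(X)$ and continuity of $D$, followed by reverse Fatou with the uniform bound $D\le\operatorname{diam}(X)$, gives exactly the required estimate; whether the covering balls are open or closed does not matter, since only $\sup_K D\le\eps$ is used. The paper does not prove this statement itself but imports it from \cite[Proposition~2.6]{CostaSantiago2026}, so your argument is a self-contained proof of that cited criterion rather than an alternative to a proof given in the paper.
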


With that proposition at hand, we need to understand the possible accumulation points of $\bigl(\emp_n(x,t)\bigr)_{n\geq1}$ for $\Leb$-a.e. $(x,t)\in B(m)\times \Sone$. This is the content of Theorem~\ref{thm.empirical}, which follows directly from the following proposition and Lemma~\ref{lem:empirical-segment}.

\begin{proposition}\label{prop:possible-accum}
    For Lebesgue-a.e. $(x,t)\in B(m)\times \Sone$ we have that either $\emp_n(x,t)\wto \mu \in \cG_{<0}$, or every accumulation measure of $\bigl(\emp_n(x,t)\bigr)_{n\geq1}$ has ergodic decomposition supported on $\cG_{\geq 0}$. This last condition means that, if $\eta$ is such an accumulation measure, then $\eta=\int \eta_e\,d\widetilde\nu(e)$ with $\widetilde\nu$-a.e. $\eta_e\in \cG_{\geq 0}$. 
\end{proposition}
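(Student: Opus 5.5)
We will combine Remark~\ref{rmk:acc-u-gibbs} with a dichotomy based on whether some accumulation point has an ergodic component of negative center exponent. The starting point is as follows. By Remark~\ref{rmk:acc-u-gibbs} and Lemma~\ref{lem:base-marginal}, there is a full Lebesgue measure set $\mathcal{R}\subset B(m)\times\Sone$ with the following property: for every $(x,t)\in\mathcal{R}$, every accumulation point $\eta$ of $\emp_n(x,t)$ is a $u$-Gibbs measure projecting to $m$, and its ergodic components are $u$-Gibbs. Since $m$ is ergodic, these components project to $m$. Hence $\widetilde\nu$-a.e. component lies in $\cG_{<0}\cup\cG_{\geq0}$.

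The task is therefore to show that, for a.e. point, either no accumulation point charges $\cG_{<0}$, or $\emp_n\wto\mu$ for a single $\mu\in\cG_{<0}$.

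The key input is the basin structure of measures with negative center exponent. Let $\mu\in\cG_{<0}$. Its center exponent is negative and its unstable conditionals are absolutely continuous. Pesin theory then gives that $\mu$-a.e. point has a Pesin stable manifold of dimension $\dim E^s+1$. Absolute continuity of the stable lamination shows that $B(\mu)$ has positive Lebesgue measure and contains an open-in-measure neighborhood in the following sense. There is a set $D_\mu\subset\supp\mu$ with $\mu(D_\mu)>0$, and uniform sizes of Pesin stable disks, such that the union $\mathcal{B}_\mu$ of these disks through $D_\mu$ is contained in $B(\mu)$ and has positive volume. This is the standard Bonatti--Viana/Tsujii-type argument, and I would cite it rather than reprove it.

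The main step is to show that for Lebesgue-a.e. $z\in\mathcal{R}$, if some accumulation point $\eta$ has an ergodic component $\mu\in\cG_{<0}$ with positive weight, then the orbit of $z$ actually enters $B(\mu)$. I would argue via a density/return argument. We have $\eta(D_\mu)>0$, and the Pesin block can be taken compact with uniform disk size. Then along the subsequence $n_k$ the orbit of $z$ spends a positive proportion of time in a small neighborhood $U$ of the block. Disks vary continuously on the block. So if the orbit point is close enough to a point of the block along the center-stable directions, it falls in a stable disk and hence in $B(\mu)$. The difficulty is that being close to the block does not mean lying on a stable disk; the transverse direction is $E^u$.

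To handle this, I would use the usual Lebesgue argument on unstable disks. Consider the set $Z$ of points that visit $U$ with positive upper frequency but never enter $\bigcup_{\mu}\mathcal{B}_\mu$. If $Z$ had positive volume, pick a Lebesgue density point. Iterate a small $u$-disk through it, using the distortion bounds coming from $C^{1+\alpha}$. The iterated disks carry almost-full relative density of $Z$. These images accumulate on $u$-disks inside $\supp\mu$. Since $\mu$ has absolutely continuous unstable conditionals, those $u$-disks meet the stable disks of $\mathcal{B}_\mu$ in positive leaf measure. This forces a positive fraction of $Z$ into $\mathcal{B}_\mu$, a contradiction. I expect this step to be the main obstacle: controlling distortion and making the accumulation of iterated disks quantitative enough to intersect $\mathcal{B}_\mu$.

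Once the orbit of $z$ enters $B(\mu)$, we get $\emp_n(z)\wto\mu$, since $B(\mu)$ is invariant. This is the first alternative. Otherwise, every accumulation point gives zero weight to $\cG_{<0}$, so its ergodic decomposition is supported on $\cG_{\geq0}$, which is the second alternative. Finally, Lebesgue-a.e. is preserved throughout, since we only discard the null set $Z$ and the complement of $\mathcal{R}$.
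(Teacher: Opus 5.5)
Your reduction in the first paragraph is fine, but the argument has a gap: you never show that $\cG_{<0}$ is countable, and you use this twice.

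First, the negation of ``no accumulation point charges $\cG_{<0}$'' is that $\widetilde\nu(\cG_{<0})>0$ for some accumulation point $\eta$. This yields a single $\mu\in\cG_{<0}$ with $\widetilde\nu(\{\mu\})>0$ only if $\cG_{<0}$ is countable, and you need such an atom to get $\eta(D_\mu)>0$ and the frequent visits to $U$. Symmetrically, your closing step (``otherwise every accumulation point gives zero weight to $\cG_{<0}$'') passes from zero weight on each singleton to zero weight on the whole set.

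Second, $U$, $D_\mu$ and $\mathcal B_\mu$ depend on $\mu$. So your exceptional set is really a family $Z_\mu$, $\mu\in\cG_{<0}$, and you need $\Leb\bigl(\bigcup_\mu Z_\mu\bigr)=0$.

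The fix is one line with your own ingredients, and it is exactly what the paper does. You already assert $\Leb(B(\mu))>0$ for each $\mu\in\cG_{<0}$. Basins of distinct measures are disjoint, so $\cG_{<0}$ is at most countable.

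With that added, your route is the paper's, except that the paper imports the step you flag as the main obstacle rather than proving it. It cites Proposition~6.3 of \cite{CYZ2020}: for $\nu\in\cG_{<0}$, the set $E_\nu:=\{z:\supp(\nu)\subset\omega(z),\ z\notin B(\nu)\}$ is Lebesgue-null. The paper takes the countable union $E_-$ of these sets and applies the result to any $\eta_e\in\cG_{<0}$ with $\supp(\eta_e)\subset\supp(\eta)\subset\omega(z)$, which holds for $\widetilde\nu$-a.e.\ $e$. This topological hypothesis is weaker than your positive-upper-frequency one, and it removes the need for an atom altogether.

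Your density sketch is essentially the proof of that proposition. If you carry it out yourself, two points need care:
\begin{enumerate}
\item Take $U$ to be a small neighborhood of a point $y_0$ whose local strong unstable plaque meets the Pesin block in positive leaf measure. Mere closeness to the block does not give a uniform lower bound on the leaf measure of $F^n(D)\cap\mathcal B_\mu$.
\item Handle Lebesgue density along $u$-disks whose preimages under $F^n$ are non-round when $\dim E^u\geq 2$.
\end{enumerate}
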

\begin{proof}
By Remarks~\ref{rmk:lambda-ph}, $\Lambda\times \Sone$ is a partially hyperbolic attracting set for $F$ and $B(m)\times \Sone$ is a subset of its basin. By Remark~\ref{rmk:acc-u-gibbs}, there is a full Lebesgue measure subset of $B(m)\times \Sone$ such that for every point in this set, every accumulation point of its empirical measures is a $u$-Gibbs measure satisfying that almost every one of its ergodic components is also a $u$-Gibbs measure.

Observe that the measures in $\cG_{<0}$ are hyperbolic. Since they are also SRB, they are physical. Thus, $\cG_{<0}$ contains at most countably many measures.

Now, given $\nu \in \cG_{<0}$, define the set $E_\nu:=\{z\in B(m)\times \Sone \mid \supp(\nu)\subset \omega(z) \text{  and }z\notin B(\nu)\}$. Observe that if $z\in B(m)\times \Sone$ then $\omega(z)\subset \Lambda\times \Sone$, so it admits the same partially hyperbolic splitting as $\Lambda \times \Sone$. By \cite[Proposition 6.3]{CYZ2020}, $\Leb(E_{\nu})=0$, and since $\cG_{<0}$ is at most countable, the union $E_{-}:= \cup_{\nu \in \cG_{<0}}E_{\nu}$ still has zero Lebesgue measure.

Fix $z=(x,t)\in B(m)\times \Sone$ outside $E_{-}$ and inside the full Lebesgue measure given by Remark~\ref{rmk:acc-u-gibbs}. Let $\eta \in \acc(\emp_n(z))$ and write its ergodic decomposition as $\eta = \int \eta_e \, d\tilde{\nu}(e)$, with $\tilde{\nu}$-a.e. $e$ an ergodic $u$-Gibbs measure. By Lemma~\ref{lem:base-marginal}, $\pi_\ast \eta =m$, so $m=\int \pi_\ast \eta_e \, d\tilde{\nu}(e)$ and by ergodicity of $m$, $m=\pi_\ast \eta_e$ for $\tilde{\nu}$-a.e. $e$.

First, let us assume that there exists $\eta \in \acc(\emp_n(z))$ which is not supported in $\cG_{\geq 0}$, i.e., such that $\tilde{\nu}(\{e\mid \lambda^c(\eta_e)<0\})>0$. Thus, there exists an ergodic $u$-Gibbs measure $\eta_e$ such that $\lambda^c(\eta_e)<0$ and $\pi_\ast \eta_e=m$. Hence, $\eta_e \in \cG_{<0}$. Moreover, we have $\supp(\eta_e)\subset \supp(\eta)$ for $\tilde{\nu}$-a.e. $e$, so we can assume that the chosen ergodic $u$-Gibbs measure with positive center exponent satisfies this too. Since $\eta \in \acc(\emp_n(x,t))$, $\supp(\eta)\subset \omega(z)$. Since we chose $z\notin E_{-}$, since $\eta_e \in \cG_{<0}$ and since $\supp(\eta_e)\subset \omega(z)$ we must have, by the definition of $E_{-}$, that $z \in B(\eta_e)$. This proves $\emp_n (z)\wto \eta_e$.

Suppose now that the first alternative does not occur. In particular, there is no $\eta \in \acc(\emp_n(z))$ such that its ergodic decomposition satisfies $\tilde{\nu}(\{e\mid \lambda^c(\eta_e)<0\})>0$. Therefore, for every $\eta \in \acc(\emp_n(z))$ we have $\lambda^c(\eta_e)\geq 0$ for $\tilde{\nu}$-a.e. $e$, then since $\tilde{\nu}$-a.e. $e$ is an ergodic $u$-Gibbs measure with $\pi_\ast \eta_e=m$, we have $\eta_e\in \cG_{\geq 0}$ for $\tilde{\nu}$-a.e. $e$. This finishes the proof.
\end{proof}

\begin{remark}\label{rmk:caso-leq-1}
    Suppose now that $\#\cG_{\geq0}\leq1$ and set $K:=\overline{\cG_{<0}}\cup\cG_{\geq0}$. Proposition~\ref{prop:possible-accum} implies that, for Lebesgue-a.e. $(x,t)\in B(m)\times\Sone$, either $\emp_n(x,t)\wto\mu\in\cG_{<0}$, or every accumulation measure has ergodic decomposition supported on $\cG_{\geq0}$. Since $\#\cG_{\geq0}\leq1$, in the latter case every accumulation measure belongs to $\cG_{\geq0}$. Hence $\acc\bigl(\emp_n(x,t)\bigr)\subset K$ for Lebesgue-a.e. $(x,t)\in B(m)\times\Sone$. Since Corollary~\ref{cor:linear-covering} implies $N_{W_1}\bigl(\overline{\cG_{<0}},\eps\bigr)=O(\eps^{-1})$, and since $\cG_{\geq0}$ has at most one element, we have that $N_{W_1}(K,\eps)=O(\eps^{-1})$.
    
    Thus, to prove Theorem~\ref{thm:emergencia-ordem-pol}, we only need to deal with the case $\#\cG_{\geq0}\geq2$.
\end{remark}

\begin{lemma}\label{lemma:accumulacao no compacto}
    Assume that $\#\cG_{\geq0}\geq2$. There exists a compact set $K\subset \Prob(N\times\Sone)$ such that $N_{W_1}(K,\eps)=O(\eps^{-1})$ and, for Lebesgue-a.e. $(x,t)\in B(m)\times \Sone$, $\acc\bigl(\emp_n(x,t)\bigr)_{n\geq1}\subset K$.
\end{lemma}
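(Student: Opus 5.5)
The natural candidate is
\[
K:=\overline{\cG_{<0}}\cup \cG_{\geq 0}\cup \mathcal{S},\qquad \mathcal{S}:=\bigcup_{\mu^-,\mu^+}[\mu^-,\mu^+],
\]
where the union in $\mathcal{S}$ runs over the pairs $\mu^-,\mu^+$ that actually arise in Lemma~\ref{lem:empirical-segment}. However, controlling arbitrary segments between points of $\cG_{\geq0}$ is hopeless, so the plan is to replace $\mathcal S$ by $\Gamma(\widehat I)$. Concretely, I set
\[
K:=\overline{\cG_{<0}}\cup\Gamma(\widehat I).
\]
By Lemma~\ref{lemma:Gamma-cont} the set $\Gamma(\widehat I)$ is compact, so $K$ is compact. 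Covering numbers are subadditive under finite unions. Corollary~\ref{cor:linear-covering} gives $N_{W_1}(\overline{\cG_{<0}},\eps)\le N_{W_1}(\cG_{<0},\eps/2)\le 1+2L/\eps$, and Theorem~\ref{thm:Gamma-covering} gives $N_{W_1}(\Gamma(\widehat I),\eps)\le 1+L/\eps$. Together these yield $N_{W_1}(K,\eps)=O(\eps^{-1})$.

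It remains to check the accumulation sets. By Proposition~\ref{prop:possible-accum}, for Lebesgue-a.e.\ $(x,t)\in B(m)\times\Sone$ one of two things happens. Either $\emp_n(x,t)\wto\mu\in\cG_{<0}\subset K$, and we are done. Or every accumulation point has ergodic decomposition supported on $\cG_{\geq0}$, in which case Lemma~\ref{lem:empirical-segment} gives $\acc(\emp_n(x,t))\subset[\mu^-,\mu^+]$.

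The key point is to show that this segment lies in $\Gamma(\widehat I)$. Examining the proof of Lemma~\ref{lem:empirical-segment}, either $\mu^-=\mu^+\in\cG_{\geq0}=\Gamma(S)$, or $\mu^\pm$ correspond to the endpoints of a connected component $J$ of $\widehat I_x\setminus S_x$. Here the construction uses the map $\iota_x$ built from the transitive point $x$ rather than $x_0$. On such a component, $\Gamma$ by definition parametrizes exactly the segment $[\mu_J^-,\mu_J^+]$.

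The main obstacle is this change of base point, since $\Gamma$ was built from $\iota=\iota_{x_0}$. I would show that adjacency in $\cG_{\geq0}$ does not depend on the transitive point: $\mu^-,\mu^+$ are \emph{adjacent} if no third $\nu\in\cG_{\geq0}$ has its graph point between theirs in the relevant fiber interval. The interlacing lemmas make this independent of the base point. By Lemma~\ref{lem:su-graphs-do-not-intersect} and the orientation-preserving $su$-holonomies together with $F$, the fiber over $x$ is carried homeomorphically onto the fiber over $x_0$. This map sends each graph $\Psi_x$ to $\Psi_{x_0}$ and preserves cyclic order, so the relative cyclic order of the graphs is the same at $x$ and at $x_0$ (argue as in Lemma~\ref{lem:su-graphs-do-not-intersect}). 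Hence a complementary gap at $x$ with endpoints $\mu^\pm$ is also a complementary gap of $S$ at $x_0$ with the same endpoint measures, possibly the gap adjacent to $\mu^0$, which is handled by cutting at $a_1\sim a_2$. Alternatively, I would simply choose $x_0$ in Lemma~\ref{lem:empirical-segment} from the start by working through the stable-manifold reduction. In either case $[\mu^-,\mu^+]=\Gamma(\overline J)\subset K$, which proves the lemma.
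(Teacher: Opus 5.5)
Your proposal is essentially the paper's proof: the same compact set $K=\overline{\cG_{<0}}\cup\Gamma(\widehat I)$, the same covering bounds from Corollary~\ref{cor:linear-covering} and Theorem~\ref{thm:Gamma-covering}, and the same case split via Proposition~\ref{prop:possible-accum} and Lemma~\ref{lem:empirical-segment}. The paper simply asserts that $\mu^\pm$ are endpoints of a component of $\widehat I\setminus S$, without addressing the change of base point from $x$ to $x_0$ that you correctly flag. Note, however, that your holonomy map carries the gap of $\Phi^0_x$ containing $t$ to some gap $(a_k,a_{k+1})$ of $\Phi^0_{x_0}$, not necessarily to $(a_1,a_2)$. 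To close the argument you therefore also need that the relative order of the graphs is the same in every gap of $\Phi^0$. This is exactly the $\chi_{r,s}$ argument in the proof of Proposition~\ref{prop:cyclic-length}, extended from $m$-a.e.\ fiber to every fiber by continuity and Lemma~\ref{lem:su-graphs-do-not-intersect}. Your alternative of working over $x_0$ from the start is not available, since Lemma~\ref{lem:empirical-segment} must use the fiber over the base point of the given orbit.
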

\begin{proof}
Take $(x,t)\in B(m)\times\Sone$ in the full Lebesgue measure set given by Proposition~\ref{prop:possible-accum}. We distinguish the two alternatives of that proposition.

Suppose first that $\emp_n(x,t)\wto\mu$ for some $\mu\in\cG_{<0}$. Then $\acc\bigl(\emp_n(x,t)\bigr)=\{\mu\}\subset\cG_{<0}$. If, however, every accumulation measure of
$\bigl(\emp_n(x,t)\bigr)_{n\geq1}$ has ergodic decomposition
supported on $\cG_{\geq0}$, then by
Lemma~\ref{lem:empirical-segment} there exist $\mu^-,\mu^+\in\cG_{\geq0}$ such that $\acc\bigl(\emp_n(x,t)\bigr) \subset [\mu^-,\mu^+]$.

If $\mu^-=\mu^+$, then $\acc\bigl(\emp_n(x,t)\bigr)\subset\{\mu^-\}\subset\cG_{\geq0}\subset\Gamma(\widehat I)$. Otherwise, by the construction of $\mu^-$ and $\mu^+$ in the proof of Lemma~\ref{lem:empirical-segment}, they correspond to the endpoints of a connected component $J$ of $\widehat I\setminus S$. By the definition of $\Gamma$ on $J$, we have that $[\mu^-,\mu^+]=\Gamma(\overline J)\subset\Gamma(\widehat I)$. Then, in this case, $\acc\bigl(\emp_n(x,t)\bigr)\subset \Gamma(\widehat I)$. 

Let $K_-:=\overline{\cG_{<0}}$. Since $N\times\Sone$ is compact, $\Prob(N\times\Sone)$ is compact for the $W_1$ topology, and so $K_-$ is itself compact. Moreover, since $\cG_{<0}\subset \cM_{m}^{\mathrm{erg}}(F)$ there exists, by Corollary~\ref{cor:linear-covering}, $L_->0$ such that $N_{W_1}(\cG_{<0},\eps) \leq 1+\frac{L_-}{\eps}$. Since an $\eps/2$-dense set of $\cG_{<0}$ gives an $\eps$-dense set of $\overline{\cG_{<0}}$, 
$$
N_{W_1}(K_-,\eps)
\leq
N_{W_1}\left(\cG_{<0},\frac{\eps}{2}\right)
\leq
1+\frac{2L_-}{\eps},
$$
so $N_{W_1}(K_-,\eps)=O(\eps^{-1})$.

Let $K_+:=\Gamma(\widehat I)$. By continuity of $\Gamma$, the set $K_+$ is compact, and by
Theorem~\ref{thm:Gamma-covering} there exists $L_+>0$ such that
$$
N_{W_1}(K_+,\eps)
\leq
1+\frac{L_+}{\eps}.
$$

Setting $K:=K_-\cup K_+$, we get a compact set $K$ such that for Lebesgue-almost every $(x,t)\in B(m)\times\Sone$ we have
$\acc\bigl(\emp_n(x,t)\bigr)_{n\geq1}\subset K$, and
$$
N_{W_1}(K,\eps)
\leq
N_{W_1}(K_-,\eps)+N_{W_1}(K_+,\eps)
=
O(\eps^{-1}).
$$
This concludes the proof.
\end{proof}

\begin{proof}[Proof of Theorem~\ref{thm:emergencia-ordem-pol}]
Recall that the basins of the finitely many ergodic SRB measures
$$
m_{f,1},\ldots,m_{f,\ell}
$$
cover a full Lebesgue measure subset of $N$.

For each $i=1,\ldots,\ell$, Remark~\ref{rmk:caso-leq-1} in the case
$\#\cG_{\geq0}\leq1$, together with
Lemma~\ref{lemma:accumulacao no compacto} in the case
$\#\cG_{\geq0}\geq2$, gives a compact set $K_i\subset\Prob(N\times\Sone)$ such that, for Lebesgue-a.e. $(x,t)\in B(m_{f,i})\times\Sone$, $\acc\bigl(\emp_n(x,t)\bigr)_{n\geq1}\subset K_i$, and with $N_{W_1}(K_i,\eps)=O(\eps^{-1})$.

Define the compact set $K:=\bigcup_{i=1}^{\ell}K_i$. We have
$$
N_{W_1}(K,\eps)
\leq
\sum_{i=1}^{\ell}N_{W_1}(K_i,\eps)
=
O(\eps^{-1}),
$$
and since the basins $B(m_{f,i})$ cover Lebesgue almost every
point of $N$, then for Lebesgue-a.e. $(x,t)\in N\times\Sone$, $\acc\bigl(\emp_n(x,t)\bigr)_{n\geq1}\subset K$. By Proposition~\ref{prop:criterium-low-emerg}, we conclude that $\mathcal E_{\Leb}(F)(\eps) \leq N_{W_1}(K,\eps)=O(\eps^{-1})$, and this proves the theorem.
\end{proof}

In particular, Theorem~\ref{thm:emergencia-ordem-pol} implies that $F$ has low emergence, proving Theorem~\ref{thm:low-emergence}.
\color{black}

\bibliographystyle{alpha}
\bibliography{skew_products}

\end{document}